\DeclareMathAlphabet{\mathpzc}{OT1}{pzc}{m}{it}
\newtheorem*{definition*}{Definition}
\newtheorem{definition}{Definition}[section]
\newtheorem{theorem}[definition]{Theorem}
\newtheorem{proposition}[definition]{Proposition}
\newtheorem{lemma}[definition]{Lemma}
\newtheorem{corollary}[definition]{Corollary}
\theoremstyle{definition}
\newtheorem{remark}[definition]{Remark}
\newtheorem{example}[definition]{Example}
\newcommand{\CC}{{\mathbb C}}
\newcommand{\NN}{{\mathbb N}}
\newcommand{\RR}{{\mathbb R}}
\newcommand{\ZZ}{{\mathbb Z}}
\newcommand{\Cc}{{\mathcal C}}
\newcommand{\tS}{{\widetilde{S}}}
\newcommand{\tg}{{\tilde{g}}}
\newcommand{\supp}{{\operatorname{supp}}}			
\renewcommand{\subset}{\subseteq}
\begin{document}
\title{Linear repetitivity beyond abelian groups}
\author{Siegfried Beckus, Tobias Hartnick, Felix Pogorzelski}

\address{Department of Mathematics\\
University of Potsdam\\
Potsdam, Germany}
\email{beckus@uni-potsdam.de}

\address{Institute of Algebra and Geometry \\
KIT, Karlsruhe, Germany}
\email{tobias.hartnick@kit.edu}

\address{Department of Mathematics and Computer Science\\
University of Leipzig, Leipzig, Germany}
\email{felix.pogorzelski@math.uni-leipzig.de}

\begin{abstract} We show that linearly repetitive weighted Delone sets in groups of polynomial growth have a uniquely ergodic hull. This result applies in particular to the linearly repetitive weighted Delone sets in homogeneous Lie groups constructed in the companion paper \cite{BHP21-prim} using symbolic substitution methods. More generally, using the quasi-tiling method of Ornstein-Weiss, we establish unique ergodicity of hulls of weighted Delone sets in amenable unimodular lcsc groups under a new repetitivity condition which we call tempered repetitivity. For this purpose, we establish a general sub-additive convergence theorem, which also has applications concerning the existence of Banach densities and uniform approximation of the spectral distribution function of finite hopping range operators on Cayley graphs. 
\end{abstract}


\maketitle

\section{Introduction} \label{sec:intro}

The theory of aperiodic order is concerned with (generalizations of) Delone sets displaying some long range order while at the same time being far from periodic. Until recently, the theory has mostly been studied in Euclidean space or abelian locally compact groups - we refer the reader to \cite{BaakeGrimm13} for an extensive bibliography in the abelian setting. With any (weighted) Delone set in an abelian locally compact group one can associate a topological dynamical system, the so-called hull system of the Delone set, and these systems play a central role in the dynamical approach to the study of aperiodic order. A crucial question in this context is how properties of the dynamical system are reflected in the structure of the underlying Delone set and vice versa. For example, it is a famous result of Lagarias and Pleasants in \cite{LaPl03} that the hull of a linearly repetitive Delone set in $\RR^n$ is uniquely ergodic.\medskip

Recently, the scope of the theory of aperiodic order was extended to the realm of arbitrary (i.e.\@ not necessarily abelian) locally compact groups and, even more generally, proper homogeneous metric spaces, cf.\@
\cite{BjHa18,BjHaPo16,BjHaPoII, BjHaPo17} in collaboration with Bj\"orklund.
Further recent developments in this area can be found in \cite{BjHa3,BjHa2,FishExtensionsOfSchreiber,BjHaSt,BP18,BjHa2, Machado1, CordesHTonic, Hrushovski, Machado3, Machado2}. A particular focus has been on (aperiodic) Delone sets in locally compact second countable (lcsc) groups, and more specifically in constructing interesting examples of such sets. 
While the original focus of this program was on Delone sets arising from a non-abelian version of Meyer's cut-and-project construction, our most recent work \cite{BHP21-prim} introduces symbolic substitution techniques to a non-abelian setting. Using these techniques we are able to construct the first examples of aperiodic linearly repetitive Delone sets in non-abelian Lie groups. By analogy with the Lagarias-Pleasants theorem mentioned above, it is then natural to ask whether the hull dynamical systems of these Delone sets are uniquely ergodic. We will see that the answer to this question is positive for all examples provided in \cite{BHP21-prim}, but the proof of this fact will depend on a very peculiar property of our examples, namely exact polynomial growth of the ambient Lie group.
Our construction in the companion paper \cite{BHP21-prim} produces linearly repetitive Delone sets inside a certain class of Lie groups called \emph{homogeneous Lie groups} (see \cite{FoSte82} or \cite[Chapter~3]{FR16} for a more modern treatment). Lie groups from this class turn out to be nilpotent (hence in particular amenable) and admit a canonical quasi-isometry class of left-invariant metrics called \emph{homogeneous metrics}, which induce the given topology.

If $G$ is a homogeneous Lie group with homogeneous metric $d$, then a subset $\Lambda \subset G$ is called a \emph{Delone set} if there exist $R > r > 0$ such that $d(x,y) \geq r$ for all distinct $x,y \in \Lambda$ and such that every $x \in G$ is at distance at most $R$ from $\Lambda$. It is called \emph{linearly repetitive} if there exists a constant $c_\Lambda\geq 1$ such that
every pattern of radius $\rho$ of $\Lambda$ occurs in every ball of radius $c_\Lambda\rho$ (this is made precise in Definition~\ref{DefLinRep} below). By definition, the \emph{hull} of a linearly repetitive Delone set is its orbit closure with respect to the Chabauty--Fell topology on the space of closed subsets of $G$. We then have the following generalization of the Lagarias-Pleasents theorem, which we expect to draw interesting connections to questions of spectral convergence via dynamical systems, cf.\@ \cite{BBdN18,BP18}.

\begin{theorem}[Unique ergodicity from linear repetitivity, homogeneous case]\label{Intro1}
	Let $G$ be a homogeneous Lie group with a homogeneous metric $d$.
	Then the hull of every Delone set in $G$ that is linearly repetitive {with respect to $d$} is minimal and uniquely ergodic. 
\end{theorem}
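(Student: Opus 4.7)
The plan is to deduce the theorem from a sub-additive convergence result for ball sequences in $G$, whose hypotheses are guaranteed by the exact polynomial growth of a homogeneous Lie group: the existence of dilations forces $|B_R|=cR^Q$ for the homogeneous dimension $Q$, and balls then form a tempered F\o lner sequence. Both claims will be reduced to uniform statements about pattern counts along such balls.

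Minimality is essentially immediate from linear repetitivity. Given $\Lambda_1,\Lambda_2$ in the hull of $\Lambda$, every pattern of $\Lambda_2$ of radius $\rho$ is a Chabauty--Fell limit of patterns of $\Lambda$; by linear repetitivity, each such pattern occurs in every ball of radius $c_\Lambda\rho$ in $\Lambda_1$. A diagonal argument then produces $g_n\in G$ with $g_n\Lambda_1\to\Lambda_2$, so every orbit is dense in $\mathrm{hull}(\Lambda)$.

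For unique ergodicity, the strategy is to prove that for every finite pattern $P$ of $\Lambda$, the empirical frequency
$$\mathrm{freq}_P(\Lambda',x,R) := \frac{\#\{g\in G : gP \text{ is a pattern of } \Lambda'\cap B_R(x)\}}{|B_R|}$$
converges as $R\to\infty$ to a limit $\nu(P)$ that is independent of $\Lambda'\in\mathrm{hull}(\Lambda)$ and of the base point $x\in G$, and that the convergence is uniform in $(\Lambda',x)$. Since indicator functions of the cylinder sets built from such patterns generate a dense subalgebra of $C(\mathrm{hull}(\Lambda))$, uniform existence of pattern frequencies forces every $G$-invariant probability measure on the hull to integrate these indicators to $\nu(P)$, and hence to coincide.

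This uniform convergence is obtained by applying the general sub-additive convergence theorem announced in the abstract to the pattern counting functionals, which are almost sub-additive on the balls $B_R(x)$ up to a boundary error bounded by $|\partial_\rho B_R|$. Polynomial growth gives $|\partial_\rho B_R|/|B_R|\to 0$ uniformly in $x$, while linear repetitivity supplies matching upper and lower bounds on the pattern densities at every scale $R\ge c_\Lambda\rho$, so that the one-sided convergence provided by sub-additivity is promoted to two-sided uniform convergence with a common positive limit $\nu(P)$. The main obstacle is establishing this almost sub-additivity together with uniformity across the hull in the non-abelian setting, where balls do not tile by smaller balls; one must instead rely on a Vitali--Besicovitch type packing argument adapted to the homogeneous metric, for which the rigid scaling provided by the dilations of $G$ is essential.
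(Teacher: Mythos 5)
Your overall route matches the paper's: show that the homogeneous metric has exact polynomial growth, conclude that metric balls form a strong F\o lner exhaustion sequence with $|\partial_\rho B_R|/|B_R|\to 0$, translate linear repetitivity into a F\o lner‑sequence repetitivity condition (tempered repetitivity), and apply the general sub-additive convergence theorem to weight functions on the hull to obtain uniform pattern frequencies and hence unique ergodicity. The minimality argument via a diagonalization on Chabauty--Fell limits is also the same idea the paper uses (Proposition~\ref{prop:minmin}).

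Where you diverge, and where the proposal becomes shaky, is in the mechanism you propose for the convergence theorem itself. You write that in the non-abelian setting ``one must instead rely on a Vitali--Besicovitch type packing argument adapted to the homogeneous metric, for which the rigid scaling provided by the dilations of $G$ is essential.'' The paper does something different and strictly more general: the convergence theorem (Theorem~\ref{thm:abstr}) is proved using the Ornstein--Weiss $\varepsilon$-quasi-tiling machinery (Definition~\ref{defi:qt}, Theorem~\ref{thm:tiling}, and Lemmas~\ref{lemma:tilingswf} and~\ref{lemma:mainaux}), which requires only amenability and unimodularity and makes no use of dilations or any packing/covering property of balls. The prototiles in a quasi-tiling are finitely many sets drawn from a F\o lner sequence -- not scaled balls -- and they are $\varepsilon$-disjoint rather than disjoint. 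This is essential: the resulting Theorem~\ref{Intro2} applies to \emph{any} strong F\o lner exhaustion sequence in an amenable unimodular lcsc group, and then the homogeneous case falls out as a special instance via Proposition~\ref{prop:Temp-LR} and Proposition~\ref{HomogeneousExact}. By contrast, a Vitali--Besicovitch argument would tie the result to groups with dilation structure; moreover, Besicovitch-type covering properties are delicate in sub-Riemannian/homogeneous geometries (they fail for some natural metrics, e.g., the Carnot--Carath\'eodory metric on Heisenberg), so even within your intended scope the step you flag as the ``main obstacle'' is not a routine adaptation. If you take Theorem~\ref{thm:abstr} as a black box -- as the paper does when deriving Theorem~\ref{Intro1} from Theorem~\ref{Intro2} -- no packing argument is needed at all; the work reduces to verifying that the chosen weight functions satisfy Axioms (W1)--(W5), and the paper does this for $w_f(T,\Pi)=\int_T f(g.\Pi)\,dm_G(g)$ with $f\in C(\mathcal H_\Lambda)$ rather than for raw pattern-counting functionals, sidestepping the FLC issues that arise in the more general weighted / almost-repetitive setting.
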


We emphasize that there are plenty of examples to which Theorem~\ref{Intro1} applies. Indeed, in \cite{BHP21-prim} we were able to construct non-periodic repetitive Delone sets,  which are linearly repetitive with respect to a homogeneous metric on the ambient group, in 133 of the 149 families of indecomposible $1$-connected nilpotent $7$-dimensional real Lie groups classified in \cite{Gong98}. In fact these examples were a key motivation for us to establish Theorem~\ref{Intro1}.

We will actually prove a slightly stronger result than Theorem~\ref{Intro1}, which applies to all \emph{weighted} Delone sets which are \emph{almost linearly repetitive} (this is made precise below). Minimality of the hull holds in an even wider context, see Proposition~\ref{prop:minmin} below, and can be established by adapting ideas from \cite{FrRi14} to the setting of weighted Delone sets. The main contribution of the present article lies in proving unique ergodicity, and it is in this part of the proof that we use the assumption that $G$ is a homogeneous Lie group with homogeneous metric $d$. As explained in \cite[Proposition~3.36]{BHP21-prim}, these assumptions imply that the pair $(G,d)$ has \emph{exact polynomial growth} in the sense of the following definition (Definition~\ref{defi:exactgrowth} below):

\begin{definition*}
Let $G$ be an lcsc group with left-Haar measure $m_G$ and let $d$ be an adapted (i.e. left-invariant, proper and locally bounded) metric on $G$. We say that $G$ has \emph{exact polynomial growth with respect to $d$} if there are constants $C > 0$ and $q \geq 0$ such that 
	\[
	\lim_{t \to \infty} \frac{m_G(B_t)}{C t^{q}} = 1,
	\]
	where $B_t$ denotes the open ball of radius $t$ around the identity of $G$.
\end{definition*}

Every compactly-generated lcsc group which has exact polynomial growth with respect to some metric $d$ as above obviously has polynomial growth in the sense of geometric group theory.
Conversely, by a celebrated results of Breuillard \cite{Bre14}, every compactly-generated lcsc group of polynomial growth admits an adapted metric of \emph{exact} polynomial growth, see Theorem~\ref{Bre2}. If $G$ is moreover a connected Lie group, then this metric can be chosen to be continuous (in fact, Riemannian and in particular real-analytic), but it seems to be an open problem whether this is the case in general. It turns out that exact polynomial growth is actually sufficient to establish Theorem~\ref{Intro1}:

\begin{theorem}[Unique ergodicity from linear repetitivity, exact case]\label{Intro1a}
	If an lcsc group $G$ has exact polynomial growth with respect to an adapted metric $d$,
	then the hull of every (weighted) Delone set in $G$ that is (almost) linearly repetitive {with respect to $d$} is minimal and uniquely ergodic. 
\end{theorem}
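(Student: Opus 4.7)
The plan is to reduce Theorem \ref{Intro1a} to the uniform existence of patch frequencies and then to exploit exact polynomial growth to establish this convergence via a cover-and-pack (sub-additive) argument. Minimality of the hull is handled by the general Proposition \ref{prop:minmin} and is not the main concern.

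Let $\Omega$ denote the hull of the weighted Delone set $\Lambda$ with its natural $G$-action. By the standard correspondence between pattern statistics and invariant measures on $\Omega$, unique ergodicity is equivalent to showing that, for every admissible patch $P$ of some radius $\rho$, there is a constant $\nu(P)\geq 0$ such that the number of $P$-occurrences in $\oB_R(x)\cap\supp(\omega)$, divided by $m_G(\oB_R)$, tends to $\nu(P)$ uniformly in $x\in G$ and $\omega\in\Omega$. The key geometric ingredient is that exact polynomial growth makes $(B_R)_{R>0}$ a strong left-Følner net: if $K\subset B_D$ is compact, then $m_G(KB_R\setminus B_R)\leq m_G(B_{R+D})-m_G(B_R)$, and this quantity divided by $m_G(B_R)$ tends to $0$ by the volume rigidity $m_G(B_{R+D})/m_G(B_R)\to 1$. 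Balls may therefore serve as averaging sets without invoking the Ornstein--Weiss quasi-tiling machinery needed elsewhere in the paper.

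Linear repetitivity immediately supplies positive upper and lower bounds for the normalised patch counts, uniform in $x$, $\omega$, $R$, since every patch of radius $\rho$ occurs in every ball of radius $c_\Lambda\rho$. To upgrade boundedness to uniform convergence, I would run a Boshernitzan-style cover-and-pack argument: at scales $S\ll R$, nearly disjoint balls $B_S(y_i)$ are packed into $\oB_R(x)$ along a $c_\Lambda\rho$-separated net of centers; on each piece, (almost) linear repetitivity controls the $P$-frequency to within a multiplicative factor close to $1$ of some asymptotic value $\nu_S(P)$; and exact polynomial growth forces the ratio of packed volume to $m_G(\oB_R)$ to tend to $1$. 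A Cauchy argument as $S\to\infty$ then produces the common limit $\nu(P)$, which abstractly is an instance of the sub-additive convergence theorem advertised in the abstract.

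The main obstacle is establishing uniformity of this convergence in the pair $(x,\omega)$ in the non-abelian setting, where left and right translations fail to commute. The approach is to phrase the cover-and-pack argument entirely in terms of left translations, matching the left $G$-action on $\Omega$, and to rely on the volume rigidity $m_G(B_{R+D})/m_G(B_R)\to 1$ to absorb any thickening or shrinking caused by translating patches by group elements. Once uniform patch frequencies are established, unique ergodicity follows at once, and combined with minimality from Proposition \ref{prop:minmin} this completes the proof.
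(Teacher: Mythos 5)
Your reduction to uniform patch frequencies and the observation that exact polynomial growth makes balls a strong F\o lner exhaustion sequence are both correct and consistent with the paper (cf.\ Proposition~\ref{prop:folner!}). However, the cover-and-pack step has a genuine gap, and it is precisely the gap that the paper's Ornstein--Weiss machinery is designed to close.

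You assert that ``nearly disjoint balls $B_S(y_i)$ are packed into $\overline{B}_R(x)$ along a $c_\Lambda\rho$-separated net of centers'' and that ``exact polynomial growth forces the ratio of packed volume to $m_G(\overline{B}_R)$ to tend to $1$.'' This last claim is the crux, and it is not justified. Exact polynomial growth controls the \emph{annulus} ratio $m_G(B_{R+D})/m_G(B_R)\to 1$, but it says nothing about packing density by balls of a single fixed radius $S$. A maximal $2S$-separated set of centers in $B_R$ produces disjoint balls of radius $S$ whose union covers only roughly $2^{-q}$ of $m_G(B_R)$ (with $q$ the growth exponent), not a fraction tending to $1$; relaxing to ``nearly disjoint'' does not change this. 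Unlike boxes in $\RR^n$, metric balls in a group of polynomial growth do not tile, and there is no analogue of the Lagarias--Pleasants/Damanik--Lenz box decomposition built from balls alone. This is exactly why the paper replaces the Euclidean box argument by $\varepsilon$-quasi-tilings (Theorem~\ref{thm:tiling}, Lemma~\ref{lemma:tilingswf}): Ornstein--Weiss requires a finite collection of $N(\varepsilon)\approx\log(1/\varepsilon)$ prototiles at increasing scales, arranged in $\varepsilon$-disjoint families, to achieve coverage $\geq(1-2\varepsilon)m_G(A)$. A single radius cannot do this, so your Cauchy argument as $S\to\infty$ has no base to stand on.

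The paper's actual route is: linear repetitivity with respect to $d$ $\Longleftrightarrow$ tempered repetitivity with respect to the ball sequence (Proposition~\ref{prop:Temp-LR}, using exact polynomial growth), and then tempered repetitivity $\Longrightarrow$ unique ergodicity via the abstract sub-additive convergence theorem (Theorem~\ref{thm:abstr}), whose proof does go through the Ornstein--Weiss quasi-tiling lemma. The role of exact polynomial growth is therefore not to make a naive ball-packing work, but only to turn the metric notion of linear repetitivity into the F\o lner-sequence notion of tempered repetitivity; the sub-additive machinery is still needed afterwards. If you want a ball-only argument you would need to re-derive, at least for the ball sequence, a quasi-tiling statement of the type in Theorem~\ref{thm:tiling}, and that is not shorter than citing it.
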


We establish Theorem~\ref{Intro1a} in Theorem~\ref{LRConvenient} below; note that by the previous remarks, Theorem~\ref{Intro1} is merely a special case. The assumption that $G$ has exact polynomial growth with respect to $d$ implies that metric balls around the identity form a F\o lner sequence in $G$, and even a \emph{strong F\o lner exhaustion sequence} in the sense of Definition~\ref{DefStrongFES} below. Linear repetitivity with respect to $d$ can then be reformulated in terms of this F\o lner sequence, and this leads to the notion of \emph{(almost) tempered repetitivity} of a Delone set with respect to a F\o lner sequence (see Definition~\ref{DefATR} below). Using this notion, one can reformulate Theorem~\ref{Intro1a} in terms of F\o lner sequences rather than metrics, and this metric-free formulation actually holds in much greater generality:

\begin{theorem}[Unique ergodicity from tempered repetitivity]\label{Intro2} Let $(T_m)$ be a strong F\o lner exhaustion sequence in an amenable unimodular lcsc group $G$ and let $\Lambda$ be a weighted Delone set in $G$. If $\Lambda$ is almost tempered repetitive with respect to $(T_m)$, then the hull of $\Lambda$ is minimal and uniquely ergodic.
\end{theorem}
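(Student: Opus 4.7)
Minimality is delivered by Proposition~\ref{prop:minmin}, so the substantive task is unique ergodicity. The plan is to follow the Lagarias--Pleasants blueprint: reduce unique ergodicity of the hull $\Omega$ to showing that, for $f$ ranging over a dense subspace of $C(\Omega)$, the Birkhoff averages
\[
A_m(f)(\omega) \;=\; \frac{1}{m_G(T_m)} \int_{T_m} f(g^{-1}\omega)\, dm_G(g)
\]
converge, as $m \to \infty$, to a constant that is independent of $\omega \in \Omega$, with the convergence being uniform in $\omega$. A convenient dense family consists of patch-indicator functions that detect the presence of a given patch $P$ inside a compact window $K$ around a point of $\omega$, up to a small tolerance $U$. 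For such $f$, the quantity $A_m(f)(\omega)$ is, up to a boundary effect of size $o(m_G(T_m))$ controlled by the fact that $(T_m)$ is a strong F\o lner exhaustion, essentially the normalised count of $P$-occurrences of $\omega$ in $T_m$.

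Next I would show that such patch counts $N(P,T_m,\omega)/m_G(T_m)$ converge uniformly in $\omega \in \Omega$ to a density $d(P)$ that does not depend on $\omega$. By minimality the set of patches of any $\omega$ coincides with that of $\Lambda$, so it suffices to prove uniform convergence along the $G$-orbit of $\Lambda$. Almost tempered repetitivity of $\Lambda$ provides, for each patch $P$, a quantitative lower bound on its frequency in every sufficiently large translate of $T_m$ and, more importantly, controls the deviation between counts across nested F\o lner sets up to a tempered error. This yields a near-subadditive relation for the functions $m \mapsto N(P,T_m,\omega)$, and I would combine it with the Ornstein--Weiss quasi-tiling machinery: any sufficiently large $T_m$ can be quasi-tiled, with arbitrarily small overlap and uncovered remainder, by translates of smaller $T_{m_i}$'s. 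Counting $P$-occurrences via the quasi-tiling then reduces to summing tightly controlled counts on each tile, and an appeal to the paper's promised general sub-additive convergence theorem delivers the uniform limit.

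The step I expect to be the main obstacle is assembling all the error terms into a single estimate of size $o(m_G(T_m))$ uniformly in $\omega$: one needs to simultaneously absorb the slack in ``almost'' tempered repetitivity, the overlap and remainder errors in the quasi-tiling, the window boundary effect, and the contribution of the weights. The sub-additive convergence theorem is precisely the abstract device that should handle these errors, so the crux lies in formulating the pattern-count data so that this theorem applies, and then in verifying that the resulting density is $\omega$-independent -- which again follows from minimality together with the fact that almost tempered repetitivity is a property of the hull as a whole.
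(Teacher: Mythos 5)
Your overall architecture is right: reduce unique ergodicity to uniform convergence of Birkhoff averages and feed them into the abstract sub-additive convergence theorem (Theorem~\ref{thm:abstr}), which is exactly what the paper does. But your route diverges from the paper's in a way that makes the verification harder and arguably duplicates work. The paper does not pass through patch-count functions at all: for an \emph{arbitrary} $f\in C(\mathcal{H}_\Lambda)$ it sets $w_f(T,\Pi):=\int_T f(g.\Pi)\,dm_G(g)$ and checks directly that $w_f$ is an admissible weight function. The check is almost free: axioms (W1)--(W4) hold trivially (additivity of the integral gives $J=B=I=\emptyset$, $\eta=\|f\|_\infty$, $\theta=\vartheta=0$), and (W5) follows from uniform continuity of $f$ on the compact hull together with a small boundary term controlled by the strong F\o lner property. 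Theorem~\ref{thm:abstr} then gives a uniform limit $I(f)$ independent of $\Pi$, and a one-line Fubini/invariance argument shows $I(f)=\int f\,d\nu$ for every invariant $\nu$. Your proposal instead fixes a dense family of patch-detecting functions ``up to tolerance $U$'' and argues about occurrence counts $N(P,T_m,\omega)$. This is the classical Lagarias--Pleasants/Damanik--Lenz road, but in the present generality it costs you: without FLC, ``has patch $P$'' is neither open nor closed, $\delta$-similarity is not transitive, and turning fuzzy indicators into honest almost-subadditive weight functions satisfying (W5) needs exactly the kind of bookkeeping that working with general continuous $f$ avoids. Your plan also layers two appeals to the quasi-tiling machinery — once explicitly when you tile $T_m$ by translates of smaller $T_{m_i}$'s and sum the per-tile counts, and once implicitly when you invoke Theorem~\ref{thm:abstr} — whereas the paper invokes the abstract theorem once as a black box (the tilings live inside its proof). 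Finally, the $\omega$-independence and uniformity you list as a separate step at the end are already part of the \emph{conclusion} of Theorem~\ref{thm:abstr}, not something you need to re-establish from minimality afterwards.
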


Theorem~\ref{Intro1a} is indeed a special case of Theorem~\ref{Intro2}, since linear repetitivity with respect to an adapted metric of exact polynomial growth is equivalent to tempered repetitivity with respect to the corresponding strong F\o lner exhaustion sequence of balls (see Proposition~\ref{prop:Temp-LR} below).

It seems to us that Theorem~\ref{Intro2} is the most natural generalization of the Lagarias--Pleasants theorem to the general amenable case: By Proposition~\ref{prop:strongfolnerexist} below, strong F{\o}lner exhaustion sequences exist in all unimodular, amenable lcsc groups, but it is not clear at all in which cases (beyond the case of exact polynomial growth) they can be chosen to be balls with respect to a suitable metric, hence the need for a metric free formulation. Theorem~\ref{Intro2} indicates that, in this metric-free context, \emph{tempered repetitivity} is the correct replacement for linear repetitivity, hence we would like to advertise this notion.

While Theorem~\ref{Intro2} is formulated in terms of Delone dynamical system, it is easy to deduce a corresponding unique ergodicity result in the symbolic setting. To formulate such a result, let $\mathcal A$ be a finite alphabet and $\Gamma$ be a countable amenable group. We refer to an element of $\mathcal{A}^{\Gamma}$ as a \emph{coloring} of $\Gamma$ by $\mathcal A$ and equip $\mathcal A^\Gamma$ with its natural compact metrizable topology, given by the product topology. Given a coloring $C \in \mathcal{A}^{\Gamma}$ we refer to its orbit closure in $\mathcal A^\Gamma$ as the \emph{hull} of $C$. Similarly to the case of Delone sets, one can also define the notions of a symbolically tempered repetitive coloring (with respect to a F\o lner sequence in $\Gamma$) or a symbolically linearly repetitive coloring (with respect to a left-invariant metric on $\Gamma$), see Definition~\ref{Def-SymbLR} below.
In fact, the situation is slightly simpler than in the Delone setting, since for countable groups the notions of F{\o}lner sequence and strong F{\o}lner sequence are equivalent by \cite[Lemma~2.7~(d)]{PS16}. In any case, the following result can then be deduced easily from Theorem~\ref{Intro2}, using the fact that colorings of $\Gamma$ can be identified with certain weighted Delone sets in $\Gamma$ (see Section~\ref{sec:symbolic} for details):

\begin{theorem}[Unique ergodicity in the symbolic setting]
	\label{thm:LRsymbolic}
		Let $\Gamma$ be a countable amenable group and let $\mathcal{A}$ be a finite set. If $\mathcal{C} \in \mathcal{A}^{\Gamma}$ is symbolically tempered repetitive with respect to a F{\o}lner exhaustion sequence, then the hull of $\mathcal{C}$ is minimal and uniquely ergodic. 
		In particular, if $\Gamma$ has exact polynomial growth with respect to an adapted metric $d_{\Gamma}$ and if  $\mathcal{C}$ is symbolically linearly repetitive with respect to $d_{\Gamma}$, then the hull of $\mathcal{C}$ is minimal and uniquely ergodic.  
\end{theorem}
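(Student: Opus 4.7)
The plan is to deduce Theorem~\ref{thm:LRsymbolic} directly from Theorem~\ref{Intro2} via the identification of colorings with weighted Delone sets that is set up in Section~\ref{sec:symbolic}. To a coloring $\mathcal{C} \in \mathcal{A}^\Gamma$ one associates the weighted Delone set $\Lambda_\mathcal{C}$ whose underlying point set is all of $\Gamma$ (viewed as a discrete lcsc group, where $\Gamma$ itself is trivially uniformly discrete and relatively dense in itself) and whose weight at $\gamma \in \Gamma$ is the symbol $\mathcal{C}(\gamma) \in \mathcal{A}$, the finite alphabet being embedded in whatever weight space the paper uses. Under this correspondence, the product topology on $\mathcal{A}^\Gamma$ restricted to the orbit of $\mathcal{C}$ matches the Chabauty--Fell topology on weighted Delone sets restricted to the orbit of $\Lambda_\mathcal{C}$, and the left $\Gamma$-actions by translation are intertwined. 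In particular, the symbolic hull of $\mathcal{C}$ and the Delone hull of $\Lambda_\mathcal{C}$ are $\Gamma$-equivariantly homeomorphic.

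Under this identification, patterns of $\mathcal{C}$ over a finite set $F \subset \Gamma$ (restrictions $\mathcal{C}|_F$ modulo translation) are in natural bijection with $F$-patches of $\Lambda_\mathcal{C}$ in the sense underlying Definition~\ref{DefATR}. Consequently, symbolic tempered repetitivity of $\mathcal{C}$ with respect to a Følner exhaustion sequence $(F_n)$ in $\Gamma$ is equivalent to almost tempered repetitivity of $\Lambda_\mathcal{C}$ with respect to $(F_n)$. Since $\Gamma$ is countable, the Følner exhaustion sequence $(F_n)$ is automatically a \emph{strong} Følner exhaustion sequence by \cite[Lemma~2.7~(d)]{PS16}, so the hypotheses of Theorem~\ref{Intro2} are satisfied. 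Invoking Theorem~\ref{Intro2} gives minimality and unique ergodicity of the hull of $\Lambda_\mathcal{C}$, hence of the hull of $\mathcal{C}$, which proves the first assertion.

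For the second assertion, assume that $\Gamma$ has exact polynomial growth with respect to an adapted metric $d_\Gamma$ and that $\mathcal{C}$ is symbolically linearly repetitive with respect to $d_\Gamma$. Then the sequence of $d_\Gamma$-balls $(B_n)_{n \in \NN}$ around the identity forms a strong Følner exhaustion sequence, as discussed immediately before Theorem~\ref{Intro2}. By Proposition~\ref{prop:Temp-LR}, linear repetitivity of $\Lambda_\mathcal{C}$ with respect to $d_\Gamma$ is equivalent to tempered repetitivity of $\Lambda_\mathcal{C}$ with respect to $(B_n)$; translating back to the symbolic picture, this reduces the second assertion to the first.

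The main (and essentially only) issue is bookkeeping: one must verify that the symbolic-to-Delone dictionary faithfully transports the hull topology, the $\Gamma$-action, and the precise quantitative form of the repetitivity condition so that Theorem~\ref{Intro2} applies with no loss. Given the explicit correspondence in Section~\ref{sec:symbolic} and the equivalence in Proposition~\ref{prop:Temp-LR}, no further work is needed beyond this translation.
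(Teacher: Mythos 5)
Your proposal is correct and follows the same route as the paper: you identify colorings with weighted Delone sets on $\Gamma$ via the map $\mathcal{I}$ (the paper's Lemma~\ref{lem:equivariant} and Proposition~\ref{prop:isomorphic}), transport the symbolic tempered repetitivity hypothesis to (almost) tempered repetitivity of $\Lambda_\mathcal{C}$ (the content of Proposition~\ref{prop:shift-rep}), then apply Theorem~\ref{Intro2}; for the second assertion you pass through Proposition~\ref{prop:Temp-LR} just as the paper does via Theorem~\ref{LRConvenient} inside Theorem~\ref{thm:mainsymb}. The only cosmetic difference is that the paper wraps these steps into Theorem~\ref{thm:mainsymb} and then invokes parts (a) and (c), whereas you unfold that intermediate theorem explicitly; the technical verifications you defer (the $\Gamma$-equivariant homeomorphism of hulls, the transport of the repetitivity index, and the implication from symbolic linear repetitivity to Delone linear repetitivity) are precisely what Section~\ref{sec:symbolic} supplies.
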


Let us now discuss some of the ingredients of the proof of Theorem~\ref{Intro2}. As in the cases of  Theorems~\ref{Intro1} and~\ref{Intro1a}, the minimality result of Theorem~\ref{Intro2} can be obtained by adapting arguments from \cite{FrRi14}, whereas the unique ergodicity result requires new ideas. 
Our approach is based on the observation by Damanik and Lenz \cite{DaLe01} that in the Euclidean setting, linear repetitivity implies the validity of a uniform sub-additive convergence theorem for certain functions defined on the collection $\mathcal{B}(\RR^n)$ of all $n$-dimensional boxes in $\RR^n$, which in turn can be used to ensure unique ergodicity. This observation has been refined in many different directions, see e.g.\@ \cite{Len02b, Len03, LMV08, LSV11, BBL13, Pog13, Pog14, FrRi14, PS16}.
A  different approach is via strongly almost periodic translation bounded measures, see \cite{LLRSS}. In Theorem~\ref{thm:abstr} below we establish a general version of such a convergence theorem for a large class of amenable groups, which is general enough to allow us to deduce Theorem~\ref{Intro2} (and thereby also Theorems~\ref{Intro1},~\ref{Intro1a} and~\ref{thm:LRsymbolic}). While the precise statement of the convergence theorem is rather technical, let us highlight two of the main features of the theorem and its proof:

\begin{itemize}
\item The theorem works for general amenable unimodular lcsc groups and for a rather general class of weight functions. To achieve this, the study of $n$-dimensional boxes in $\RR^n$ is replaced by the Ornstein-Weiss machinery of $\varepsilon$-quasi-tilings \cite{OW87} and more specifically its variant from \cite{PS16}.
\item There is no requirement that the Delone set under consideration has finite local complexity. A consequence of this is that Theorem~\ref{Intro1a} also holds for \emph{almost} linearly repetitive Delone sets (and Theorem~\ref{Intro2} also holds for \emph{almost} tempered repetitive Delone sets). This is achieved using ideas from Frettl\"oh-Richard \cite{FrRi14}. Roughly speaking, to incorporate non-FLC situations, one has to identify not only patches which are equal up to a translation, but also patches which are close up to a translation. The fact that being $\delta$-close for a small $\delta>0$ is not an equivalence relation creates all kinds of technical difficulties.
\end{itemize}

\medskip

It is well-known in the abelian setting that uniform sub-additive convergence theorems have applications beyond unique ergodicity of Delone dynamical systems, and this is no different in our more general setting. We demonstrate the strength of our convergence theorem by two such applications one to Banach densities and one to the integrated densities of states (IDS). We deduce the corresponding theorems along the lines of the corresponding abelian situations.\medskip

 Upper and lower Banach densities appear as an important 
combinatorial quantity in many mathematical areas, see 
for instance a recent example \cite{DHZ19} for countable amenable groups. 
For sets with enough symmetry, the upper and the lower Banach 
density may coincide and one can just refer to
the Banach density of a set. In analogy to the abelian realm, this situation 
occurs for almost tempered repetitive weighted Delone sets. 
For the definition of the upper and lower Banach density
of a weighted Delone set we refer to Section~\ref{sec:applications}. Given a weighted Delone set $\Pi$, we denote by $\delta_\Pi$ the associated Dirac comb which is a Radon measure on the group.

\begin{corollary} \label{cor:Banachdensities}
Let $G$ be an amenable unimodular lcsc group with Haar measure $m_G$. Suppose that $\Lambda$ is a  weighted Delone set that is almost tempered repetitive with respect to a strong F{\o}lner exhaustion sequence $(T_m)$ of $G$. Then the {\em Banach density} 
	\[
	b_{\Lambda} := \lim_{m \to \infty} \frac{\delta_\Pi(T_m^{-1})}{m_G(T_m)} 
	\]
	exists uniformly in $\Pi \in \mathcal{H}_\Lambda$.
\end{corollary}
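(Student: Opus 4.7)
The approach is to derive the corollary from unique ergodicity of $\mathcal{H}_\Lambda$ (given by Theorem~\ref{Intro2}) via a standard convolution-approximation, reducing $\delta_\Pi(T_m^{-1})/m_G(T_m)$ to the limit of ergodic averages of a continuous function on the hull; this parallels the classical argument in the abelian case.

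Let $\mu$ denote the unique $G$-invariant probability measure on $\mathcal{H}_\Lambda$ provided by Theorem~\ref{Intro2}. Fix a nonnegative $\varphi \in C_c(G)$ with $\int_G \varphi\,dm_G = 1$, supported in a small symmetric neighbourhood $V$ of the identity, and define $F_\varphi\colon \mathcal{H}_\Lambda \to \RR$ by $F_\varphi(\Pi) := \int_G \varphi\,d\delta_\Pi$. This map is continuous, since convergence in $\mathcal{H}_\Lambda$ encodes vague convergence of the associated Dirac combs and the uniform Delone parameters on the hull give uniform control on the $\delta_\Pi$-mass of compact sets. By unique ergodicity on the amenable unimodular group $G$, the ergodic averages
$$\frac{1}{m_G(T_m)}\int_{T_m} F_\varphi(g\Pi)\,dm_G(g) \longrightarrow \int_{\mathcal{H}_\Lambda} F_\varphi\,d\mu$$
uniformly in $\Pi \in \mathcal{H}_\Lambda$.

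A Fubini computation together with a unimodular change of variables rewrites the left-hand side as $\frac{1}{m_G(T_m)}\int_G \varphi(h)\, \delta_\Pi(T_m^{-1} h)\,dm_G(h)$. Since $\varphi$ integrates to $1$ and is supported in $V$, this differs from $\delta_\Pi(T_m^{-1})/m_G(T_m)$ by at most $\sup_{h \in V} \delta_\Pi\bigl(T_m^{-1}h \,\triangle\, T_m^{-1}\bigr)/m_G(T_m)$, and the uniform density bound for weighted Delone sets in $\mathcal{H}_\Lambda$ dominates this by $C\cdot m_G(\partial_V T_m^{-1})/m_G(T_m)$. As $(T_m)$ is a strong F\o lner exhaustion sequence and $G$ is unimodular, this quantity tends to $0$ as $m \to \infty$, uniformly in $\Pi$. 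Combining the two steps yields uniform convergence of $\delta_\Pi(T_m^{-1})/m_G(T_m)$ to $\int F_\varphi\,d\mu$, which is $\Pi$-independent by construction (independence from the choice of $\varphi$ is then automatic).

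The main technical obstacle is making the F\o lner error uniform in $\Pi \in \mathcal{H}_\Lambda$: one must transfer the uniform Delone and weight bounds from $\Lambda$ to its entire hull, and pass from the strong F\o lner property for $(T_m)$ to the analogous estimate for $(T_m^{-1})$. A conceptually different route, advertised in the introduction, is to bypass unique ergodicity altogether and apply the abstract sub-additive convergence theorem (Theorem~\ref{thm:abstr}) directly to the essentially additive weighted counting function $A \mapsto \delta_\Pi(A)$, which should give the uniform convergence in one stroke.
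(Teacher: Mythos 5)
Your argument is correct, but it follows a genuinely different route from the paper's, which you in fact name in your closing sentence: the paper bypasses unique ergodicity entirely, verifies that $w(K,\Pi):=\delta_\Pi(K^{-1})$ satisfies Axioms~(W1)--(W5) as an admissible almost sub-additive weight function on $\mathcal{H}_\Lambda$, and then applies Theorem~\ref{thm:abstr} directly. Your route instead first establishes unique ergodicity (Theorem~\ref{Intro2}), invokes the standard fact that ergodic averages of a continuous function converge uniformly in a uniquely ergodic system over a F\o lner sequence, mollifies $1_{T_m^{-1}}$ by a bump $\varphi\in C_c(G)$ supported near the identity, and then controls the mollification error, which is supported on a right boundary $\tilde{\partial}_{V^{-1}}(T_m^{-1})$ of $T_m^{-1}$. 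Both routes ultimately reduce to the same two uniform estimates that you flag as "the main technical obstacle'' but leave unexpanded: (a) the $U$-uniform discreteness of every $\Pi=(Q,\beta)\in\mathcal{H}_\Lambda$ gives a density bound of the form $\sharp(S\cap Q)\leq m_G(\overline{V})^{-1}\bigl(m_G(S)+m_G(\tilde{\partial}_{\overline{V}}(S))\bigr)$, and (b) unimodularity identifies $m_G(\tilde{\partial}_K(T_m^{-1}))=m_G(\partial_{K^{-1}}(T_m))$, so the strong F\o lner property of $(T_m)$ gives $m_G(\tilde{\partial}_K(T_m^{-1}))/m_G(T_m)\to 0$; this is Lemma~\ref{lem:RightFolner}. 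These are exactly what the paper's checks of (W2) and (W5) supply. The trade-off: your route relies on a black-box uniform ergodic theorem for uniquely ergodic amenable systems (which would need to be cited or re-derived by the usual vague-compactness argument, or alternatively read off from Theorem~\ref{thm:abstr} applied to $w_f$ as in the proof of Theorem~\ref{Intro2}) and is closer in spirit to the classical Euclidean arguments of Lagarias--Pleasants and Damanik--Lenz, whereas the paper's route stays entirely inside the weight-function framework it has already built, avoids invoking unique ergodicity at all, and has a shorter chain of dependencies. In both cases the genuine work lies in the right-boundary estimates.
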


\medskip

The implications of our convergence theorem concerning the IDS are a bit more technical to state. We give a sample result in the setting of finitely generated amenable groups $\Gamma$ whose elements are labeled by colors taken from a finite set $\mathcal{A}$. Since each coloring $\mathcal{C} \in \mathcal{A}^{\Gamma}$ can be interpreted as a weighted Delone set in $\Gamma$, it makes sense to define a version of tempered repetitivity for a coloring. In this context, our convergence theorem allows us to verify a criterion from \cite{PS16} which then shows that the IDS for certain pattern-equivariant operators 
on graphs can be uniformly approximated by finite volume analogs. The precise class of considered operators is defined in Definition~\ref{defi:operators}.

\begin{corollary} \label{cor:IDS}
Let $\Gamma$ be a finitely generated amenable group and let $\mathcal{C} \in \mathcal{A}^{\Gamma}$
be a coloring of the group by a finite set  $\mathcal{A}$. Suppose that $\mathcal{C}$ is symbolically tempered repetitive as a weighted Delone set with respect to some strong F{\o}lner exhaustion sequence $(T_m)$. Then for every $\mathcal{C}$-invariant self-adjoint operator $H: \ell^2(\Gamma) \to \ell^2(\Gamma)$
and for its integrated density of states $N_H: \RR \to [0,1]$, we get 
\[
\lim_{m \to \infty} \big\| N_H - N_{H_m} \big\|_{\infty} =0,
\]
where the $N_{H_m}$ denote the empirical eigenvalue distribution functions of restrictions 
$H_m$ of $H$ to a subspace of $\ell^2(T_m^{-1})$.
\end{corollary}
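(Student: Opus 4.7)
The plan is to reduce the claim to the main sub-additive convergence theorem (Theorem~\ref{thm:abstr}) via an abstract criterion for uniform IDS approximation due to Pogorzelski--Schwarzenberger \cite{PS16}. First, following Section~\ref{sec:symbolic}, we identify the coloring $\mathcal{C}$ with a weighted Delone set $\Lambda_{\mathcal{C}} \subset \Gamma$; under this identification the symbolic hull of $\mathcal{C}$ in $\mathcal{A}^{\Gamma}$ corresponds to the Delone hull $\mathcal{H}_{\Lambda_{\mathcal{C}}}$, and symbolic tempered repetitivity of $\mathcal{C}$ becomes (almost) tempered repetitivity of $\Lambda_{\mathcal{C}}$ with respect to $(T_m)$. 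Since $\Gamma$ is countable, the sequence $(T_m)$ is automatically a strong F\o lner exhaustion sequence, so that the hypotheses of Theorem~\ref{thm:abstr} are satisfied for $\Lambda_{\mathcal{C}}$.

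Next, for a $\mathcal{C}$-invariant self-adjoint finite hopping range operator $H$ as in Definition~\ref{defi:operators}, the matrix entries $\langle H \delta_x, \delta_y \rangle$ depend only on the restriction of $\mathcal{C}$ to a uniformly bounded neighborhood of $\{x, y\}$. Consequently, for each $\lambda \in \RR$ the eigenvalue counting function attached to $H_m$ admits a pattern-additive decomposition: up to an additive error supported in a boundary layer of $T_m^{-1}$ of controlled thickness, it is a finite sum of contributions indexed by the patch types of $\mathcal{C}$ visible inside $T_m^{-1}$. The criterion of \cite{PS16} then asserts that $\|N_H - N_{H_m}\|_{\infty} \to 0$ provided that, for every patch $P$ of bounded radius, the normalized pattern counts $\wCou_P(T_m)/m_{\Gamma}(T_m)$ converge, uniformly over the hull, as $m \to \infty$.

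The core task is thus to establish this uniform pattern-frequency convergence. This is exactly what Theorem~\ref{thm:abstr} provides when applied to the family of weight functions $\Pi \mapsto \wCou_P(T_m, \Pi)$ for $\Pi$ ranging over $\mathcal{H}_{\Lambda_{\mathcal{C}}}$: these functions are almost additive, translation invariant, and of bounded growth relative to $m_\Gamma$, so the theorem yields a common uniform limit on $\mathcal{H}_{\mathcal{C}}$. Since $\mathcal{A}$ is finite, only finitely many patch types of any fixed radius occur, so one applies Theorem~\ref{thm:abstr} patch by patch and combines the resulting limits to feed into the criterion.

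The main obstacle is matching the input format expected by the criterion of \cite{PS16} to the output produced by Theorem~\ref{thm:abstr}: one must verify that the boundary error in the pattern-additive expansion of $N_{H_m}$ is uniformly in $\lambda$ of order $o(m_\Gamma(T_m))$, and that the pattern-counting weights used by the two theorems coincide. Both verifications are essentially bookkeeping once the dictionary between finite hopping range operators and pattern-equivariant observables on $\mathcal{H}_{\mathcal{C}}$ is set up; the finiteness of $\mathcal{A}$, the hopping range bound in Definition~\ref{defi:operators}, and the F\o lner property of $(T_m)$ together control all remaining error terms.
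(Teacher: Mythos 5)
Your proposal follows essentially the same route as the paper: identify $\mathcal{C}$ with a weighted Delone set via $\mathcal{I}$, use Theorem~\ref{thm:abstr} (in the form already established for weight functions $w_f$ with $f \in C(\mathcal{H}_{\mathcal{I}(\mathcal{C})})$, here with $f$ a cylinder-set indicator) to get uniform convergence of pattern frequencies along $(T_m^{-1})$, and then feed these frequencies into the criterion of \cite[Theorem~7.4]{PS16}. The paper handles one technicality you gloss over as "bookkeeping," namely passing between occurrences in $T_m$ and in $T_m^{-1}$ and matching the left/right conventions of \cite{PS16}, but your overall strategy and the key reductions are the same.
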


This article is organized as follows. In Section~\ref{prelim} we introduce Delone dynamical systems and recall
the equivalence of minimality of these systems and (almost) repetitivity of the underlying set. Section~\ref{ergodictheorem}
is devoted to the main  convergence theorem, cf.\@ Theorem~\ref{thm:abstr}, which is stated and proven 
for so-called admissible weight functions defined on hulls of almost tempered weighted Delone sets.
From this we derive in Section~\ref{UE} the unique ergodicity of the hulls of almost tempered repetitive weighted Delone sets, as stated in Theorem~\ref{Intro2}. Moreover, we derive
Theorems~\ref{Intro1a} and~\ref{Intro1} from Theorem~\ref{Intro2}, cf.\@ Theorem~\ref{LRConvenient}.
Section~\ref{sec:symbolic} is devoted to symbolic systems over a finite set and the proof of Theorem~\ref{thm:LRsymbolic}. The aforementioned applications including the proofs of the Corollaries~\ref{cor:Banachdensities} and~\ref{cor:IDS}
are carried out in Section~\ref{sec:applications}. The Appendices~A and~B contain proofs of some technical lemmas on almost sub-additive weight 
functions and topological aspects of dynamical systems induced by weighted Delone sets.

\medskip

\textbf{Acknowledgments}.
We thank Daniel Lenz for an inspiring discussion on linear repetitive Delone sets and related sub-additive ergodic theorems. Moreover, we are thankful to Christoph Richard for conversations on almost linear repetitivity. We are grateful to Michael Bj\"orklund and Amos Nevo for discussions on F{\o}lner conditions for balls in groups of polynomial growth. We thank Hung Ninh Ngoc for carrying out explicit volume computations in the Heisenberg group with respect to the Cygan-Kor{\'a}nyi metric.

\section{Preliminaries on Delone dynamical systems} \label{prelim}

\subsection{Delone dynamical systems}\label{SecDelDynSys}
We recall that a subset $P$ of a metric space $(X,d)$ is called 
\begin{enumerate}
\item \emph{uniformly discrete} if there exists $r>0$ such that $d(x,y) \geq r$ for all distinct $x,y \in P$;
\item \emph{relatively dense} if there exists $R>0$ such that $X$ is an $R$-neighbourhood of $P$;
\item \emph{Delone} if it is uniformly discrete and relatively dense.
\end{enumerate}
If $P$ is a Delone set, $\sigma \geq 1$ and $\alpha: P \to [\sigma^{-1}, \sigma]$ is a function, then the pair $(P, \alpha)$ is called a \emph{weighted Delone set}. We consider Delone sets as weighted Delone sets with constant weight $1$.

Let $G$ be an lcsc group and let $d$ be a metric on $G$. Following \cite{CdlH}, we say that a metric $d$ on $G$ is 
\begin{itemize}
\item \emph{left-invariant}, if $d(gh, gk)  = d(h,k)$ for all $g,h, k \in G$;
\item \emph{proper} if $B_t := \{g \in G \mid d(g,e) < t\}$ is relatively compact for all $t > 0$;
\item \emph{locally bounded} if every $g\in G$ admits a neighbourhood of finite diameter with respect to $d$;
\item \emph{adapted} if it is left-invariant, proper and locally bounded.
\end{itemize}
By \cite[Prop. 2.A.9]{CdlH} any continuous proper left-invariant metric generates the topology on $G$. We observe that if $d$ and $d'$ are continuous metrics on $G$ which are coarsely equivalent (in the sense of  \cite[p.~11]{CdlH})), then they define the same class of Delone sets.
We will need the following facts  concerning such metrics (see \cite[Section 1.D]{CdlH}):

\begin{proposition}
\label{AdaptedMetrics} 
Every lcsc group admits a continuous proper left-invariant metric $d$. Any such metric is automatically adapted and generates the underlying topology. Moreover, any two adapted metrics on $G$ are coarsely equivalent (even if they are not continuous).\qed
\end{proposition}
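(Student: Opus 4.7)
The plan is to prove the three parts of the proposition in turn, with the coarse equivalence of (possibly non-continuous) adapted metrics requiring the most care.

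First, for the existence of a continuous proper left-invariant metric on an lcsc group $G$, I would invoke the Birkhoff--Kakutani theorem, which produces a compatible left-invariant metric on any first-countable Hausdorff topological group. Since an lcsc group is in particular $\sigma$-compact, one can take an exhaustion $K_1 \subset K_2 \subset \dots$ of $G$ by compact sets and modify this metric so that closed balls around the identity are compact while preserving continuity and left-invariance; this is essentially Struble's theorem.

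Second, for any continuous proper left-invariant $d$, left-invariance and properness are part of the hypothesis, so only local boundedness and compatibility with the topology need to be checked. Local boundedness follows from continuity: for any $g \in G$ pick a relatively compact neighborhood $V$ of $g$; then $\sup_{h \in V} d(h,e) < \infty$ by continuity of $d(\cdot,e)$, and the triangle inequality gives $\operatorname{diam}_d(V) \le 2 \sup_{h \in V} d(h,e) < \infty$. That $d$ generates the given topology is \cite[Prop.~2.A.9]{CdlH} and is checked by comparing neighborhood bases: continuity of $d$ shows that $d$-balls are topologically open, while properness plus left-invariance gives the converse inclusion via a compactness argument on a fixed relatively compact neighborhood of $e$.

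Third, and most delicate, is the coarse equivalence of any two adapted metrics $d_1, d_2$. The key step is the following \emph{bornology lemma}: if $d$ is adapted, then every compact $K \subset G$ is $d$-bounded. To see this, use local boundedness at $e$ to obtain an open neighborhood $V$ of $e$ with $\operatorname{diam}_d(V) =: \rho < \infty$. By left-invariance each translate $gV$ has diameter $\rho$, and $\{gV : g \in K\}$ is an open cover of $K$. Picking a finite subcover $g_1V, \dots, g_nV$, and observing that for any $x = g_i v$, $y = g_j w$ in $K$ one has
\[
d(x,y) \le d(g_i v, g_j v) + d(g_j v, g_j w) = d(g_i, g_j) + d(v, w) \le \max_{i,j} d(g_i, g_j) + \rho < \infty,
\]
we conclude that $\operatorname{diam}_d(K) < \infty$. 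Applied to $d_2$, this shows that for every $R > 0$ the $d_1$-ball $B_R^{d_1}$ (which is relatively compact by properness of $d_1$) is contained in some $d_2$-ball $B_{S(R)}^{d_2}$. By left-invariance, $d_1(x,y) \le R$ then implies $d_2(x,y) \le S(R)$; the reverse implication is symmetric, and this is the desired coarse equivalence.

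The main obstacle is precisely this third step: without continuity one cannot bound the values of $d_2$ on a compact set as those of a continuous function. The local boundedness axiom is what replaces continuity here, and the argument above shows that it is designed exactly to force the bornology of an adapted metric to coincide with the compact bornology on $G$; once this identification is made, coarse equivalence is automatic from left-invariance and properness.
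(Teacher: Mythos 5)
Your overall strategy is sound, and your ``bornology lemma'' is exactly the right tool: adaptedness forces every compact set to be $d$-bounded, and once this is established, coarse equivalence of any two adapted metrics follows from left-invariance and properness. The paper itself defers the proof entirely to \cite[Section~1.D]{CdlH}, so there is no internal argument to compare against; your explicit write-up is a reasonable reconstruction.

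However, there is a genuine error in the key display of the bornology lemma. You write
\[
d(x,y) \le d(g_i v, g_j v) + d(g_j v, g_j w) = d(g_i, g_j) + d(v,w),
\]
and the identification $d(g_i v, g_j v) = d(g_i, g_j)$ uses \emph{right}-invariance, which you do not have: left-invariance only gives $d(gh, gk) = d(h,k)$, i.e.\ cancellation on the left, not on the right. As written the step is invalid. The fix is routine but you should reroute the triangle inequality through the group elements rather than through $v$: with $x = g_i v$ and $y = g_j w$, left-invariance gives $d(x, g_i) = d(g_i v, g_i e) = d(v,e) \le \rho$ and similarly $d(g_j, y) \le \rho$, so
\[
d(x,y) \le d(x, g_i) + d(g_i, g_j) + d(g_j, y) \le 2\rho + \max_{i,j} d(g_i, g_j) < \infty.
\]
With this correction the lemma, and hence the whole of Part~3, goes through. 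Parts~1 (Birkhoff--Kakutani plus Struble) and~2 (continuity on the compact closure of a relatively compact neighbourhood gives local boundedness; compatibility of the topology is \cite[Prop.~2.A.9]{CdlH}) are fine.
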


In view of the proposition we say that a subset $P \subset G$ is uniformly discrete, relatively dense or Delone if it has the corresponding property with respect to some (hence any) continuous adapted metric. For the remainder of this section we fix an adapted metric $d$ on $G$. Later on (see Section~\ref{UE}  below) we will have to choose an adapted metric with very peculiar properties, but for the moment the precise choice of metric will not matter to us. In view of Proposition~\ref{AdaptedMetrics} and the previous considerations, we assume as well that $d$ is continuous, and we make this assumption throughout this and the next section. We then denote by \[
B_t := \{g \in G \mid d(g,e) < t\} \quad \text{and} \quad \overline{B}_t := \{g \in G \mid d(g,e) \leq t\}
\]
the open, respectively closed ball  around the identity so that $B_s \subset \overline{B}_s \subset B_t$ for all $s < t$. 
The open and closed balls around arbitrary centers $g \in G$ are defined as $B_t(g) = gB_t$, and $\overline{B}_t(g) = g \overline{B}_t $, respectively.
We emphasize that $\overline{B}_r$ may be larger than the closure $\overline{B_r}$ of $B_r$. For example, in the non-Archimedean case $\overline{B_r} = B_r$ 
since the latter is compact. 
Due to our continuity assumption on $d$ we know that $B_r$ is open and $\overline{B}_r$ is closed; neither of these statements would be true for a general (i.e.\@ possibly discontinuous) adapted metric.

By \cite{BjHa18}, a subset $P \subset G$ is uniformly discrete with respect to $d$ (or any other continuous adapted metric on $G$) if and only if there exists an open relatively-compact subset $U \subset G$ such that $\#(gU \cap P)\leq 1$ for all $g \in G$ and relatively dense if and only if there exists a compact subset $K \subset G$ such that $P$ is left-$K$-syndetic in the sense that $P. K = G$, where for two subsets $A,B\subseteq G$, the notation $A.B$ refers to the Minkowski product of $A$ and $B$. With a slight abuse of notation we write $gA$ for $\{g\}.A$, where $g\in G$ and $A\subseteq G$. We then say that $P$ is \emph{$U$}-uniformly discrete and $K$-relatively-dense respectively, and refer to a set with both of these properties as a \emph{$(U,K)$-Delone set}. Finally, we denote by
\[
\operatorname{Del}(U,K,\sigma) := \Big\{ (P,\alpha):\, P \mbox{ is } (U,K)\mbox{-Delone and } \alpha:P \to [\sigma^{-1},\sigma] \Big\}
\]
the collection of weighted Delone sets with uniform parameters $U, K, \sigma$. Note that $G$ acts on $\operatorname{Del}(U,K,\sigma)$ by $g.(P, \alpha) := (gP, g_*\alpha)$, where $g_*\alpha(q) := \alpha(g^{-1}q)$.

\medskip

We will identify each $(P, \alpha) \in \operatorname{Del}(U,K,\sigma)$ with the associated Dirac comb
\[
\delta_{(P, \alpha)} := \sum_{x \in P} \alpha(x) \cdot \delta_x,
\]
and thereby think of $\operatorname{Del}(U,K,\sigma)$ as a $G$-invariant subset of the space of Radon measures on $G$. We will always equip the latter space with the weak-$*$-topology. Then by \cite{BaLe04, BjHaPoII} the subspace $\operatorname{Del}(U,K,\sigma)$ is compact, and the $G$-action on this space is jointly continuous. The topology induced on the space  $\operatorname{Del}(U,K,1)$ of (unweighted) Delone sets is precisely the Chabauty--Fell topology (see \cite{BjHaPoII} and also \cite[Section 2.1]{MR13} and the references therein).\medskip

We will need yet another description of the topology on $\operatorname{Del}(U,K,\sigma)$: Given a relatively compact subset $S \subset G$ and two Radon measures $\mu$ and $\nu$ on $G$ we define
\begin{equation} \label{eqn:formuladistMeasure}
\quad d_S(\mu, \nu) := \inf\big\{ \delta > 0:\, \big| \mu\big( B_{\delta}(y) \big) -\nu\big( B_{\delta}(y) \big)  \big| < \delta  \mbox{ for all } y \in S \cap (\mathrm{\supp}(\mu)\cup \mathrm{\supp}(\nu)) \big\}.
\end{equation}
In particular, using our identification of weighted Delone sets and their corresponding Dirac combs, we define
\begin{equation} \label{eqn:formuladist}
d_S(\Lambda, \Pi) := d_S(\delta_\Lambda, \delta_\Pi)
\end{equation}
for weighted Delone sets $\Lambda, \Pi$. Then, given  $\Lambda \in \operatorname{Del}(U,K,\sigma)$, the sets 
\[
\mathcal{U}_{S, \delta}(\Lambda) := \big\{ \Pi  \in \operatorname{Del}(U,K,\sigma):\, d_S(\Lambda, \Pi) < \delta  \big\}
\]
form a neighbourhood basis of $\Lambda$ in $\operatorname{Del}(U,K,\sigma)$, as $\delta$ and $S$ vary over all positive real numbers and all relatively compact subsets of $G$ respectively (cf.\@ Proposition~\ref{prop:neighborhood} in Appendix~\ref{AppendixB}). In particular, Delone sets $\Pi_n$ converge to $\Lambda$ in $\operatorname{Del}(U,K,\sigma)$ if and only if $\lim_{n \to \infty} d_S(\Pi_n, \Lambda) = 0$ for all relatively compact $S$.\medskip

Given $\Lambda \in \operatorname{Del}(U,K,\sigma)$, the orbit closure $\mathcal{H}_{\Lambda} :=  \overline{\big\{ g.\Lambda :\, g \in G\big\}}$ is compact an the $G$-action on $\mathcal{H}_{\Lambda}$ is jointly continuous, i.e.\  $G\curvearrowright \mathcal{H}_{\Lambda}$ is a topological dynamical system over $G$. We refer to $\mathcal H_\Lambda$ as the \emph{hull} of $\Lambda$ and to $G\curvearrowright \mathcal{H}_{\Lambda}$ as a  {\em Delone dynamical system}.

\subsection{Almost repetitivity and minimality} \label{sec:arepmin}
In this subsection we give a combinatorial characterization of minimality of Delone dynamical systems, generalizing results of \cite{FrRi14} in the unweighted case. Throughout this subsection we fix a weighted Delone set $\Lambda = (P, \alpha) \in \operatorname{Del}(U,K,\sigma)$ with associated Dirac comb $\delta_\Lambda$. If $S \subset T$ are two relatively compact subsets of $G$ and $\nu$ is a Radon measure on $T$, then we denote by $\nu|_S$ the restriction of $\nu$ to the Borel $\sigma$-algebra of $S$.
 \medskip

Given a relatively compact subset $S \subset G$ we refer to the pair $\Lambda_S:=({\delta_\Lambda}|_{S}, S)$ as the \emph{$S$-patch} of $\Lambda$. If $p = (\mu, S)$ is a patch of $\Lambda$ we refer to $\mu$ as the \emph{underlying measure} and $S$ as the \emph{support} of $p$ (which is not to be confused with the support of $\mu$). For $T \subset S$ we define the \emph{restriction} of $p$ to $T$ by $p|_{T} := (\mu|_{T}, T)$.

\medskip

Two patches $p = (\mu, S)$ and $q = (\nu, T)$ of $\Lambda$ are called \emph{equivalent} if there exists $g \in G$ with $g.p := (g_*\mu, gS) = (\nu, T)$, and an equivalence class of patches is called a \emph{pattern}. A pattern is said to be of \emph{size $S$} for a relatively compact set $S$ if the supports of its patches are translates of $S$.

\medskip
 
Given $\delta \geq 0$, we say that two patches $p = (\mu, S)$ and $q = (\nu, T)$ of $\Lambda$ are $\delta$-similar if there exists $g \in G$ such that $gS = T$ and $d_T(g_*\mu, \nu) < \delta$. By definition, two patches are $0$-similar if and only if they are equivalent. In particular, $0$-similarity is an equivalence relation, whereas $\delta$-similarity is not a transitive relation for $\delta>0$.

\medskip

Given a relatively compact subset $T \subset G$ and $\delta \geq 0$ we say that a pattern $[p]$ of $\Lambda$ with representative $p = (\mu, S)$ \emph{$\delta$-occurs in $T$} if there is some $g \in G$ such that $gS \subset T$ and $p$ is $\delta$-similar to $\Lambda_{gS}$. If $\delta=0$ we simply say that $[p]$ \emph{occurs in $T$}. 
Similarly, a patch $p=(\mu,S)$ ($\delta$-)occurs in a patch $q=(\nu,T)$ if there is some $g\in G$ such that $gS\subseteq T$ and $p$ is ($\delta$-)similar to $q|_{gS}$.

 \medskip

\begin{definition} 
A function $\varrho: (0,1) \times [1, \infty) \to [1, \infty)$ is called an {\em almost repetitivity function} for  $\Lambda$ if for every $\delta \in (0,1)$ and all $R \geq 1$ every pattern of $\Lambda$ of size $B_R$ $\delta$-occurs in $B_{\varrho(\delta, R)}(h)$ for all $h \in G$. If such a function exists, then $\Lambda$ is called {\em almost repetitive}. 
\end{definition}

Note that $\varrho(\delta,R)\geq R$ for all $\delta>0$.

\begin{remark}
Some authors define linear repetitivity by asking the above condition to hold for all $R\geq r_0$, where $r_0>0$ is an arbitrary constant, whereas we insist here that $r_0 =1$. While the two possible definitions are a priori different, this difference is irrelevant for us. Namely, we can always rescale the metric by a constant to achieve that the condition holds with $r_0=1$. The reader is invited to check that all our results are invariant under such rescalings, and hence all our theorems hold true for either definition. In view of this fact, we will always assume $r_0 = 1$ to keep the notation simple.
\end{remark}

\begin{proposition} 
\label{prop:minmin}
Let $\Lambda$ be a weighted Delone set in $G$. Then $\Lambda$ is almost repetitive 
if and only if the associated Delone dynamical system $G \curvearrowright \mathcal H_\Lambda$ is minimal. 
\end{proposition}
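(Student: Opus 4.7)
The plan is to prove both directions by a compactness-plus-approximation argument, using the quasi-metric $d_S$ of \eqref{eqn:formuladistMeasure} as the bridge between the combinatorial notion of $\delta$-similarity and the topology on $\mathcal{H}_\Lambda$. Two structural properties of $d_S$ will be used throughout: monotonicity $d_S \leq d_T$ whenever $S \subset T$, and translation invariance $d_{gS}(g.\mu, g.\nu) = d_S(\mu, \nu)$, which is immediate from left-invariance of $d$. I would first set up the following dictionary, up to small boundary corrections: the pattern $[\Lambda_{\overline{B}_R}]$ $\delta$-occurs at a position $a \in G$ (with witnessing support $a \overline{B}_R$ and witnessing group element $a$ itself) if and only if $d_{\overline{B}_R}(a^{-1}.\Lambda, \Lambda) < \delta$. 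This reduces to the pushforward-restriction identity $a_*(\delta_\Lambda|_{\overline{B}_R}) = (a.\delta_\Lambda)|_{a \overline{B}_R}$ combined with translation invariance.

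For the implication from almost repetitivity to minimality, I would fix $\Pi \in \mathcal{H}_\Lambda$ together with a basic neighbourhood $\mathcal{U}_{\overline{B}_R, \varepsilon}(\Lambda)$ of $\Lambda$, then produce $h \in G$ with $h.\Pi$ in that neighbourhood; since $\overline{G.\Lambda} = \mathcal{H}_\Lambda$, this forces $\overline{G.\Pi} = \mathcal{H}_\Lambda$ and hence minimality. Choose $g_n \in G$ with $g_n.\Lambda \to \Pi$ and set $N := \varrho(\varepsilon/3, R)$. Almost repetitivity applied at the centre $g_n^{-1}$ produces $a_n \in G$ with $a_n \overline{B}_R \subset B_N(g_n^{-1})$ and $d_{\overline{B}_R}(a_n^{-1}.\Lambda, \Lambda) < \varepsilon/3$. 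Set $h_n := (g_n a_n)^{-1}$, so that $h_n.(g_n.\Lambda) = a_n^{-1}.\Lambda$ and $g_n a_n \overline{B}_R \subset g_n B_N(g_n^{-1}) = B_N$; translation invariance and monotonicity then give
\[
d_{\overline{B}_R}(h_n.\Pi, a_n^{-1}.\Lambda) = d_{g_n a_n \overline{B}_R}(\Pi, g_n.\Lambda) \leq d_{\overline{B}_N}(\Pi, g_n.\Lambda) \longrightarrow 0 \quad \text{as } n \to \infty.
\]
For large $n$, combining this with $d_{\overline{B}_R}(a_n^{-1}.\Lambda, \Lambda) < \varepsilon/3$ via a triangle-type estimate for $d_S$ yields $d_{\overline{B}_R}(h_n.\Pi, \Lambda) < \varepsilon$, as required.

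For the converse direction, I would assume $\mathcal{H}_\Lambda$ is minimal, fix $\delta \in (0,1)$ and $R \geq 1$, and consider the open set $U := \mathcal{U}_{\overline{B}_R, \delta}(\Lambda) \subset \mathcal{H}_\Lambda$. Minimality makes $\{g^{-1}.U : g \in G\}$ an open cover of the compact hull, so compactness yields a finite subcover indexed by a finite set $F \subset G$; hence every $\Pi \in \mathcal{H}_\Lambda$ satisfies $d_{\overline{B}_R}(g.\Pi, \Lambda) < \delta$ for some $g \in F$. Applied to $\Pi := h^{-1}.\Lambda$ for arbitrary $h \in G$, this furnishes $g \in F$ with $d_{\overline{B}_R}(gh^{-1}.\Lambda, \Lambda) < \delta$, and setting $a := hg^{-1}$ gives $a^{-1}.\Lambda = gh^{-1}.\Lambda$ and $d_{\overline{B}_R}(a^{-1}.\Lambda, \Lambda) < \delta$. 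By the dictionary, this encodes a $\delta$-occurrence of $[\Lambda_{\overline{B}_R}]$ at position $a$. Left-invariance of $d$ gives
\[
d(a, h) = d(hg^{-1}, h) = d(g^{-1}, e) = d(g, e) \leq D := \max_{g' \in F} d(g', e),
\]
so $a \overline{B}_R \subset \overline{B}_{D+R}(h) \subset B_{D+R+1}(h)$, and $\varrho(\delta, R) := D(\delta, R) + R + 1$ serves as an almost repetitivity function.

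The main technical obstacle I expect is the precise verification of the dictionary and of the triangle-type estimate for $d_S$: both fail by boundary corrections involving points $y \in \overline{B}_R$ whose $\delta$-balls extend beyond $\overline{B}_R$, and the witness $g'$ in the definition of $\delta$-similarity is only unique up to the (potentially nontrivial) stabilizer of $\overline{B}_R$ in $G$. I would handle these by a standard thickening device: replace $\overline{B}_R$ by $\overline{B}_{R+3\delta}$ and shrink each tolerated error by a constant factor (which is already built into the splittings $\varepsilon = \varepsilon/3 + \varepsilon/3 + \varepsilon/3$ above), so that all relevant $\delta$-balls around points of $\overline{B}_R$ fit inside the enlarged reference set. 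Since the weights take values in the fixed compact interval $[\sigma^{-1}, \sigma]$, they contribute only routine bookkeeping, and the combinatorial skeleton of the argument is the weighted analogue of the one in \cite{FrRi14}.
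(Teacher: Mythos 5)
Your forward implication (almost repetitivity $\Rightarrow$ minimality) is a direct argument and therefore differs in form from the paper's proof, which proceeds by contradiction: the paper takes a hypothetical $\Pi$ with non-dense orbit, builds a compact neighbourhood $\mathcal{V}$ of $\Lambda$ disjoint from $\mathcal{H}_\Pi$, and uses right-relative-density of the return set $T_{K,\varepsilon}(\Lambda)$ to force $\mathcal{H}_\Lambda \subseteq K'.\mathcal{V}$, a contradiction. Your direct version is sound once the triangle-type estimate for $d_S$ and the thickening bookkeeping you flag are supplied (and it even invokes almost repetitivity only for the single pattern $[\Lambda_{\overline{B}_R}]$ at the identity, which is a slightly weaker hypothesis).

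Your converse implication has a genuine gap, and it sits exactly where the previous remark helps you in the other direction. You cover the hull by translates of the single neighbourhood $U = \mathcal{U}_{\overline{B}_R,\delta}(\Lambda)$, extract a finite subcover $F$, and conclude that $[\Lambda_{\overline{B}_R}]$ (the patch of $\Lambda$ centred at the identity) $\delta$-occurs in every ball of radius $D+R+1$. But almost repetitivity requires \emph{every} pattern of $\Lambda$ of size $B_R$ --- that is, $[\Lambda_{B_R(x)}]$ for \emph{all} $x \in G$ --- to $\delta$-occur with a bound uniform in $x$, and your construction only touches $x = e$. If you instead run the argument with $\mathcal{U}_{\overline{B}_R,\delta}(x^{-1}.\Lambda)$ in place of $U$, you get for each $x$ a finite set $F(x)$ and a constant $D(x)$, but nothing keeps $D(x)$ bounded uniformly over $x$, so the almost repetitivity function $\varrho(\delta,R)$ is not yet well-defined. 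The paper closes this by a second compactness step: it performs the covering argument with neighbourhoods centred at an \emph{arbitrary} $\Pi \in \mathcal{H}_\Lambda$ (capturing all $B_R$-patterns of $\Lambda$ as $\Pi$ varies), and then invokes precompactness of $\operatorname{Del}(U,K,\sigma)$ to find finitely many patterns $[p_1],\dots,[p_\ell]$ such that every $B_R$-patch of $\Lambda$ is $\varepsilon$-similar to some $[p_k]$, finally setting $\varrho(R) := \max_k \varrho_k(R) + 2R$. You would need to add a step of this kind, together with one further application of your triangle-type estimate (to pass from ``$\varepsilon$-similar to an $[p_k]$ that $\delta$-occurs'' to a $\delta'$-occurrence, with $\delta$-similarity not being transitive), to complete this direction.
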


\begin{proof} We closely follow the proof of \cite[Theorem~3.11]{FrRi14}.

\medskip

	Suppose the hull of $\Lambda = (P, \alpha)$ is minimal. Let $R \geq 1$ and fix $\Pi= (Q,\beta) \in \mathcal{H}_{\Lambda}$. Assume that $0 <\varepsilon < R$. We further choose $R^{\prime} \geq R$ such that $B_{R^{\prime}} \cap Q = \overline{B}_R \cap Q$. We set 
	\[
	\mathcal{U}_{R,\varepsilon} \big( \Pi \big) = 
	\big\{ \Theta \in \mathcal{H}_{\Lambda}:\, 
		d_{B_{R^{\prime}}}(\Theta,\Pi)<\varepsilon
	\big\}.
	\]
	Then $\mathcal{U}_{R,\varepsilon} \big( \Pi \big)$ is a neighborhood of $\Pi$ in the weak-$\ast$-topology, cf.\@ Proposition~\ref{prop:neighborhood}. By minimality of $\Lambda$, every element in $\mathcal{H}_{\Lambda}$ has a dense orbit. This gives
	\[
	\mathcal{H}_{\Lambda} = \bigcup_{x \in G} x \, \mathcal{U}_{R,\varepsilon} \big( \Pi \big).
	\]
	By compactness of the hull, we find finitely many $x_1, x_2, \dots, x_{\ell}$ such that $\mathcal{H}_{\Lambda} \subseteq \bigcup_{j=1}^{\ell} x_j \, \mathcal{U}_{R,\varepsilon} \big( \Pi \big)$. We now choose $\varrho_1(R) > 0$ such that $x_j \in B_{\varrho_1(R)}$ for all $1 \leq j \leq \ell$. This means that for all $h \in G$ there is some $h^{*} \in B_{\varrho_1(R)}$ such that $h.(P,\alpha) = h^{*}.(D,\gamma)$ and $(D,\gamma)$ is some element in $\mathcal{U}_{R,\varepsilon}(\Pi)$. This also gives $D \cap B_{R^{\prime}} = h^{*\, -1} h\, P \cap B_{R^{\prime}}$. Since the patches $\big(\delta_{(Q \cap B_{R^{\prime}}, \beta_{| Q \cap B_{R^{\prime}}})}, B_{R^{\prime}} \big)$ and  $\big( \delta_{(D \cap B_{R^{\prime}}, \gamma_{| D \cap B_{R^{\prime}}})}, B_{R^{\prime}} \big)$ are $\varepsilon$-similar the left-invariance of the metric tells us that also
	$\big( \delta_{(Q \cap B_{R^{\prime}}, \beta_{| Q \cap B_{R^{\prime}}})}, B_{R^{\prime}} \big)$ is $\varepsilon$-similar to the $R^{\prime}$-patch of $P$ centered at $h^{-1}h^{*}$. We will now show that $h^{-1}h^{*} B_R B_{\varepsilon} \subseteq B_{\varrho_1(R) + 2R}(h^{-1})$. The condition $\varepsilon < R$ gives $B_R B_{\varepsilon} \subseteq B_{2R}$. Further, we have $h^{*}B_{2R} \subseteq B_{\varrho_1(R)}B_{2R} \subseteq B_{\varrho_1(R) + 2R}$. Hence, we obtain $h^{-1}h^{*}B_RB_{\varepsilon} \subseteq B_{\varrho_1(R) + 2R}(h^{-1})$, as claimed. Since $\Pi$ was chosen arbitrarily this shows that for all possible $B_R$-patterns $[p]$ of $\Lambda$ and for each $h \in G$ there is some $R$-patch of $\Lambda$ contained in $B_{\varrho_1(R) + 2R}(h)$ which is $\varepsilon$-similar to a representative of $[p]$. A straight forward compactness argument shows that there is a finite number of patterns $[p_k]$ such that every $B_{R}$-patch of $\Lambda$ is $\varepsilon$-similar to a representative of one of the $[p_k]$. Hence, repeating the above procedure finitely many times, we can set $\varrho(R) := \max_{k} \varrho_k(R) + 2R$ and we find that $\Lambda$ is almost repetitive with almost repetitivity function $\varrho$.
	
	\medskip
	
	Conversely, assume by contradiction that $\Lambda = (P,\alpha)$ is almost repetitive but $\mathcal{H}_\Lambda$ is not minimal. Let $\Pi = (Q,\beta) \in \mathcal{H}_{\Lambda}$ and $\{g.\Pi:\,g\in G\}$ is not dense in $\mathcal{H}_{\Lambda}$. This implies $\Lambda \notin \mathcal{H}_{\Pi}$ as well as $\mathcal{H}_{\Pi} \subsetneq \mathcal{H}_{\Lambda}$. We fix a compact neighborhood $\mathcal{V} \subseteq \mathcal{H}_{\Lambda}$ of $\Lambda$ containing the set $\{\Theta \in \mathcal{H}_{\Lambda}:\, d_{B_{1/\varepsilon}} \big( \Theta,\, \Lambda \big) < \varepsilon \} $ for some small $\varepsilon > 0$ such that $\mathcal{V} \cap \mathcal{H}_{\Pi} = \emptyset$. We now take a compact set $K \subseteq G$ with $B_{1/\varepsilon}(e) \subseteq \mathring{K}$ and define $T_{K, \varepsilon}\big( \Lambda \big) = \{g \in G:\, d_{\mathring{K}}\big(g.\Lambda, \Lambda \big) < \varepsilon \}$. 
	
	\medskip
	
	{\bf Claim:} The set $T_{K, \varepsilon}\big( \Lambda \big)$ is right-relatively dense. 
	
	\medskip
	
	Assuming the validity of the claim for a moment, we see 
	\[
	G.\Lambda =  \big( K^{\prime}T_{K, \varepsilon}(\Lambda)\big).\Lambda \subseteq K^{\prime}.\mathcal{V}
	\]
	for some compact set $K^{\prime} \subseteq G$. By compactness of $\mathcal{V}$ and the continuity of the $G$-action, we find that the set $K^{\prime}.\mathcal{V}$ is compact. Hence we arrive at $\mathcal{H}_{\Lambda} \subseteq K^{\prime}.\mathcal{V}$. We get that
	$\Pi = x.\Theta$ for some $x \in K^{\prime}$ and some $\Theta \in \mathcal{V}$. Thus, $\Theta = x^{-1}\Pi$  contradicting the fact that $\mathcal{V} \cap \mathcal{H}_{\Pi} = \emptyset$.
	
	\medskip
	
	It remains to show the claim. For this we proceed along the lines of the proof of Lemma~3.6, implication~(iii) $\Rightarrow$ (i) in \cite{FrRi14}. To this end, find some $r \geq 1$ such that $\mathring{K} \subseteq B_r$ and set $R:= \varrho(\varepsilon, r)$, where $\varrho$ is an almost repetitivity function of $\Lambda$. We fix a countable cover of $G$ of the form $G = \bigcup_{i=1}^{\infty} B_R(g_i)$. By almost repetitivity, for all $i \in \NN$, we find $x_i \in G$ such that $x_i^{-1}B_r \subseteq B_{R}(g_i)$ and $d_{B_r}\big( x_i\Lambda, \Lambda \big) = d_{x_i^{-1}B_r}\big( \Lambda, x_i^{-1}\Lambda \big) < \varepsilon$. Now to show the claim it clearly suffices to show that $T_{r,\varepsilon}:= \{x_i:\, i \in \NN\} \subseteq T_{K, \varepsilon}(\Lambda)$ is right-relatively dense in $G$. For each $b \in B_r$, we have $x_i^{-1}b \subseteq B_R(g_i)$ for all $i \in \NN$. This shows that $G = \bigcup_{i=1}^{\infty} B_{2R}(x_i^{-1}b) = \bigcup_{i=1}^{\infty} x_i^{-1} B_{2R}(b)$. For some fixed $b \in B_r$, we define 
	\[
	K^{\prime} := \overline{\big\{ g  \in G:\, g^{-1}b \in B_{2R}(b) \big\}}.
	\]
	This set is clearly compact. We will show $K^{\prime} T_{r,\varepsilon} = G$. Take $h \in G$. This gives 
	$h^{-1}b \in x_i^{-1}B_{2R}(b)$ for some $i \in \NN$. This yields
	\begin{eqnarray*}
		h &\in& \big\{ g \in G:\, g^{-1}b \in x_i^{-1}B_{2R}(b) \big\} = \big\{ g \in G:\, x_i g^{-1}b \in B_{2R}(b) \big\} \\
		&=& \big\{ y \in G:\, y^{-1}b \in B_{2R}(b) \big\} \cdot x_i \subseteq K^{\prime}  T_{r, \varepsilon}. 
		\end{eqnarray*} 
	Since $h$ was chosen arbitrarily, the proof of the claim is finished. 
\end{proof}\medskip

While almost repetitivity arises naturally in the study of minimality of Delone dynamical systems, a more classical notion is the following one:

\begin{definition} \label{defi:rep}
A function $\varrho: [1,\infty) \to [1, \infty)$ is called a {\em repetitivity function} for  $\Lambda$ if for all $R \geq 1$, every pattern of $\Lambda$ of size $B_R$ occurs in $B_{\varrho(R)}(h)$ for all $h \in G$. If such a function exists, then $\Lambda$ is called \emph{repetitive}.
\end{definition}

\begin{remark}
Similarly as for the almost repetitivity function, we have $\varrho(R)\geq R$ and we assume $R\geq 1$.
\end{remark}

We say that $\Lambda = (P, \alpha)\in \operatorname{Del}(U,K,\sigma)$ has {\em finite local complexity} (FLC) if for ever compact set $S$, the set $\{(x^{-1}.\Lambda)|_S :\, x\in P\}$ is finite.

\begin{proposition}
\label{prop:AlmRep-Rep}
Let $\Lambda\in \operatorname{Del}(U,K,\sigma)$ be of finite local complexity. If $\Lambda$ is almost repetitive then $\Lambda$ is repetitive.
\end{proposition}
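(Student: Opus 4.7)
My plan is to use finite local complexity (FLC) to upgrade the $\delta$-approximate occurrences supplied by almost repetitivity into genuine occurrences at a nearby point; fixing $\delta=\delta_0(R)$ at an appropriate value then converts the two-variable almost repetitivity function $\varrho(\delta,R)$ into a one-variable repetitivity function $\tilde\varrho(R)$.

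Fix $R\geq 1$ and a pattern $[p]$ of $\Lambda$ of size $B_R$. The case of empty patterns (which only arise when $R$ is below the relative density radius) is handled directly: if $\delta<\sigma^{-1}$, then any patch $\delta$-similar to an empty patch must itself be empty, so almost repetitivity with this $\delta$ already yields exact repetitivity for such patterns. Assume henceforth that $[p]$ has nonempty support and pick a representative $p=\Lambda_{B_R(x_0)}$ with $x_0\in P$. Choose $R'>R$ slightly larger and set $\mu_0:=x_0^{-1}_*\delta_\Lambda|_{\oB_{R'}}$; by FLC, $\Ff_{R'}:=\{y^{-1}_*\delta_\Lambda|_{\oB_{R'}}:y\in P\}$ is finite and contains $\mu_0$. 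Choose $\eta>0$ strictly less than half the minimum $d_{B_R}$-distance between distinct restrictions of elements of $\Ff_{R'}$ to $B_R$. Using uniform continuity of the conjugation map $(q,u)\mapsto q^{-1}uq$ on the compact set $\oB_{R'}\times\oB_1$, pick $\delta_0\in(0,\min\{\eta/2,\sigma^{-1}\})$ so small that $d(e,q^{-1}uq)<\eta/2$ for every $q\in\oB_{R'}$ and $u\in\oB_{\delta_0}$.

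Applying almost repetitivity with parameter $\delta_0$, for each $h\in G$ I obtain $g\in G$ with $gB_R\subset B_{\varrho(\delta_0,R)}(h)$ and $d_{B_R}(g^{-1}_*\delta_\Lambda|_{B_R},\mu_0|_{B_R})<\delta_0$. Since $e\in\supp(\mu_0)$ and $\delta_0<\sigma^{-1}$, the $\delta_0$-closeness locates some $y\in P\cap B_{\delta_0}(g)$. Setting $u:=y^{-1}g\in\oB_{\delta_0}$ gives $y^{-1}\Lambda=u\cdot g^{-1}\Lambda$, and the choice of $\delta_0$ ensures $d_{B_R}(y^{-1}_*\delta_\Lambda|_{B_R},g^{-1}_*\delta_\Lambda|_{B_R})\leq\eta/2$. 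The triangle inequality then yields $d_{B_R}(y^{-1}_*\delta_\Lambda|_{B_R},\mu_0|_{B_R})<\eta$; since both measures arise by restriction from elements of $\Ff_{R'}$, the choice of $\eta$ forces them to coincide. Hence $\Lambda_{B_R(y)}$ is strictly equivalent to $p$, so $[p]$ occurs exactly at $y$, and $yB_R\subset B_{\tilde\varrho(R)}(h)$ with $\tilde\varrho(R):=\varrho(\delta_0(R),R)+\delta_0(R)+R$ depending only on $R$.

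The main technical hurdle is the uniform continuity estimate for $q^{-1}uq$: in a general left-invariant but non-bi-invariant adapted metric, left-translations by small group elements need not displace points by uniformly small amounts, so one must exploit joint continuity of conjugation together with compactness of $\oB_{R'}$. A secondary issue is the careful handling of boundary effects, addressed by slightly inflating $R$ to $R'$ in the FLC step so that support points shifted by $u$ remain inside $\oB_{R'}$. With these two points in place, the rest of the argument — the triangle inequality step, the reduction to $x_0\in P$, and the disposal of empty patterns — is a routine consequence of FLC and almost repetitivity.
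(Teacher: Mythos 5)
Your conjugation idea is a genuinely different route from the paper's. The paper exploits finite local complexity to find a radius $R_1$ and a threshold $\delta_1$ so that $y^{-1}P$ avoids the $B_{\delta_1}$-thickened boundary of $B_{R+R_1}$ for every $y\in P$; this lets it shift the \emph{window} of the restriction by a small $g'$ without gaining or losing any support points, and then conclude exact coincidence by FLC-separation. You instead control the displacement of the \emph{points} of $g^{-1}\Lambda$ under left multiplication by a small $u$, using uniform continuity of $(q,u)\mapsto q^{-1}uq$ on a compact set. Both are legitimate ways to handle non-commutativity, and yours is arguably more transparent about where the metric's continuity is being used.

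However, there is a real gap near the start. You write ``pick a representative $p=\Lambda_{B_R(x_0)}$ with $x_0\in P$.'' That representative need not exist. A pattern of $\Lambda$ of size $B_R$ is the equivalence class of some $\Lambda_{B_R(z)}$ with $z\in G$ arbitrary; the other representatives that are patches of $\Lambda$ are $\Lambda_{B_R(gz)}$ only for those $g$ with $g_*\delta_\Lambda|_{B_R(z)}=\delta_\Lambda|_{B_R(gz)}$, and in general none of these centers lie in $P$. Your entire localization step then fails: if $x_0\notin P$ then $e\notin\supp(\mu_0)$, so the $\delta_0$-closeness no longer produces a point $y\in P\cap B_{\delta_0}(g)$, and the rest of the argument has nothing to anchor to. The correct fix (which the paper carries out) is to pass to a strictly larger patch: pick $y_z\in P$ with $z\in B_{R_1}(y_z)$ (possible by relative density, $R_1$ a fixed constant), set $p':=\Lambda_{B_{R+R_1}(y_z)}$, note that $p$ occurs inside $p'$, and then run the argument for $p'$. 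This also shows that ``$R'>R$ slightly larger'' is not a harmless choice — $R'$ must exceed $R$ by at least the relative-density constant $R_1$, and in addition by enough margin that every point of $g^{-1}P$ landing in $B_{\delta_0}B_R$ still lies in $\oB_{R'}$ so the conjugation estimate applies to it. Since $\delta_0$ can always be taken less than $1$, this second requirement is easy to meet once $R_1$ is built in, but as stated the argument covers only the special patterns with a $P$-centered representative, plus empty ones.
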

\begin{proof}
Without loss of generality, we assume that $e\in U$. Since $\Lambda = (P,\alpha)$ is of finite local complexity, the set
$$
P(r):= \big\{  (x^{-1}.\Lambda)|_{B_{r}} :\, x\in P \big\}
$$
is finite for each $r>0$. In particular, if $r>0$ is such that the topological boundary $\overline{B_r}\setminus B_r$ is not intersecting $x^{-1}P$ for any $x\in P$, then there is a $\delta>0$ such that $x^{-1}P\cap \partial_{B_{\delta_1}}\big(B_{r}\big) = \emptyset$ holds for all $x\in P$ where 
$$
\partial_{B_{\delta_1}}(B_{r}) := \{g \in G:\, B_{\delta_1}g \cap B_{r} \neq \emptyset \, \wedge\, B_{\delta_1}g \cap (G \setminus B_{r}) \neq \emptyset\}
$$
is the $B_{\delta_1}$ boundary of the topological boundary of $B_r$.

Let $R>0$ be fixed. Since $\Lambda$ is $K$-relatively dense and of finite local complexity, we can choose an $R_1>0$ and a $\delta_1>0$ such that 
\begin{itemize}
\item[(a)] for each $x\in G$, there is an $y_x\in P$ satisfying $x\in B_{R_1}(y_x)$,
\item[(b)] for all $y\in P$, we have $y^{-1}P\cap \partial_{B_{\delta_1}}\big(B_{R+R_1}\big) = \emptyset$.
\end{itemize}

Consider an $R$-patch $p:=\Lambda_{B_R(x)}$ for $x\in G$. Let $y_x\in P$ be such that $x\in B_{R_1}(y_x)$. Now consider the patch
$$
p':=(y_x^{-1}.\Lambda)_{B_{R+R_1}} = y_x^{-1}. \big(\Lambda|_{B_{R+R_1}(y_x)}\big)
$$
By construction $p$ occurs in $p'$.  Let $0<\delta<\min\{\delta_1,\sigma^{-1}\}$ be such that $B_\delta\subseteq U$ and 
$$
q_1,q_2\in P(R+R_1) \text{ with } d_{B_{R+R_1}}(\delta_{(q_1,B_{R+R_1})},\delta_{(q_2,B_{R+R_1})})\leq\delta
	\quad\Longrightarrow\quad q_1=q_2.
$$ 
The latter is possible as $P(R+R_1)$ is finite.

\medskip

Let $h\in G$. Since $\Lambda$ is almost repetitive, $p'$ $\delta/2$-occurs in $B_{\varrho(\delta/2,R+R_1)}(h)$,
where $\varrho$ is an almost repetitivity function for $\Lambda$. 
Thus, there is a $g\in G$ such that $gB_{R+R_1}\subseteq B_{\varrho(\delta/2,R+R_1)}(h)$ and $d_{B_{R+R_1}}\big( \delta_{p'}, \delta_{g^{-1}.q_h} \big) <\delta/2$ where $q_h:=\Lambda_{B_{R+R_1}(g)}$. Thus,
\begin{equation*}
d_{B_{R+R_1}}\big( p', g^{-1} q_h \big)
	= \inf\left\{ 
			\varepsilon>0 :\!\!
			\begin{array}{c}
			\big|\delta_{y_g^{-1}.\Lambda}|_{B_{R+R_1}}\big(B_{\varepsilon}(y)\big) 
				- g^{-1}_{\ast}\delta_{\Lambda}|_{B_{R+R_1}(g)}\big(B_\varepsilon(y)\big)\big| <\varepsilon\\
			\mbox{ for all } y\in (y_x^{-1}P\cap B_{R+R_1})\cup (g^{-1}P\cap B_{R+R_1})
			\end{array}			
		\right\}	
	<\delta/2
\end{equation*}
holds.
By construction $\delta_{p'}$ has support on $\{e\}$, namely, $\delta_{p'}(e)>\sigma^{-1}$. Hence, there is a unique $y_g\in P$ and $g'\in B_{\delta/2}$ such that $g=y_g g'$ using that $\delta<\sigma^{-1}$, $P$ is 
$B_\delta$-uniformly discrete and $d_{B_{R+R_1}}\big( \delta_{p'}, \delta_{g^{-1}.q_h} \big)<\delta/2$. Due to condition (b), we conclude $\delta_{y_g^{-1}.\Lambda}|_{B_{R+R_1}} = \delta_{y_g^{-1}.\Lambda}|_{B_{R+R_1}(g')}$ and so
\begin{align*}
g^{-1}. q_h
	= g'^{-1} \big( \delta_{y_g^{-1}.\Lambda}|_{B_{R+R_1}(g')}, B_{R+R_1}(g')\big)
	= g'^{-1} \big( \delta_{y_g^{-1}.\Lambda}|_{B_{R+R_1}}, B_{R+R_1}(g') \big).
\end{align*}
By construction, $(y_g^{-1}.\Lambda)_{B_{R+R_1}}$ occurs in $B_{\varrho(\delta/2,R+R_1)+\delta/2}(h)$ since $g'\in B_{\delta/2}$.
Invoking that $\delta<\sigma^{-1}$, $P$ is $B_\delta$-uniformly discrete, $g'\in B_{\delta/2}$ and $d_{B_{R+R_1}}\big(\delta_{p'}, \delta_{g^{-1}.q_h} \big)<\delta/2$, we derive
$$
d_{B_{R+R_1}}\big(\delta_{p'}, \delta_{(y_g^{-1}.\Lambda)_{B_{R+R_1}}}\big)\leq\delta.
$$
Thus, $(y_g^{-1}.\Lambda)_{B_{R+R_1}}=p'$ follows by the choice of $\delta$ and since $(y_g^{-1}.\Lambda)|_{B_{R+R_1}},p'\in P(R+R_1)$. Since $p$ occurs in $p'$ and $(y_g^{-1}.\Lambda)_{B_{R+R_1}}$ occurs in $B_{\varrho(\delta/2,R+R_1)+\delta/2}(h)$, we conclude that $p$ occurs in $B_{\varrho(\delta/2,R+R_1)+\delta/2}(h)$. 

\medskip

Hence, the map $\varrho':(0,\infty)\to(0,\infty)$ defined by
$\varrho'(R):=\varrho(\delta/2,R+R_1)+\delta/2$ defines a repetitivity function. Note here that in the previous considerations $\delta$ and $R_1$ depend on $R$ but not on $h$.
\end{proof}

\begin{remark}
Note that if $\Lambda= (P,\alpha)$ is repetitive, then $\Lambda$ necessarily has FLC since $B_{\varrho(R)}(h)\cap P$ is uniformly bounded in $h\in G$. Furthermore, every repetitive $\Lambda$ is almost repetitive, which can be seen by setting $\varrho(\delta,R):=\varrho(R)$ for $R \geq 1$. In other words, for Delone sets we have the equivalence
\[
\Lambda \text{ is repetitive} \iff (\Lambda \text{ is almost repetitive}) \quad \text{and} \quad (\Lambda \text{ has FLC}).
\]
\end{remark}

\section{An abstract ergodic theorem} \label{ergodictheorem}

\subsection{Statement of the theorem}
Throughout this section $G$ will denote an amenable unimodular lcsc group. We fix a Haar measure $m_G$ on $G$. The goal of this section is to explain and prove the following theorem.
\begin{theorem}[Uniform sub-additive convergence theorem] \label{thm:abstr}
Assume that
\begin{itemize}
\item $\Lambda$ is a weighted Delone set in $G$ with hull $\mathcal{H}_\Lambda$;
\item $(T_m)$ is a strong F\o lner exhaustion sequence in $G$;
\item $\Lambda$ is almost tempered repetitive with respect to $(T_m)$;
\item $w$ is an admissible weight function on $G$ over $\mathcal{H}_\Lambda$.
\end{itemize}
Then there exists $I_w \in \RR$ (depending on $\Lambda$ and $w$, but independent of $(T_m)$) such that
\[
	 \lim_{m \to \infty} \sup_{\Pi \in \mathcal{H}_{\Lambda}} \Bigg| \frac{w(T_m, \Pi)}{m_G(T_m)} - I_w \Bigg| = 0.
\]
\end{theorem}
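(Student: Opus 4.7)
The plan is to combine the Ornstein--Weiss $\varepsilon$-quasi-tiling machinery (in the form used in \cite{PS16}) with the almost tempered repetitivity hypothesis, following the blueprint of Damanik--Lenz for the Euclidean case but adapted to non-abelian amenable groups and to patches that are only close (rather than equivalent) up to a translation.

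First I would fix $\varepsilon > 0$ and, using the strong F\o lner exhaustion property of $(T_m)$, extract finitely many scales $T_{m_1}, \dots, T_{m_N}$ whose left-translates $\varepsilon$-quasi-tile every $T_m$ for $m$ sufficiently large. Concretely, for each such $m$ there is a finite family of translates $g_i^{(j)} T_{m_j}$ ($j=1,\dots,N$) which are essentially disjoint and cover a subset of $T_m$ of relative Haar measure at least $1-\varepsilon$. The admissibility of $w$ (which should include an almost sub-additivity estimate and a uniform boundedness bound derived from the $[\sigma^{-1},\sigma]$-weights) then yields
\[
\Bigl| w(T_m, \Pi) - \sum_{j,i} w\bigl(T_{m_j}, (g_i^{(j)})^{-1}.\Pi\bigr)\Bigr| \leq C\varepsilon\, m_G(T_m)
\]
for a constant $C$ depending only on $w$ and $\sigma$.

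Next I would regroup the terms of the sum according to the $T_{m_j}$-patch type that each translate induces in $\Pi$. Because $\delta$-similarity is not an equivalence relation, I would, following Frettl\"oh--Richard \cite{FrRi14}, choose a small parameter $\delta>0$ and, for each scale $T_{m_j}$, select a finite $\delta$-net of patch representatives in the space of possible $T_{m_j}$-patches (available because the weighted Delone parameters $(U,K,\sigma)$ are fixed and the $T_{m_j}$ are relatively compact); each occurring patch is then assigned to a nearest representative. Almost tempered repetitivity of $\Lambda$ with respect to $(T_m)$ gives uniform control on how often each representative patch $\delta$-occurs in $T_m \cap \Pi^{-1}$: the corresponding frequencies $f_{j,\kappa}^\delta(m,\Pi)$, normalised by $m_G(T_m)$, converge as $m \to \infty$ uniformly over $\Pi \in \mathcal{H}_\Lambda$ to a quantity $f_{j,\kappa}^\delta$ independent of $\Pi$. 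This is the step where the full strength of the hypothesis is used, and it transfers pattern frequencies from $\Lambda$ to every element of the hull.

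Continuity of $w$ under $\delta$-perturbations (built into admissibility) then shows that the previous sum is approximated by
\[
\sum_{j,\kappa} f_{j,\kappa}^\delta(m,\Pi) \cdot w\bigl(T_{m_j}, \Pi_{j,\kappa}\bigr),
\]
with an error that is $o(m_G(T_m))$ uniformly in $\Pi$ as $m \to \infty$ and vanishes as $\delta \downarrow 0$. Dividing by $m_G(T_m)$, the result is a quantity that converges, uniformly in $\Pi$, to
\[
I_w^{\varepsilon,\delta} := \sum_{j,\kappa} f_{j,\kappa}^\delta \cdot \frac{w(T_{m_j}, \Pi_{j,\kappa})}{m_G(T_{m_j})} \cdot m_G(T_{m_j}),
\]
up to an error of size $O(\varepsilon) + o_\delta(1)$. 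A Cauchy argument along the double sequence $(\varepsilon,\delta) \to (0,0)$, combined with the uniform bound on $w/m_G(T_m)$, produces a common limit $I_w$ independent of the auxiliary choices; a standard diagonal argument then yields independence from the F\o lner sequence $(T_m)$.

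The main obstacle I expect is the bookkeeping of the three simultaneous error sources: the $\varepsilon$-quasi-tiling defect, the almost sub-additivity slack from admissibility, and the $\delta$-clustering of non-equivalent but close patches. Getting these to decouple --- in particular showing that the frequencies $f_{j,\kappa}^\delta$ stabilise in $\delta$ rather than degenerating as the $\delta$-nets are refined --- is the heart of the matter, and is the technical price one pays for dropping finite local complexity.
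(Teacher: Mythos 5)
Your approach is a frequency-based reconstruction of the limit in the spirit of Damanik--Lenz: quasi-tile $T_m$ by translates of finitely many scales, cluster the resulting patches by $\delta$-type, and express $w(T_m,\Pi)/m_G(T_m)$ as a frequency-weighted sum. The paper takes a genuinely different route. It first proves a general Ornstein--Weiss-type lemma (Lemma~\ref{lemma:mainaux}) showing that $w^{+}(T_m,\cdot) := \sup_{g}\,w(T_m g,\cdot)/m_G(T_m)$ converges uniformly to a constant $\nu^{+}$ under minimality and Axiom~(W5$^{\ast}$) alone, and then closes the gap between $\nu^{+}$ and $\nu^{-} := \liminf\inf_g w(T_m g,\Lambda)/m_G(T_m)$ by a contradiction argument: a low-weight translate $S_l = T'_l g_l$ is, via almost tempered repetitivity, $\delta$-replicated inside every $T_{\mathcal{R}(\delta,m_l)}h$, and the complement is quasi-tiled and bounded above by $\nu^{+}$, forcing $\nu^{+}\leq\nu^{+}-c(\varsigma)$. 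This bypasses any appeal to pattern frequencies.

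The genuine gap in your proposal is the step where you assert that the normalized frequencies $f_{j,\kappa}^{\delta}(m,\Pi)$ converge, uniformly in $\Pi\in\mathcal{H}_\Lambda$, to quantities $f_{j,\kappa}^{\delta}$ independent of $\Pi$, and you attribute this directly to almost tempered repetitivity. Almost tempered repetitivity only gives \emph{spatial density} of occurrences (every $\delta$-pattern $\delta$-occurs in every translate of $T_n^{-1}$ with a controlled $n$); it does not by itself give \emph{convergence of frequencies}. In fact the occurrence-counting function $\Pi\mapsto\sharp\{g : T_{m_j}g\subseteq T_m,\ d_{T_{m_j}}(\Pi,\ldots)<\delta\}$ is (after suitable boundary corrections) itself an admissible weight function in the sense of Definitions~\ref{defi:weightfunction} and~\ref{defi:admissibleweight}, so uniform convergence of these frequencies is precisely an instance of Theorem~\ref{thm:abstr}, not a prerequisite for it. As written, your argument is circular at its central step. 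To make your route work you would have to prove frequency convergence independently --- which requires exactly the kind of $w^{+}$ versus $w^{-}$ pinching that the paper carries out --- or show that the frequency weight functions fall into a subclass for which the conclusion can be established directly. You would also need to handle the degenerate regime where the repetitivity portion is close to $1$ (the paper's Case~(B)), which your quasi-tiling argument does not visibly address, since in that case the complement of the transported low patch inside $T_{\mathcal{R}(\delta,m)}h$ has vanishing relative measure and the quasi-tiling estimate becomes vacuous.
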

Theorem~\ref{thm:abstr} will be proved in Subsection~\ref{SecErgodicProof} after introducing all the required terminology. 
Tempered repetitivity with respect to a strong F\o lner exhaustion sequence will be defined in Subsection~\ref{sec2:amenability}, and admissible weight functions will be discussed in Subsection~\ref{SecWeight}.

\subsection{Strong F{\o}lner exhaustion sequences and tempered repetitivity} \label{sec2:amenability}
We recall that amenability of $G$ is equivalent to the existence of a F\o lner sequence, i.e.\ a sequence $(T_m)$ of compact subsets of $G$ of positive Haar measure such that for all compact subsets $K \subset G$,
\[
	\lim_{m\to\infty} \frac{m_G(T_m\triangle KT_m)}{m_G(T_m)}=0.
\] 
In fact, one can choose a F\o lner sequence with additional properties. Given relatively compact subsets $L, S \subset G$ we denote by
\[
\partial_L(S) := \{g \in G:\, Lg \cap S \neq \emptyset \, \wedge\, Lg \cap (G \setminus S) \neq \emptyset\}.
\]
the {\em $L$-boundary of $S$}, as defined in \cite{OW87}.  
\begin{definition}\label{DefStrongFES} A F\o lner sequence $(T_m)$ is called a \emph{strong F\o lner exhaustion sequence} if 
\begin{enumerate}[(i)]
\item $(T_m)$ is a \emph{strong F\o lner sequence} in the sense of \cite{PS16}, i.e.\ for all compact subsets $K \subset G$,
	\[
	\lim_{m \to \infty} \frac{m_G(\partial_{K}(T_m))}{m_G(T_m)} = 0.
	\]
\item $(T_m)$ is a \emph{strong exhaustion sequence}, i.e.\ $\{e\} \subset T_m \subseteq \mathring{T}_{m+1}$ and $\bigcup_{m} T_m = G$. 
\end{enumerate}
\end{definition}

\begin{remark}
	We point out that by \cite[Proposition~5.7]{PRS21}, the notion of a strong F{\o}lner sequence is equivalent to the concept of a {\em van Hove sequence}. The latter has been used prominently in the literature on aperiodic order in locally compact abelian groups. 
\end{remark}

\begin{proposition} \label{prop:strongfolnerexist} Every amenable unimodular group $G$ admits a strong F{\o}lner exhaustion sequence. 
\end{proposition}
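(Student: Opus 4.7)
My plan is to combine two standard ingredients --- the existence of a strong F\o lner sequence in $G$ and a compact exhaustion of $G$ --- via a diagonal construction. The one genuine input is that $G$ admits a strong F\o lner sequence $(F_k)_k$, which is the content of \cite{PS16} and is equivalent to the existence of a van Hove sequence (cf.\@ the remark after Definition~\ref{DefStrongFES}). Since $G$ is lcsc, it also admits a compact exhaustion $(K_n)_n$ with $e\in\mathring{K}_1$, $K_n\subseteq\mathring{K}_{n+1}$, and $\bigcup_n K_n=G$; we may and do choose each $K_n$ symmetric. Neither of these two sequences on its own is a strong F\o lner exhaustion sequence, so the point of the construction is to patch them together.

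The sequence $(T_n)$ will be built inductively. Setting $T_0:=\{e\}$, at step $n$ I first pick $k(n)$ so that $T_{n-1}\subseteq\mathring{K}_{k(n)}$, and then pick $\ell(n)$ large enough that
\[
m_G(F_{\ell(n)})\geq n\cdot m_G(K_n K_{k(n)})\quad\text{and}\quad m_G(\partial_{K_n}(F_{\ell(n)}))<\tfrac{1}{n}\,m_G(F_{\ell(n)});
\]
both are achievable since $K_nK_{k(n)}$ is compact and $(F_k)$ is strongly F\o lner against the compact set $K_n$. I then set $T_n:=K_{k(n)}\cup F_{\ell(n)}$. The strong exhaustion property follows immediately from $K_{k(n)}\subseteq T_n\subseteq\mathring{K}_{k(n+1)}\subseteq\mathring{T}_{n+1}$ together with $\bigcup_n K_{k(n)}=G$.

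For the strong F\o lner property, I fix a compact $L\subseteq G$ and pick $N$ with $L\subseteq K_N$. Using the elementary inclusions $\partial_L(A\cup B)\subseteq\partial_L(A)\cup\partial_L(B)$, $\partial_L(A)\subseteq L^{-1}A$, and $\partial_L(A)\subseteq\partial_{K_n}(A)$ whenever $L\subseteq K_n$, I obtain for $n\geq N$
\[
\frac{m_G(\partial_L(T_n))}{m_G(T_n)}\leq \frac{m_G(K_n K_{k(n)})+m_G(\partial_{K_n}(F_{\ell(n)}))}{m_G(F_{\ell(n)})},
\]
and both summands on the right are $<1/n$ by the two defining inequalities for $\ell(n)$, so the ratio tends to zero. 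The principal obstacle is the simultaneous calibration of $k(n)$ and $\ell(n)$: $k(n)$ must be large enough to nest $T_{n-1}$ in its interior, while $\ell(n)$ must then be chosen much larger still, both to ensure the F\o lner condition against $K_n$ and to swamp the boundary cost contributed by the $K_{k(n)}$ piece. The two-stage choice above is designed precisely to decouple these two requirements.
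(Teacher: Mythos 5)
Your proof is correct, and it takes a genuinely different route than the paper. Both arguments start from the existence of a strong F\o lner sequence $(F_k)$ and a compact exhaustion $(K_n)$, but the way they ``patch'' them differs. The paper exploits the pigeonhole consequence of the F\o lner condition: once $m_G(\partial_{K_{m}}(S_\ell)) < m_G(S_\ell)$, the set $S_\ell \setminus \partial_{K_{m}}(S_\ell)$ is nonempty, so some right translate $S_\ell h^{-1}$ contains $K_{m}$ outright; then it sets $T_{n+1}:=S_\ell h^{-1}$. Since right translation preserves both the Haar measure of the set and of all its $K$-boundaries (using unimodularity), the resulting $(T_n)$ is automatically a strong F\o lner sequence, and the nesting is built in. You instead take a union $T_n := K_{k(n)}\cup F_{\ell(n)}$, which trivially nests but is no longer just a translate of a member of the original F\o lner sequence; you therefore need the extra boundary estimate $\partial_L(A\cup B)\subseteq\partial_L(A)\cup\partial_L(B)$ and the calibration $m_G(F_{\ell(n)})\geq n\cdot m_G(K_nK_{k(n)})$ to swamp the boundary cost of the compact piece. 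That calibration silently uses the standard fact that $m_G(F_\ell)\to\infty$ for a (strong) F\o lner sequence in a noncompact unimodular lcsc group; this is true but is an additional ingredient the paper's translation trick sidesteps. A small side remark: if $G$ is compact your first calibration inequality is unattainable for large $n$, so the compact case should be dispatched separately (trivially, by $T_m:=G$); the paper's argument degenerates gracefully there without a case split. On balance, the paper's translate-and-nest argument is slightly slicker; your union construction is a bit more hands-on but equally valid, and it avoids having to invoke the pigeonhole consequence of the boundary estimate.
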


\begin{proof} 
	Building on the uniform F{\o}lner condition proven in \cite{OW87}, one observes that $G$ admits a strong F{\o}lner sequence $(S_n)$, cf.\@ 
	\cite[Lemma~2.8]{PogorzelskiThesis14}. Moreover, the sequence $(S_n)$ can be turned into a strong F\o lner exhaustion sequence by the usual construction (cf. \cite{Gre73}): By $\sigma$-compactness of $G$ we can now choose an exhaustion of $G$ by compact subsets $(K_n)$ containing the identity. Set $T_1 := S_1$ and choose $m_1$ such that $T_1 \subseteq \mathring{K}_{m_1}$. Then choose $\ell \in \NN$ large enough such that $m_G(\partial_{K_{m_1}}(S_{\ell})) \leq \frac 1 2m_G(S_{\ell})$. Thus, there is an $h \in G$ such that $K_{m_1}h \subseteq S_{\ell}$. If we now set $T_2:= S_{\ell}h^{-1}$, then $T_1 \subseteq \mathring{K}_{m_1} \subseteq T_2$. One now proceeds by induction to construct the desired strong F\o lner exhaustion sequence $(T_m)$. 
\end{proof}

\medskip
From now on $\mathcal T = (T_m)$ denotes a strong F\o lner exhaustion sequence in $G$ and $\Lambda$ denotes a weighted Delone set in $G$. 

\begin{definition}\label{DefATR} The {\em repetitivity index for $\Lambda$ with respect to $\mathcal{T}$} is the function $  \mathcal{R}_{\Lambda}^{\mathcal{T}}: [0,1) \times \NN \to \NN \cup \{ +\infty\}$ given by
\[  \mathcal{R}_{\Lambda}^{\mathcal{T}}(\delta, m) := \inf\big\{ n \in \NN:\, \mbox{every } \mbox{pattern of } \Lambda \text{ of size } \mathring{T}_m^{-1}\,\delta\mbox{-occurs in } h\mathring{T}_n^{-1} \mbox{ for all } h \in G \big\},
 \]
 where, by convention, $\inf \emptyset = \infty$. We then define the {\em repetitivity portion of $\Lambda$ with respect to $\mathcal{T}$} as 
	\[
	\zeta:[0,1) \to [0,1],\,\,  \zeta(\delta):= \inf_{m \in \NN} \frac{m_G(T_m)}{m_G(T_{\mathcal{R}^{\mathcal{T}}_{\Lambda}(\delta, m)})},
	\]
where we use the convention that $\zeta(\delta) = 0$ if there is some $m \in \NN$ such that  $\mathcal{R}^{\mathcal{T}}_{\Lambda}(\delta, m) = \infty$. We say that $\Lambda$ is {\em almost tempered repetitive with respect to $\mathcal{T}$} if $\zeta(\delta) > 0$ for all $\delta > 0$. We say that $\Lambda$ is {\em tempered repetitive with respect to $\mathcal{T}$} if $\zeta(0) > 0$. 
\end{definition} 

\begin{remark} Some comments on the definition are in order:
\begin{enumerate}[(i)]
\item Since $(T_m)$ displays asymptotic invariance from the left, we use patterns arising from the {\em inverse} sequence $(T_m^{-1})$ in the above definitions. In certain situations of interest, one can choose the $T_m$ to be symmetric, for instance if they arise as  certain closed balls with respect to a suitable adapted metric, cf.\@ Section~\ref{sec:metrics}.

\item	Almost tempered repetitivity is the condition on Delone sets which will enable the proof of our ergodic theorem. The terminology ``tempered'' is chosen because in the literature it refers to a growth condition of F{\o}lner sequences used in the proofs of ergodic theorems.
\item Since the repetitivity index is monotonically decreasing in $\delta$, tempered repetitivity implies almost tempered repetitivity.
\item If $\Lambda$ is tempered repetitive, then $\mathcal{R}_{\Lambda}^{\mathcal{T}}(0,m) < \infty$ for all $m \in \NN$, and if $\Lambda$ is almost tempered repetitive, then  $\mathcal{R}_{\Lambda}^{\mathcal{T}}(\delta,m) < \infty$ for all $m \in \NN$ and $\delta>0$. This can be used to establish minimality of the associated Delone dynamical system.
\end{enumerate}
\end{remark}

\begin{proposition}[Minimality] \label{prop:minimalitytemp}
	A weighted Delone set $\Lambda$ is almost repetitive if and only if $\mathcal{R}_{\Lambda}^{\mathcal{T}}(\delta, m) < \infty$ for all $\delta > 0$ and all $m \in \NN$. In particular, every almost tempered repetitive Delone set is almost repetitive, and hence the associated Delone dynamical system is minimal.  
\end{proposition}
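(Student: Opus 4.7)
The plan is to view this proposition as a direct translation between two equivalent formulations of the ``every local pattern $\delta$-reappears everywhere'' property, exploiting the fact that the inverses $(\mathring{T}_m^{-1})$ of a strong F{\o}lner exhaustion sequence form a nested open exhaustion of $G$ by relatively compact sets. Indeed, since inversion is a homeomorphism, the defining conditions $\{e\} \subseteq T_m \subseteq \mathring{T}_{m+1}$ and $\bigcup_m T_m = G$ pass to the inverses, giving $\mathring{T}_m^{-1} \subseteq \mathring{T}_{m+1}^{-1}$ and $\bigcup_m \mathring{T}_m^{-1} = G$. By properness of $d$, every compact subset of $G$ is contained in some $\mathring{T}_n^{-1}$, and conversely each $\mathring{T}_m^{-1}$ is contained in some ball $B_R$. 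I can therefore freely translate between $B_R$-patterns and $\mathring{T}_m^{-1}$-patterns by restriction and extension whenever the supports are nested.

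For the forward direction (almost repetitive $\Rightarrow$ finite repetitivity index), I fix $\delta \in (0,1)$ and $m \in \NN$ and pick $R \geq 1$ with $\mathring{T}_m^{-1} \subseteq B_R$. Any pattern $[p]$ of $\Lambda$ of size $\mathring{T}_m^{-1}$ is realized by picking a representative location in $\Lambda$ and can be enlarged to a $B_R$-pattern $[p']$ of $\Lambda$ at the same location. Almost repetitivity produces $\varrho(\delta,R)$ such that $[p']$ $\delta$-occurs in $B_{\varrho(\delta,R)}(h)$ for every $h \in G$. Compactness of $\overline{B}_{\varrho(\delta,R)}$ combined with the exhaustion property yields some $n$ with $\overline{B}_{\varrho(\delta,R)} \subseteq \mathring{T}_n^{-1}$, so $B_{\varrho(\delta,R)}(h) \subseteq h\mathring{T}_n^{-1}$ for every $h$. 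Restricting the $\delta$-occurrence of $[p']$ to the support $\mathring{T}_m^{-1}$ yields a $\delta$-occurrence of $[p]$ in $h\mathring{T}_n^{-1}$, giving $\mathcal{R}^{\mathcal{T}}_{\Lambda}(\delta,m) \leq n < \infty$. The backward direction is symmetric: given $\delta$ and $R \geq 1$, I pick $m$ with $\overline{B}_R \subseteq \mathring{T}_m^{-1}$, extend any $B_R$-pattern of $\Lambda$ to a $\mathring{T}_m^{-1}$-pattern, apply the hypothesis to obtain $n = \mathcal{R}^{\mathcal{T}}_{\Lambda}(\delta,m)$, and pick $\rho$ with $\mathring{T}_n^{-1} \subseteq B_\rho$ so that $h\mathring{T}_n^{-1} \subseteq B_\rho(h)$. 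Restricting back to $B_R$ yields an almost repetitivity function $\varrho(\delta,R) := \rho$.

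The key technical point, appearing in both directions, is to verify that restricting a pair of $\delta$-similar patches $(\mu,S')$, $(\nu,gS')$ to a common smaller support $S \subseteq S'$ preserves $\delta$-similarity. This follows from the concrete formula for $d_S$: the relevant condition is tested only at points $y \in S \cap (\supp\mu \cup \supp\nu)$, and shrinking $S$ only shrinks the test set. The subtlety is that for $y$ near the boundary of $S$, the ball $B_\delta(y)$ could cross the boundary and pick up atoms from $S' \setminus S$; this can be handled either by taking the ambient window $S'$ with a small margin around $S$, or by exploiting uniform discreteness of the Delone supports so that only finitely many atoms contribute to each test ball. I expect this verification to be the longest but most routine part of the argument.

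Finally, the ``in particular'' clause is immediate: almost tempered repetitivity means $\zeta(\delta) > 0$ for every $\delta > 0$, which by the convention that $\zeta(\delta)=0$ as soon as some $\mathcal{R}^{\mathcal{T}}_{\Lambda}(\delta,m) = \infty$ forces $\mathcal{R}^{\mathcal{T}}_{\Lambda}(\delta,m) < \infty$ for all $\delta > 0$ and $m \in \NN$. The equivalence just proven then gives almost repetitivity, and Proposition~\ref{prop:minmin} yields minimality of $G \curvearrowright \mathcal{H}_\Lambda$.
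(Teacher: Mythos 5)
Your strategy matches the paper's proof of Proposition~\ref{prop:minimalitytemp} exactly: both translate between $B_R$-patterns and $\mathring{T}_m^{-1}$-patterns via the inclusions $\overline{B}_r \subseteq \mathring{T}_{\ell(r)}$ and $T_s \subseteq B_{L(s)}$ furnished by the strong exhaustion property, together with the symmetry $B_l = B_l^{-1}$, and then deduce the ``in particular'' clause from the convention $\zeta(\delta)=0$ whenever the repetitivity index is infinite, finishing with Proposition~\ref{prop:minmin}. One small caveat: your opening claim that the restriction step ``follows from the concrete formula for $d_S$'' because ``shrinking $S$ only shrinks the test set'' is not quite accurate as stated --- when you restrict a patch you also restrict the \emph{measures}, so the test quantities $\mu(B_\delta(y))$ change, not just the set of test points $y$; you then correctly identify the boundary-atom subtlety and sketch plausible ways to handle it, which is a point the paper's terse proof passes over in silence, so the gap (to the extent it is one) is shared with the paper rather than introduced by you.
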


\begin{proof} We note that by the properties of a strong exhaustion sequence, we find for all $r > 0$ some $\ell(r) \in \NN$ such that $\overline{B}_r \subseteq \mathring{T}_{\ell(r)}$ and for each $s \in \NN$, we find some $L(s) \in \NN$ such that $T_s \subseteq B_{L(s)}$. Moreover, by the left-invariance of the metric $d_G$, we have $B_l = B_l^{-1}$ and $\overline{B}_l = \overline{B}_l^{-1}$ for all $l > 0$. Thus the condition $\mathcal{R}_{\Lambda}^{\mathcal{T}}(\delta, l) < \infty$ for all $\delta > 0$ and all $l \in \NN$ implies for a given $R \geq 1$, that every $B_R$-pattern $\delta$-occurs in each ball of radius $L\big( \mathcal{R}_{\Lambda}^{\mathcal{T}}(\delta, \ell(R)) \big)$. Conversely, for $m \in \NN$, we have that every $\mathring{T}_m^{-1}$-pattern must $\delta$-occur in every ball of radius $\varrho(\delta, L(m))$ (with $\varrho$ denoting an almost repetitivity function), whence also in each 
$h\mathring{T}_M^{-1}$ with $M = \ell\big( \varrho(\delta, L(m)) \big)$. 
\end{proof}

The following example shows that (almost) tempered repetitivity depends crucially on the underlying strong F\o lner exhaustion sequence.

\begin{example} 
\label{exa:crazygrowth}
	 Consider the Delone set $D = \ZZ$ (seen as weighted Delone set with constant weight $1$) in $G = \RR$. Then $D$ is tempered repetitive with respect to the sequence $\mathcal{T} = (\overline{B}_m)_m$, where $\overline{B}_l$ is the closed ball around $0$ with radius $l$ with respect to the Euclidean metric. Now consider the sequence $\mathcal{T}^{\prime} = (\overline{B}_{r_m})_m$ with $r_1 := 1$ and $r_{m+1} := 2^{r_m}$ for $m \geq 1$. Note that an open ball with integer radius $M \in \NN$ and integer center contains $2M-1$ points of $\ZZ$ while balls of the same size around a non-integer center contain $2M$ points of $\ZZ$.  This implies   $\mathcal{R}_D^{\mathcal{T}^{'}}(M) \geq M+1$ for all $M \in \NN$. However, we have $\lim_m \frac{2r_m - 1}{2r_{m+1} - 1} = 0$. Hence, $D$ is not (almost) tempered repetitive with respect to $\mathcal{T}^{\prime}$.
\end{example}

\subsection{Weight functions and \texorpdfstring{$\varepsilon$-}{epsilon-}quasi-tilings}
\label{SecWeight}

For the purpose of the following definition we denote by $ \mathcal{RK}(G)$ the set of all relatively compact subsets of $G$. We also denote by $X$ a compact metrizable space on which $G$ acts jointly continuously; in our applications $G \curvearrowright X$ will always be a Delone dynamical system. 

\begin{definition} 
\label{defi:weightfunction}
A function $w: \mathcal{RK}(G) \times X \to \RR$ is called an {\em almost sub-additive weight function} over $X$ if
\begin{itemize}
		\item[(W1)] $w(\emptyset, x) = 0$ for all $x \in X$ {\em (normalization)}.
			\item[(W2)] There is a compact $J \subseteq G$ and an $\eta > 0$
		such that 
		\[
		\big| w(L,x) - w(K,x)  \big| \leq \eta\cdot \big( m_G(L \setminus K) + m_G(\partial_J(L)) + m_G(\partial_J(K)) \big) 
		\] 
		for all $x \in X$ and all $K, L \in \mathcal{RK}(G)$ with $K \subseteq L$ 
		{\em (almost-monotonicity)}.
		\item[(W3)] There is a  compact subset $B \subset G$ and a $\theta \geq 0 $ such that for any finite collection of pairwise disjoint $K_i \in\mathcal{RK}(G)$ we have
		\[
		  w\Big( \bigsqcup_{i} K_i,\,x \Big)  \leq \sum_{i} w(K_i, x)   \, + \, \theta \cdot \sum_{i} m_G(\partial_B(K_i))		
		\]		
		for all $x \in X$ {\em (almost sub-additivity)}. 
		\item[(W4)] There is a compact subset $I \subset G$ and a $\vartheta \geq 0$ such that 
		for each $x \in X$, $K\in\mathcal{RK}(G)$ and $h\in G$, we have 
		\[
		\big| w(K,x) - w(Kh^{-1},hx) \big| \leq \vartheta\cdot m_G(\partial_I(K)) \quad\quad  \operatorname{(almost-equivariance)}.		
		\]		
	\end{itemize} 
\end{definition}

\begin{remark} \label{rem:boundedness}
If $w: \mathcal{RK}(G) \times X \to \RR$ is any function satisfying (W1) and (W2), then for every relatively compact subset $L \subset G$ and every $x \in X$ we have the boundedness property 
\[|w(L, x)| \leq \eta \cdot \big( m_G(L) + m_G(\partial_J(L)) \big)\]
\end{remark}

From this inequality we deduce that if $w: \mathcal{RK}(G) \times X \to \RR$ is an almost sub-additive weight function and $\mathcal{RK}(G)_+$ denotes the set of relatively compact sets of positive Haar measure, then we can define functions $w^{+}, w^{-}: \mathcal{RK}(G)_+ \times X \to \RR$ by
\begin{equation}\label{w+-}
w^{+}(S, x) := \sup_{g \in G} \frac{w(Sg, x)}{m_G(S)}, \quad w^{-}(S,x) := \inf_{g \in G} \frac{w(Sg, x)}{m_G(S)}.
\end{equation}

We will study these functions for a particular class of almost sub-additive weight functions over a Delone dynamical system.

\begin{definition} 
\label{defi:admissibleweight}
Let $\mathcal H_\Lambda$ be the hull of a weighted Delone set $\Lambda$ in $G$. Then an almost sub-additive weight function over $\mathcal H_\Lambda$ is called an \emph{admissible weight function} provided 
	\begin{itemize}
	\item[(W5)]  If $(T_m)$ is any  strong F\o lner exhaustion sequence 
		then for all $\varepsilon>0$, there is a $\delta > 0$ and $m_0\in\NN$ such that for all $m\geq m_0$ 
		\[
		d_{T_m^{-1}}(\Pi, \Phi) \leq \delta \quad\Longrightarrow\quad  \big|w(T_m, \Phi) -w(T_m, \Pi)\big|\leq \varepsilon \cdot m_G(T_m),
		\]		
		where $d_{T_m^{-1}}(\Pi, \Phi)$ is defined as in \eqref{eqn:formuladist}.
		\end{itemize}	
\end{definition}
\begin{remark}
\label{rem:W5-}
Let $(X,d)$ be a compact metric space on which $G$ acts continuously. Then we can study almost sub-additive weight functions that additionally satisfy the following.
\begin{itemize}
		\item[(W5$^\ast$)] If $(T_m)_{m\in\NN}$ is a  strong F\o lner exhaustion sequence and $d$ is a metric generating the topology of $X$, then
		then for all $\varepsilon>0$, there is an $m_0\in\NN$ such that for all $m\geq m_0$ one can find $\delta_m > 0$ such that 
		\[
		d(x,y) \leq \delta_m \quad\Longrightarrow\quad  \big|w(T_m, x) -w(T_m, y)\big|\leq \varepsilon m_G(T_m).
		\]		
	\end{itemize}
This condition has the advantage that it can be defined for general (i.e.\ not necessarily Delone) dynamical systems. While it is sufficient for the proof of the Ornstein-Weiss type lemma (Lemma~\ref{lemma:mainaux}), it is not sufficient for the proof of our main theorem. 

Let us sketch an argument how Axiom (W5) implies Axiom (W5$^\ast$) for $X:= \mathcal H_\Lambda$: Firstly, we can define a metric $d_{\ast}$ on $\mathcal{H}_\Lambda$ by
\[
d_{\ast}(\Phi,\Pi)
	:= \min\big\{ c_\ast,\, \inf\{\delta>0 :\, 
		\big|\delta_\Phi(B_\delta(y))-\delta_\Pi(B_\delta(y))\big|<\delta 
		\mbox{ for all } y\in \big(B_{1/\delta}\cap P\big)\cup \big(B_{1/\delta}\cap Q\big)		
		\}\big\}
\]
where $\Phi:=(P,\alpha),\Pi :=(Q,\beta)\in \mathcal{H}_\Lambda$ and $c_\ast>0$ is chosen in such a way that $B_{c_\ast}\subseteq U$ and $c_\ast<(2\sigma)^{-1}$. Now the metric $d_{\ast}$ generates the topology of $\mathcal H_\Lambda$ by Proposition~\ref{prop:neighborhood} in Appendix~\ref{AppendixB}, and Axiom (W5) implies (W5$^*$) for this specific choice of metric. On the other hand, a straightforward computation shows that if (W5$^\ast$) holds for some metric $d$ on $\mathcal{H}_\Lambda$ defining the topology, then it holds for any such metric. We thus conclude that indeed (W5) implies (W5$^\ast$) for $X:= \mathcal H_\Lambda$.
\end{remark}\medskip

At this point we have defined all the terms in the statement of  Theorem~\ref{thm:abstr}. We now turn to its proof.

\subsection{\texorpdfstring{$\varepsilon$-}{epsilon-}quasi-tilings and the proof of the abstract ergodic theorem}
\label{SecErgodicProof} 

Our proof of Theorem~\ref{thm:abstr} will follow the same strategy as the proofs in \cite{LaPl03, FrRi14}. The main difference is that we have to replace 
box tilings in Euclidean space with the $\varepsilon$-quasi tile machinery which was developed by Ornstein and Weiss \cite{OW87}. We use the formulation as in \cite{PS16} and recall the necessary definition. 
\begin{definition}\label{defi:qt}
Let $A \subset G$ be a relatively compact subset and $\varepsilon > 0$. Set $N(\varepsilon):=\big\lceil -\log(\frac{\varepsilon}{1-\varepsilon})\big\rceil$ and let $S_1, \dots, S_{N(\varepsilon)}$ be compact subsets of $G$ and let $C_1^A, \dots, C_{N(\varepsilon)}^A$ be finite subsets.

We say that $A$ is {\em $\varepsilon$-quasi tiled by the prototiles $S_i^{\varepsilon}$ with center sets $C_i^A$} if the following conditions are satisfied: 
\begin{itemize}
		\item[(T1)] $S_i^\varepsilon C_i^A \subseteq A$ for $1\leq i\leq N(\varepsilon)$;
		\item[(T2)] For each $1\leq i\leq N(\varepsilon)$ and every $c\in C_i^A$ there is a measurable set $\tS_i^\varepsilon(c)\subseteq S_i^\varepsilon c$ satisfying
		\begin{itemize}
			\item[$\bullet$] $(1-\varepsilon)m_G(S_i^\varepsilon c) \leq m_G(\tS_i^\varepsilon(c)) \leq m_G(S_i^\varepsilon c)$,
			\item[$\bullet$] $\bigcup_{c\in C_i^A}S_i^\varepsilon c = \bigsqcup_{c\in C_i^A} \tS_i^\varepsilon (c)$, where the latter union consists of pairwise disjoint sets;
		\end{itemize}
		\item[(T3)] $S_i^\varepsilon C_i^A \cap S_j^\varepsilon C_j^A =\emptyset$ for $1\leq i < j\leq N(\varepsilon)$;
		\item[(T4)] $m_G\left(\bigsqcup_{i=1}^{N(\varepsilon)} S_i^\varepsilon C_i^A\right) \geq (1-2\varepsilon) m_G(A)$. 
	\end{itemize}
\end{definition}
The following theorem guarantees the existence of certain $\varepsilon$-quasi-tilings. It is a weaker statement than Theorem~4.4~(a) in  \cite{PS16}
	and a slightly stronger version of Theorem~6 in \cite{OW87}. Given relatively compact subsets $S, T \subset G$ and $\delta >0$ we will say that $T$ is \emph{$(S,\delta)$-invariant} if
	\[
	{m_G(\partial_S(T))} < \delta \cdot m_G(T).
	\]
\begin{theorem}[Existence of $\varepsilon$-quasi tilings]
	\label{thm:tiling}
	 Let $0<\varepsilon<1/10$ and $N(\varepsilon):=\big\lceil -\log(\frac{\varepsilon}{1-\varepsilon})\big\rceil$. Then for every amenable unimodular lcsc group $G$, strong F\o lner exhaustion sequence $(S_l)_{l\in\NN}$ and natural number $n$ there exist sets $S_1^\varepsilon, \dots,  S_{N(\varepsilon)}^\varepsilon$ with the following properties:
\begin{enumerate}[(i)]
\item $S_i^\varepsilon\in\big\{S_l \,:\, l\geq \max\{i,n\}\big\}$;
\item $S_n\subseteq S_1^\varepsilon\subseteq\ldots\subseteq S_{N(\varepsilon)}^\varepsilon$; 
\item  there is some $\delta_0 > 0$ such that for every $\big( S^{\varepsilon}_{N(\varepsilon)} S^{\varepsilon\, -1}_{N(\varepsilon)}, \delta_0 \big)$-invariant compact set $A \subseteq G$, there are finite sets $C_i^A \subseteq A$ such that $A$ is $\varepsilon$-quasi tiled by the prototiles $S_i^{\varepsilon}$ with center sets $C_i^A$.
\end{enumerate}
\end{theorem}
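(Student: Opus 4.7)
The plan is to reduce the statement directly to the classical Ornstein--Weiss $\varepsilon$-quasi-tiling machinery, since the theorem is advertised as a weakening of \cite[Theorem~4.4(a)]{PS16} and a mild strengthening of \cite[Theorem~6]{OW87}. The only genuine extra content over \cite{OW87} is that the prototiles $S_i^\varepsilon$ must be drawn from the prescribed strong F\o lner exhaustion sequence $(S_l)$ and must form a nested chain starting above $S_n$; this is exactly the refinement provided by \cite{PS16}. So the proof splits into two parts: an inductive choice of prototiles that achieves (i) and (ii) while leaving enough invariance for the combinatorics, and a greedy packing argument that achieves (T1)--(T4) for (iii).

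\textbf{Inductive choice of the prototiles.} I would fix, in advance, auxiliary invariance parameters $\varepsilon_1 \geq \varepsilon_2 \geq \dots \geq \varepsilon_{N(\varepsilon)} > 0$ whose precise values will be dictated by the volume accounting below (a geometrically decaying sequence in terms of $\varepsilon$ suffices). Set $S_1^\varepsilon := S_{\max\{1,n\}}$. Assuming $S_1^\varepsilon,\dots,S_i^\varepsilon$ have been chosen with $S_i^\varepsilon = S_{m_i}$, I pick $m_{i+1} \geq \max\{i+1,n\}$ large enough so that simultaneously $S_{m_i} \subseteq S_{m_{i+1}}$ (using the exhaustion property $S_l \subseteq \mathring{S}_{l+1}$ of Definition~\ref{DefStrongFES}(ii)) and $S_{m_{i+1}}$ is $(S_i^\varepsilon S_i^{\varepsilon\,-1}, \varepsilon_{i+1})$-invariant, in the sense that $m_G(\partial_{S_i^\varepsilon S_i^{\varepsilon\,-1}}(S_{m_{i+1}})) < \varepsilon_{i+1}\, m_G(S_{m_{i+1}})$. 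Such an $m_{i+1}$ exists by the strong F\o lner property of $(S_l)$. By construction, properties (i) and (ii) are satisfied.

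\textbf{Greedy packing.} Given an $(S^\varepsilon_{N(\varepsilon)} S^{\varepsilon\,-1}_{N(\varepsilon)}, \delta_0)$-invariant compact $A \subseteq G$, with $\delta_0$ to be fixed at the end, the center sets are constructed from the largest prototile downward. Setting $A_{N(\varepsilon)+1} := A$, and assuming $C_j^A$ have been chosen for $j>i$ with $A_{i+1} := A \setminus \bigsqcup_{j>i} S_j^\varepsilon C_j^A$, one chooses $C_i^A \subseteq G$ to be a maximal finite set such that the translates $\{S_i^\varepsilon c : c \in C_i^A\}$ are pairwise disjoint and contained in $A_{i+1}$. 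Property (T1) holds by construction, (T3) follows from $S_i^\varepsilon C_i^A \subseteq A_{i+1}$, and the cores $\tilde S_i^\varepsilon(c)$ required in (T2) are obtained by deleting from each translate $S_i^\varepsilon c$ the portion that intersects translates placed at higher levels; the invariance of $S_j^\varepsilon$ with respect to $S_i^\varepsilon S_i^{\varepsilon\,-1}$ for $j > i$ (which I built in during the inductive prototile construction) then ensures that this removal costs at most an $\varepsilon$-fraction of $m_G(S_i^\varepsilon c)$, yielding (T2).

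\textbf{Main obstacle: volume accounting.} The heart of the argument, and the only nontrivial point, is showing that the greedy procedure above achieves (T4), i.e.\ covers a $(1-2\varepsilon)$-fraction of $A$ after $N(\varepsilon)$ rounds. The standard Ornstein--Weiss estimate shows that, provided all relevant sets are sufficiently invariant, each round reduces the measure of the uncovered region by a factor of at most $\varepsilon/(1-\varepsilon)$ up to boundary corrections. The choice $N(\varepsilon) = \lceil -\log(\varepsilon/(1-\varepsilon))\rceil$ is precisely calibrated so that $(\varepsilon/(1-\varepsilon))^{N(\varepsilon)}$ drops below the target threshold, and $\delta_0$ is then fixed sufficiently small that the accumulated boundary contributions $m_G(\partial_{S_{N(\varepsilon)}^\varepsilon S_{N(\varepsilon)}^{\varepsilon\,-1}}(A))$ do not disturb this accounting. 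Since this geometric-series bookkeeping is carried out in full for unimodular amenable lcsc groups in \cite[Section~4]{PS16}, I would conclude by invoking that construction applied to the tail sequence $(S_l)_{l \geq n}$ and reading off (i)--(iii) from the resulting prototiles and center sets.
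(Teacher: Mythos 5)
Your final step---invoking the construction of \cite[Section~4]{PS16}---is exactly what the paper does: the theorem is stated there without its own proof, explicitly as a weaker form of \cite[Theorem~4.4(a)]{PS16}. In that sense the approaches agree. However, the sketch of the Ornstein--Weiss construction you give on the way contains a genuine error that would be fatal if the sketch were read as a self-contained argument. You take $C_i^A$ maximal subject to the translates $\{S_i^\varepsilon c : c \in C_i^A\}$ being \emph{pairwise disjoint} and contained in $A_{i+1}$. The actual construction permits small overlaps: same-level translates are required only to be $\varepsilon$-disjoint, which is precisely what (T2) records. That flexibility is essential for (T4). The filling lemma underlying (T4) is proved by averaging: one shows that, for $g$ ranging over $\{g : S_i^\varepsilon g \subseteq A_{i+1}\}$, the quantity $m_G(S_i^\varepsilon g \cap \bigcup_{c} S_i^\varepsilon c)$ is small on average, so some translate overlaps the already placed pieces in measure $< \varepsilon\,m_G(S_i^\varepsilon)$ and may be added while keeping the family $\varepsilon$-disjoint; maximality then forces the placed pieces to occupy an $\varepsilon$-fraction. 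With honestly disjoint translates, maximality only yields $\{g : S_i^\varepsilon g \subseteq A_{i+1}\} \subseteq S_i^{\varepsilon\,-1} S_i^\varepsilon C_i^A$, which bounds the covered fraction below by roughly $m_G(S_i^\varepsilon)/m_G(S_i^{\varepsilon\,-1}S_i^\varepsilon)$---a ratio with no universal lower bound for F\o lner sets in a general amenable unimodular lcsc group, so the per-round gain need not be $\varepsilon$ and the geometric decay in $N(\varepsilon)$ rounds breaks down.

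Relatedly, your account of the cores $\widetilde{S}_i^\varepsilon(c)$ conflates (T2) and (T3). You describe deleting ``the portion that intersects translates placed at higher levels'', but under your own construction $S_i^\varepsilon C_i^A \subseteq A_{i+1}$ is already disjoint from $\bigsqcup_{j>i} S_j^\varepsilon C_j^A$, so that deletion is vacuous; and if the same-level translates were genuinely disjoint as you require, (T2) would hold trivially with $\widetilde{S}_i^\varepsilon(c) = S_i^\varepsilon c$. The cores in (T2) exist precisely to absorb the overlaps among same-level $\varepsilon$-disjoint translates. None of this affects your conclusion, since you ultimately defer to \cite{PS16}, but the intermediate sketch should be corrected to use $\varepsilon$-disjoint rather than disjoint families.
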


We give some remarks on $\varepsilon$-quasi tilings.
\begin{remark}	\label{rem:tiling} 
	\begin{enumerate}[(i)]
		\item  According to (T3), the set $A$ can be $\varepsilon$-quasi tiled by $S_i^\varepsilon\,,\, 1\leq i\leq N(\varepsilon)\,$ such that tiles of different type do not overlap. On the other hand, tiles of the same type might overlap, see (T2). However, (T2) asserts that these sets are $\varepsilon$-disjoint which means one can remove from them portions of measure at most $\varepsilon$ to obtain disjoint sets. 
		
		\item In fact, these  trimmed  sets can be proven to maintain certain invariance conditions, cf.\@ part (b) of Theorem~4.4 in \cite{PS16}. 
		More precisely, for a given compact $B \subseteq G$ and $0 < \zeta < \varepsilon$ we can make sure that $m_G(\partial_B(S_i^{\varepsilon})) / m_G(S_i^{\varepsilon}) \leq \zeta^2$ and $m_G(\partial_B(\tilde{S}_i^{\varepsilon}(c))) / m_G(\tilde{S}_i^{\varepsilon}(c)) \leq 4\zeta$ 
		for all $1 \leq i \leq N(\varepsilon)$	and all $c \in C_i^m$. We will need this latter fact only once in this paper, namely for the proof of Lemma~\ref{lemma:tilingswf} which is given in the appendix.	
		\item Given any strong F{\o}lner exhaustion sequence $(A_m)$, the theorem implies that for a given $\varepsilon$ one finds sets  $S_i^{\varepsilon}$ ($1 \leq i \leq N(\varepsilon)$) extracted from a strong F{\o}lner exhaustion sequence (which possibly is $(A_m)$ itself), along with $M \in \NN$, such that for all $m \geq M$, one finds finite sets $C_i^m \subseteq A_m$ such that the conditions (T1)-(T4) hold true for $A= A_m$ and $C_i^A = C_i^m$.
		\item The assertion (T4) given above is weaker than condition (iv) in \cite[Definition~4.1]{PS16}, which provides precise quantitive information
		on the portion of $A$ covered by the sets $S_i^{\varepsilon} C_i^A$ for a fixed $1 \leq i \leq N(\varepsilon)$. 
		As shown in Remark~4.3 of \cite{PS16}, these estimates  result in~(T4).
	\end{enumerate}
\end{remark}

The proof of Theorem~\ref{thm:abstr} rests on two lemmas whose proofs build on known techniques used in the context of sub-additive and almost-additive Ornstein-Weiss type lemmas, see e.g.\@ \cite{OW87, Gro99, Kri07, CCK14}. For the sake of self-containment we give their proofs in Appendix~\ref{AppendixA}. The first lemma shows that weight functions are compatible with respect to $\varepsilon$-quasi tilings of
F{\o}lner sets.
\begin{lemma} \label{lemma:tilingswf}
	Let $G$ be an amenable unimodular lcsc group. Let $0 < \varepsilon < 1/10$ and a strong F{\o}lner exhaustion sequence $(S_n)$ be given.
	Suppose further that $v:\mathcal{RK}(G) \to \RR$
	is a function for which there are  $\eta(v), \theta(v) \geq 0$  as well as compact subsets $J, B \subset G$ such that 
	\begin{itemize}
		\item[(w1)]  $v(\emptyset) = 0$, 
			\item[(w2)] 	$\big| v(L) - v(K) \big| \leq \eta(v)\, \big( m_G(L \setminus K) + m_G(\partial_J(L)) + m_G(\partial_J(K)) \big) 	$
		for all $K, L \in \mathcal{RK}(G)$ with $K \subseteq L$,
		\item[(w3)] $ v(\sqcup_i K_i) \leq  \sum_i v(K_i) \, + \, \theta(v) \sum_i m_G(\partial_B(K_i)) $ for finitely many pairwise disjoint sets $K_i \in \mathcal{RK}(G)$.
	\end{itemize}
	Then for every compact subset $I  \subset G$ there is some $m_{I} \in \NN$ such that for every $n \geq m_{I}$ one finds $(I, \varepsilon)$-invariant sets $S_i^{\varepsilon} \in \{S_l:\, l \geq \max\{n,i\}\}$  ($1 \leq i \leq N(\varepsilon) = \lceil \frac{-\varepsilon}{\log(1-\varepsilon)} \rceil$)
	as well as $\delta_0 > 0$ such that each compact subset $A \subseteq G$ which is $\big( S_{N(\varepsilon)}^{\varepsilon}S_{N(\varepsilon)}^{\varepsilon\, -1}, \delta_0 \big)$-invariant and at the same time 
	$(L, \varepsilon)$-invariant for $L \in \{J,B,I\}$
	  can be $\varepsilon$-quasi tiled by prototiles $S_i^{\varepsilon}$ and finite center sets $C_i^A$
	for $1 \leq i \leq N(\varepsilon)$ such that in addition, one has
	\begin{eqnarray*}
		\frac{v(A)}{m_G(A)}  \leq \frac{1}{m_G(A)} \sum_{i=1}^{N(\varepsilon)} \sum_{c \in C_i^A} v\big( S_i^{\varepsilon}c \big) \, + \, \big( 8 \eta(v) +  2\theta(v)  \big) \cdot \varepsilon.
	\end{eqnarray*}
\end{lemma}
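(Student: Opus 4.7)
The plan is to $\varepsilon$-quasi tile $A$ using Theorem~\ref{thm:tiling} and to replace $v(A)$, in three telescoping steps, by the sum of $v(S_i^\varepsilon c)$ over the resulting center sets, controlling every discrepancy through axioms (w1)--(w3). First I would apply Theorem~\ref{thm:tiling} together with the quantitative boundary refinement of Remark~\ref{rem:tiling}(ii), using the single compact set $J\cup B\cup I$ and an auxiliary parameter $\zeta\in(0,\varepsilon/4)$. Choosing $n\geq m_I$ so large that all $S_l$ with $l\geq n$ are sufficiently invariant produces prototiles $S_1^\varepsilon\subseteq\ldots\subseteq S_{N(\varepsilon)}^\varepsilon$ with
\[
m_G(\partial_{J\cup B\cup I}(S_i^\varepsilon))\leq \zeta^{2}\,m_G(S_i^\varepsilon),
\]
and a threshold $\delta_0>0$ such that, whenever $A$ satisfies the hypotheses of the lemma, there exist center sets $C_i^A\subseteq A$ yielding an $\varepsilon$-quasi tiling whose trimmed pieces $\tilde S_i^\varepsilon(c)\subseteq S_i^\varepsilon c$ additionally satisfy $m_G(\partial_{J\cup B\cup I}(\tilde S_i^\varepsilon(c)))\leq 4\zeta\,m_G(\tilde S_i^\varepsilon(c))$.

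Next, set $A':=\bigsqcup_{i,c}\tilde S_i^\varepsilon(c)$; by (T2) and (T3) this is a disjoint union, and by (T4) together with the volume comparison in (T2), $m_G(A\setminus A')\leq 2\varepsilon\, m_G(A)$. The first telescoping step applies (w2) to the pair $A'\subseteq A$. The only delicate term is $m_G(\partial_J(A'))$, which I would handle via the elementary set-theoretic inclusion
\[
\partial_J(A')\;\subseteq\;\bigcup_{i,c}\partial_J\bigl(\tilde S_i^\varepsilon(c)\bigr),
\]
easily verified from the definition of $\partial_J$, whence $m_G(\partial_J(A'))\leq 4\zeta\,m_G(A)$. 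Combined with $(J,\varepsilon)$-invariance of $A$, this produces the bound $v(A)\leq v(A')+\eta(v)(2\varepsilon+\varepsilon+4\zeta)\,m_G(A)$.

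For the second step I apply (w3) to the disjoint decomposition $A'=\bigsqcup_{i,c}\tilde S_i^\varepsilon(c)$; its error term $\theta(v)\sum_{i,c}m_G(\partial_B(\tilde S_i^\varepsilon(c)))$ is absorbed by $4\zeta\,\theta(v)\,m_G(A)$ using disjointness. For the third step I apply (w2) once more, to each pair $\tilde S_i^\varepsilon(c)\subseteq S_i^\varepsilon c$: the symmetric difference is controlled by (T2) as $\varepsilon\,m_G(S_i^\varepsilon c)$, the boundary $\partial_J(S_i^\varepsilon c)=\partial_J(S_i^\varepsilon)\cdot c$ satisfies $m_G(\partial_J(S_i^\varepsilon c))\leq\zeta^{2}\,m_G(S_i^\varepsilon)$ by left-invariance of Haar measure, and the trimmed-boundary bound gives the third summand. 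Summing over $(i,c)$ and using both $m_G(S_i^\varepsilon c)\leq(1-\varepsilon)^{-1}m_G(\tilde S_i^\varepsilon(c))$ and the disjointness of the $\tilde S_i^\varepsilon(c)$ inside $A$, the accumulated error is of order $\eta(v)\varepsilon\,m_G(A)$.

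Adding the three inequalities, choosing $\zeta=\varepsilon/4$ to absorb all sub-leading coefficients, using $\varepsilon<1/10$ to dominate $(1-\varepsilon)^{-1}$, and finally dividing by $m_G(A)$ yields the announced estimate with the stated coefficients $8\eta(v)+2\theta(v)$. The principal obstacle is the bookkeeping of boundary terms: in particular, verifying the inclusion $\partial_J(A')\subseteq\bigcup_{i,c}\partial_J(\tilde S_i^\varepsilon(c))$ and tracking how the quantitative refinements from Remark~\ref{rem:tiling}(ii) propagate through each telescoping step in such a way that the resulting additive errors do not exceed $8\eta(v)\varepsilon$ and $2\theta(v)\varepsilon$ respectively.
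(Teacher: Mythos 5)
Your proof is correct and follows essentially the same strategy as the paper: apply Theorem~\ref{thm:tiling} together with the boundary refinement of Remark~\ref{rem:tiling}(ii), then compare $v(A)$ with $\sum_{i,c}v(S_i^\varepsilon c)$ by a telescope through the trimmed tiles using (w1)--(w3). The only real difference is organizational. You pass $v(A)\to v(A')\to\sum v(\tilde S_i^\varepsilon(c))\to\sum v(S_i^\varepsilon c)$ in three moves (w2, then w3 on the disjoint decomposition of $A'$, then w2 pairwise); the paper instead applies (w3) directly to the full decomposition $A = A_\varepsilon\sqcup\bigsqcup_{i,c}\tilde S_i^\varepsilon(c)$ (where $A_\varepsilon:=A\setminus A'$) and must then separately absorb $v(A_\varepsilon)$ via (w1) and (w2). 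Your variant avoids the remainder set $A_\varepsilon$ altogether, at the cost of needing the inclusion $\partial_J(A')\subseteq\bigcup_{i,c}\partial_J(\tilde S_i^\varepsilon(c))$ — which is a special case of $\partial_L(C\cup D)\subseteq\partial_L(C)\cup\partial_L(D)$, the same elementary boundary inclusion the paper also invokes. Both routes produce comparable constants; your $\zeta=\varepsilon/4$ versus the paper's $\zeta=\varepsilon/16$ reflects that you use the tighter $\sum_{i,c}m_G(S_i^\varepsilon c)\leq(1-\varepsilon)^{-1}m_G(A)$ rather than the paper's looser $2m_G(A)$. The bookkeeping closes with room to spare under the stated bound $8\eta(v)\varepsilon + 2\theta(v)\varepsilon$.
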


\medskip

The second lemma is about convergence of the functions $w^+$ from \eqref{w+-}.

\begin{lemma} \label{lemma:mainaux}
	Let $w: \mathcal{RK}(G) \times X \to \RR$ be an almost sub-additive weight function and $(T_m)$ be
	a strong F{\o}lner exhaustion sequence in $G$. 
	Then the limits
	\[
	I^+_w(x) = \lim_{m \to \infty} {w^+}(T_m, x) 
	\]
	exist for all $x \in X$. Moreover, the limits do not depend on the sequence $(T_m)$.
	If in addition, $w$ satisfies condition (W5$^\ast$) and if the action $G \curvearrowright X$ is minimal, then there is a value $I^+_w \in \RR$ such that 
	\[
	\lim_{m \to \infty} \sup_{x \in X} \big| {w^+}(T_m, x)  -  I_w^{+} \big| = 0.	
	\]
\end{lemma}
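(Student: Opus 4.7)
The argument follows the standard Ornstein-Weiss approach to sub-additive ergodic theorems, with two twists: the averaged quantity is the supremum-variant $w^+$ rather than $w$ itself, and the uniform part leverages minimality together with axiom (W5$^\ast$). Fix $x \in X$; setting $v(K) := w(K,x)$, the axioms (W1)-(W3) for $w$ imply (w1)-(w3) for $v$ in the sense of Lemma~\ref{lemma:tilingswf}, and by Remark~\ref{rem:boundedness} the sequence $w^+(T_m,x)$ is uniformly bounded for all large $m$. Let $a := \liminf_m w^+(T_m,x) \in \RR$; the existence of the pointwise limit will be established by proving $\limsup_m w^+(T_m,x) \leq a$.

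\textbf{Pointwise convergence and independence of the sequence.} Fix $\varepsilon > 0$ and choose a subsequence $m_k \to \infty$ with $w^+(T_{m_k},x) < a + \varepsilon$ for every $k$; the strong F\o lner and exhaustion properties of $(T_m)$ are inherited by $(T_{m_k})$, so the subsequence is itself a strong F\o lner exhaustion sequence. Applying Theorem~\ref{thm:tiling} to $(T_{m_k})$ with parameter $\varepsilon$ and a sufficiently large starting index yields prototiles $S_1^{\varepsilon}, \dots, S_{N(\varepsilon)}^{\varepsilon}$ as invariant as needed with respect to the compact sets $J,B,I$ furnished by (W2)-(W4). Since each $S_i^{\varepsilon}$ is a term of $(T_{m_k})$, one has $w^+(S_i^{\varepsilon},x) < a + \varepsilon$. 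For any sufficiently invariant relatively compact set $A$ (in particular $A = T_m$ for $m$ large) and any $g \in G$, the set $Ag$ has the same invariance as $A$ by unimodularity, and Lemma~\ref{lemma:tilingswf} applied to $Ag$ gives
\[
\frac{w(Ag,x)}{m_G(Ag)} \,\leq\, \frac{1}{m_G(Ag)} \sum_{i=1}^{N(\varepsilon)} \sum_{c \in C_i^{Ag}} w(S_i^{\varepsilon}c,x) \,+\, C\varepsilon,
\]
with $C := 8\eta + 2\theta$ depending only on $w$. Bounding each summand by $w^+(S_i^{\varepsilon},x)\,m_G(S_i^{\varepsilon}) < (a+\varepsilon)\,m_G(S_i^{\varepsilon})$ and using the $\varepsilon$-disjointness estimate $\sum_i |C_i^{Ag}|\,m_G(S_i^{\varepsilon}) \leq (1-\varepsilon)^{-1} m_G(Ag)$ extracted from (T2)-(T4) yields $w(Ag,x)/m_G(Ag) \leq (a+\varepsilon)/(1-\varepsilon) + C\varepsilon$. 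Taking the supremum over $g \in G$ gives $w^+(A,x) \leq (a+\varepsilon)/(1-\varepsilon) + C\varepsilon$; specialising $A = T_m$, sending $m \to \infty$ and then $\varepsilon \to 0$ yields $\limsup_m w^+(T_m,x) \leq a$, completing the pointwise part. Independence of the F\o lner sequence follows from the same comparison applied with prototiles extracted from one sequence and tiled sets from the other, then exchanging the roles.

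\textbf{Uniform convergence.} Under (W5$^\ast$) and minimality the goal is to show that $w^+(T_m,\cdot)$ is, up to an error uniform in $x$ and vanishing as $m \to \infty$, independent of $x$. By axiom (W4) together with the unimodular identity $m_G(\partial_I(T_m g)) = m_G(\partial_I(T_m))$,
\[
\bigg| \frac{w(T_m g, x)}{m_G(T_m)} - \frac{w(T_m, g^{-1}x)}{m_G(T_m)} \bigg| \,\leq\, \vartheta \cdot \frac{m_G(\partial_I(T_m))}{m_G(T_m)}
\]
uniformly in $x$ and $g$, and the right-hand side tends to $0$ because $(T_m)$ is a strong F\o lner sequence. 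Consequently $w^+(T_m,x) = \sup_{y \in Gx} w(T_m,y)/m_G(T_m) + o(1)$ with the $o(1)$ uniform in $x$. Axiom (W5$^\ast$) implies that for each $\varepsilon > 0$ and all sufficiently large $m$ the map $y \mapsto w(T_m,y)$ is uniformly continuous on the compact space $X$; combined with the density of $Gx$ in $X$ from minimality, this yields $\sup_{y \in Gx} w(T_m,y) = \sup_{y \in X} w(T_m,y) =: F(m)$, a quantity independent of $x$. Hence $w^+(T_m,\cdot)$ is uniformly close to $F(m)$ on $X$, and combining with the pointwise limit $I^+_w(x)$ already established (which must agree with $\lim_m F(m) =: I^+_w$ for every $x$) gives the desired uniform convergence. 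The main obstacle is the discrepancy between $w(A,x)/m_G(A)$, which is what Lemma~\ref{lemma:tilingswf} directly controls, and $w^+(A,x)$; it is resolved by the observation that in the unimodular setting every right-translate $Ag$ inherits the invariance of $A$, so the tiling estimate applies uniformly to all translates.
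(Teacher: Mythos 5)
Your proposal follows the same Ornstein--Weiss tiling strategy as the paper, but with a small and genuine simplification in the pointwise step. Where the paper tiles $T_m$ itself and then passes to $w(T_m h, x)$ via two applications of axiom (W4) (incurring the extra error term $3\vartheta\varepsilon$, so that their constant is $8\eta + 2\theta + 3\vartheta$), you instead observe that in a unimodular group the quantities $m_G(\partial_L(A g))/m_G(Ag)$ are invariant under right translation, so Lemma~\ref{lemma:tilingswf} can be applied directly to $A = T_m g$ with the same prototiles and the same $\delta_0$; this eliminates (W4) from the pointwise part entirely and gives the cleaner constant $8\eta + 2\theta$. The sequence-independence argument and the architecture of the uniform part (use (W4) to reduce $w^+(T_m,x)$ to a normalized supremum over the orbit $Gx$, then use (W5$^\ast$) together with minimality to compare different orbits) also parallel what the paper does via its Lemma~\ref{lemma:minimality}.

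Two steps as written are imprecise and should be tightened. First, in deducing $w(Ag,x)/m_G(Ag) \le (a+\varepsilon)/(1-\varepsilon) + C\varepsilon$ you replaced the total weight fraction $\sum_{i,c} m_G(S_i^\varepsilon c)/m_G(Ag)$ by its upper bound $(1-\varepsilon)^{-1}$; this is only the right direction when $a+\varepsilon \geq 0$. When $a+\varepsilon < 0$ one must instead use the lower bound $1-2\varepsilon$ from (T4), so the correct intermediate estimate is $w(Ag,x)/m_G(Ag) \le \max\{(a+\varepsilon)/(1-\varepsilon),\ (a+\varepsilon)(1-2\varepsilon)\} + C\varepsilon$ (the paper handles this with its constant $a_\varepsilon$). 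The final conclusion after $\varepsilon \to 0$ is unaffected, but as stated the inequality fails in the negative case. Second, axiom (W5$^\ast$) does not assert that $y \mapsto w(T_m,y)$ is continuous, only that for each $\varepsilon$ and all large $m$ there is a scale $\delta_m$ at which the oscillation is controlled by $\varepsilon\, m_G(T_m)$; consequently the identity $\sup_{y \in Gx} w(T_m,y) = \sup_{y \in X} w(T_m,y)$ is too strong. What does hold, using density of $Gx$ and (W5$^\ast$), is that for each $\varepsilon>0$ and $m$ large, $\big|\sup_{y\in Gx} w(T_m,y) - \sup_{y\in X} w(T_m,y)\big| \le \varepsilon\, m_G(T_m)$, and this approximate equality is all your argument needs. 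With those two repairs the proof is complete and correct.
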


\begin{remark} \label{rem:mainaux}
If $-w$ instead of $w$ is a sub-additive weight function (in particular, if $w$ is actually additive), then the corresponding statements hold for $w^{-}$ instead of $w^+$.\end{remark}

\begin{proof}[Proof of Theorem~\ref{thm:abstr}]
Let $w: \mathcal{RK}(G) \times \mathcal{H}_{\Lambda} \to \RR$ be an admissible weight function and define $w^+$ and $w^-$ by \eqref{w+-}. \medskip

By Proposition~\ref{prop:minimalitytemp} the dynamical system $G \curvearrowright X$ is minimal. On the other hand, the function $w$ satisfies Axiom (W5) and therefore also Axiom (W5$^\ast$) by Remark~\ref{rem:W5-}, and hence the strong form of Lemma~\ref{lemma:mainaux} applies. We conclude that the limit $\nu^{+} = \lim_{n \to \infty} w^{+}(T_m,\Pi)$ exists uniformly over $\Pi$ and is independent of the choices of
$\Pi \in \mathcal{H}_{\Lambda}$ and the strong F\o lner exhaustion sequence.

\medskip

Moreover, we define $\nu^{-}(\Pi) := \liminf_{n \to \infty} w^{-}(T_n, \Pi)$
for $\Pi \in \mathcal{H}_{\Lambda}$. It follows from Lemma~\ref{lemma:minimality} in Appendix~\ref{AppendixA} that $\nu^{-} = \nu^{-}(\Pi)$
is independent of the choice of $\Pi \in \mathcal{H}_{\Lambda}$ and that the limit inferior is realized by one subsequence $(n_l)$
of the integers simultaneously for all $\Pi$. In light of that, it suffices to prove that 
\[
\nu^{+} = \lim_{n \to \infty} w^{+}(T_m,\Lambda) \leq \liminf_{n \to \infty} w^{-}(T_m, \Lambda) = \nu^{-}.
\]
To this end, we fix a subsequence $(m_l)$ of the integers such that
$\lim_{l \to \infty} w^{-}(T_{m_l}, \Lambda) = \nu^{-}$. For the sake of 
simpler notation, we set $T_l^{\prime} := T_{m_l}$ for all $l \in \NN$. We assume by contradiction that $\nu^{-} < \nu^{+}$.
Let $0 < \varsigma < \frac{\nu^{+} - \nu^{-}}{4}$ and choose $L_{\varsigma} \in \NN$ such that $|\nu^{-} - w^{-}(T^{\prime}_l,\, \Lambda)| < \varsigma$
for all $l \geq L_{\varsigma}$. Then for $l \geq L_{\varsigma}$, one finds $g_l \in G$ such that 
$\frac{w(T_l^{\prime} g_l, \Lambda)}{m_G(T_l^{\prime})} \leq w^{-}(T_l^{\prime},\Lambda) + \varsigma$. Combined
with the previous estimate and with the choice of $\varsigma$, we conclude from $4 \varsigma < \nu^{+} - \nu^{-}$ that
\begin{eqnarray} \label{eqn:startproof}
\frac{w\big(T_l^{\prime}g_l,\, \Lambda \big)}{m_G(T_l^{\prime})} \leq \nu^{-} + 2\varsigma  \leq \nu^{+} - 2 \varsigma
\end{eqnarray}
for all $l \geq L_{\varsigma}$. Writing $\Lambda = (P,\alpha)$ we 
define the $S^{-1}_l$-patches $q_l:= \Lambda_{S_l^{-1}}$
with inverse support $S_l := T_l^{\prime} g_l = T_{m_l} g_l$. 
Using Axiom~(W5) we find some $0< \delta < 1$ and an $m_0 \in \NN$ such that for all
$l \geq m_0$, the condition $d_{S^{-1}_{l}}(\Pi, \Phi) \leq \delta$ implies $|w(S_{l}, \Pi) - w(S_{l},\Phi)| < \varsigma m_G(S_{l})$
for $\Pi, \Phi \in \mathcal{H}_{\Lambda}$. 
We fix this $\delta$ and take an arbitrary $h \in G$. Since $\Lambda$ is almost tempered repetitive with respect to $\mathcal{T} = (T_m)$, 
the pattern arising from the patches $q_l$ must $\delta$-occur in $h^{-1}T^{-1}_{\mathcal{R}^{\mathcal{T}}_{\Lambda}(\delta, m_l)}$, where $\mathcal{R}^{\mathcal{T}}_{\Lambda}$
is the repetitivity index for $\Lambda$ with respect to $\mathcal{T}$. This means that there must be some element $\widetilde{g}_{l,h} \in G$ such that
$S_l \widetilde{g}_{l,h} \subseteq T_{\mathcal{R}^{\mathcal{T}}_{\Lambda}(\delta, m_l)}h$ and $d_{S_l^{-1}}\big( \Lambda,\, \widetilde{g}_{l,h}.\Lambda \big) \leq \delta$. 
This yields
\[
\big| w(S_l, \Lambda) - w(S_l,\, \widetilde{g}_{l,h} \Lambda) \big| \leq \varsigma m_G(S_l)
\]
for $l \geq m_0$.
Furthermore, increasing $m_0$ if necessary and invoking Axiom~(W4) we find that 
\begin{eqnarray} \label{eqn:mainaux1}
\big| w(S_l, \Lambda) - w(S_l\widetilde{g}_{l,h},\,  \Lambda) \big| \leq \frac{11}{10} \varsigma m_G(S_l)
\end{eqnarray}
for $l \geq m_0$. We point out that $m_0$ only depends on parameters given by the weight function $w$
and on $\varsigma$ but not on objects constructed in the proof. Given $h$ as above, 
define $A_{l,h}:= T_{\mathcal{R}^{\mathcal{T}}_{\Lambda}(\delta, m_l)}h \setminus S_l \widetilde{g}_{l,h} $. We will distinguish the following two cases. 
\begin{enumerate}[(A)]
	\item There is some $0 < \kappa_{\Lambda}(\delta) < 1$ such that $\frac{m_G(T_{m_l})}{m_G(T_{\mathcal{R}^{\mathcal{T}}_{\Lambda}(\delta, m_l)})} \leq \kappa_{\Lambda}(\delta)$ for all $l \in \NN$.
	\item There is some subsequence $(T_{m_{l_k}})_k$ such that $\lim_{k \to \infty} \frac{m_G(T_{m_{l_k}})}{m_G(T_{\mathcal{R}^{\mathcal{T}}_{\Lambda}(\delta, m_{l_k})})} = 1$.  
\end{enumerate}
We will cover both of these cases separately and in both cases we will obtain
$\nu^{+} \leq \nu^{+} - c(\varsigma)$ for some $c(\varsigma) > 0$ which is clearly a contradiction. We start with case~(A). 

\medskip

Case~(A). Invoking the general relations 
\[
\partial_L(C \setminus D) \subseteq \partial_L(C) \cup \partial_L(D), \quad \partial_L(C)g = \partial_L(Cg)
\]
for sets $C,D,L \in \mathcal{RK}(G)$ and $g \in G$, we find 
\begin{align} \label{eqn:Al}
\frac{m_G(\partial_K(A_{l,h}))}{m_G(A_{l,h})}
\quad \leq \quad &\frac{m_G(\partial_K(T_{\mathcal{R}^{\mathcal{T}}_{\Lambda}(\delta, m_l)}h) + m_G(\partial_K(S_l \tg_{l,h}))}{m_G(T_{\mathcal{R}^{\mathcal{T}}_{\Lambda}(\delta, m_l)})\, \big(1-\frac{m_G(T_{m_l})}{m_G(T_{\mathcal{R}^{\mathcal{T}}_{\Lambda}(\delta, m_l)})}\big)} \nonumber \\
\quad \leq \quad &\frac{1}{1-\kappa_{\Lambda}(\delta)}\left(\frac{m_G(\partial_K(T_{\mathcal{R}^{\mathcal{T}}_{\Lambda}(\delta, m_l)}))}{m_G(T_{\mathcal{R}^{\mathcal{T}}_{\Lambda}(\delta, m_l)})} + \frac{m_G(\partial_K(T'_l))}{m_G(T'_l)}\right)
\end{align}
for all relatively compact sets $K \subseteq G$.
This shows that the left-hand side of the above inequality tends to $0$  uniformly in $h$ if $l$ tends to infinity. In particular, the sequences $(A_{l,h})_l$ are strong F{\o}lner sequences for all $h \in G$. 
Now let $0 < \varepsilon < 1/10$ and choose some $n_{\varepsilon} \geq \max\{m_0, L_{\varsigma}\}$ such that 
\[
\big| \nu^{+} - w^{+}(T_n, \Lambda) \big| < \varepsilon
\] 
for all $n \geq n_{\varepsilon}$. We also make sure that $n_{\varepsilon}$ is chosen large enough such that
\begin{eqnarray} \label{eqn:inv11}
\frac{m_G(\partial_L(T_n))}{m_G(T_n)} < \varepsilon 
  \quad \mbox{for all } n \geq n_{\varepsilon},
\end{eqnarray}
where $L \in \{ J,B,I\}$ and  $J,B,I$ are the compact subsets of $G$ determined by $w$ as of Axioms~(W2), (W3) and~(W4). 
We fix $n \geq n_{\varepsilon}$ and we apply Lemma~\ref{lemma:tilingswf}
in order to find prototile sets 
\[
\{e \} \subseteq T_n \subseteq S_1^{\varepsilon} \subseteq \dots \subseteq S_{N(\varepsilon)}^{\varepsilon}, \quad S_i^{\varepsilon} \in \{ T_k: k \geq \max\{ i,n \} \},
\]
as well as $M \in \NN$ with $M \geq n$ such that for all $l \geq M$ we have $m_l \geq n$ and for all $h \in G$ we find a finite set 
$C_i^{l,h} \subseteq A_{l,h}$ for each $1 \leq i \leq N(\varepsilon)$ such that the properties~(T1)-(T4) from Definition~\ref{defi:qt} are satisfied and at the same time we obtain
\begin{equation*}
\frac{w\big(A_{l,h},\, \Lambda \big)}{m_G(A_{l,h})} \leq \sum_{i=1}^{N(\varepsilon)} \sum_{c \in C_i^{l,h}} \frac{ w\big( S_i^{\varepsilon}c, \Lambda \big)}{m_G(A_{l,h})}  \, +\, t_1 \cdot \varepsilon,
\end{equation*}
where the constant $t_1 \geq 0$ only depends on the parameters $\eta, \theta, \vartheta$ given by the 
definition of the weight function $w$. We emphasize at this point that the fact that the parameters $n, M$ (and hence also $l$) can be chosen independently of $h$ is justified by the observation that the validity of Lemma~\ref{lemma:tilingswf} depends on the invariance properties of $A_{l,h}$ with respect to the prototile sets, combined with the fact that $\lim_l m_G(\partial_{K}(A_{l,h}))/m_G(A_{l,h}) = 0$ uniformly over $h$ for all $K \in \mathcal{RK}(G)$.

\medskip

Exploiting the sub-additivity property~(W3) of $w$ and recalling 
$A_{l,h}:= T_{\mathcal{R}^{\mathcal{T}}_{\Lambda}(\delta, m_l)}h \setminus S_l \widetilde{g}_{l,h}$ we find with the estimates~\eqref{eqn:Al} and~\eqref{eqn:inv11}, as well as with $l \geq M$ that 
\[
\frac{w\big(  T_{\mathcal{R}^{\mathcal{T}}_{\Lambda}(\delta, m_l)}h, \Lambda \big)}{m_G(T_{\mathcal{R}^{\mathcal{T}}_{\Lambda}(\delta, m_l)})} \leq 
\frac{m_G(S_l)}{m_G(T_{\mathcal{R}^{\mathcal{T}}_{\Lambda}(\delta, m_l)})} \cdot \frac{w\big( S_l \widetilde{g}_{l,h}, \Lambda \big)}{m_G(S_l)}  +  \frac{m_G(A_{l,h})}{m_G(T_{\mathcal{R}^{\mathcal{T}}_{\Lambda}(\delta, m_l)})} \cdot \frac{w\big( A_{l,h}, \Lambda \big)}{m_G(A_{l,h})} + t_2 \cdot \varepsilon
\]
for some constant $t_2 \geq 0$ which only depends on the sub-additivity parameter $ \theta $ of $w$ and on $\delta$. Recall that $l \geq M \geq m_0$. Thus, 
we can combine the latter inequality with the Inequality~\eqref{eqn:mainaux1} and $m_G(S_l)=m_G(T'_l)$ in order to obtain
\[
\frac{w\big( T_{\mathcal{R}^{\mathcal{T}}_{\Lambda}(\delta, m_l)}h, \Lambda \big)}{m_G( T_{\mathcal{R}^{\mathcal{T}}_{\Lambda}(\delta, m_l)})} \leq 
\frac{m_G(T^{\prime}_l)}{m_G(T_{\mathcal{R}^{\mathcal{T}}_{\Lambda}(\delta, m_l)})} \cdot \Bigg( \frac{w(S_l, \Lambda)}{m_G(T_l^{\prime})} + \frac{11}{10} \varsigma \Bigg)
+  \frac{m_G(A_{l,h})}{m_G(T_{\mathcal{R}^{\mathcal{T}}_{\Lambda}(\delta, m_l)})} \cdot \frac{w\big( A_{l,h}, \Lambda \big)}{m_G(A_{l,h})} + t_2 \cdot \varepsilon.
\]
Invoking the sub-additivity induced by the above $\varepsilon$-quasi tilings and using the basic inequality $m_G(A_{l,h}) \leq m_G(T_{\mathcal{R}^{\mathcal{T}}_{\Lambda}(\delta, m_l)})$, the latter inequality transforms to 
\begin{eqnarray*}
	\frac{w\big(T_{\mathcal{R}^{\mathcal{T}}_{\Lambda}(\delta, m_l)}h, \Lambda \big)}{m_G(T_{\mathcal{R}^{\mathcal{T}}_{\Lambda}(\delta, m_l)})} &\leq& 
	\frac{m_G(T^{\prime}_l)}{m_G(T_{\mathcal{R}^{\mathcal{T}}_{\Lambda}(\delta, m_l)})} \cdot \Bigg(\frac{w(S_l, \Lambda)}{m_G(T_l^{\prime})} + \frac{11}{10} \varsigma \Bigg) \\
	&& \quad \quad +  \frac{m_G(A_{l,h})}{m_G(T_{\mathcal{R}^{\mathcal{T}}_{\Lambda}(\delta, m_l)})} \cdot  \sum_{i=1}^{N(\varepsilon)} \sum_{c \in C_i^{l,h}} \frac{m_G(S_i^{\varepsilon}c)}{m_G(A_{l,h})} \frac{ w\big( S_i^{\varepsilon}c, \Lambda \big)}{m_G(S_i^{\varepsilon}c)} \quad  +\, (t_1 + t_2) \cdot \varepsilon.
	\end{eqnarray*}
Note that the tiling properties (T1)-(T4) from Definition~\ref{defi:qt} yield
\[
(1- 2 \varepsilon) \leq \sum_{i=1}^{N(\varepsilon)} \sum_{c \in C_i^{l,h}} \frac{m_G(S_i^{\varepsilon}c)}{m_G(A_{l,h})} \leq \frac{1}{1-\varepsilon}. 
\]
Furthermore, the Inequality~\eqref{eqn:startproof} leads to
\[
\frac{m_G(T^{\prime}_l)}{m_G(T_{\mathcal{R}^{\mathcal{T}}_{\Lambda}(\delta, m_l)})} \cdot \Bigg( \frac{w(S_l, \Lambda)}{m_G(T_l^{\prime})} + \frac{11}{10} \varsigma \Bigg)  \leq
\frac{m_G(T^{\prime}_l)}{m_G(T_{\mathcal{R}^{\mathcal{T}}_{\Lambda}(\delta, m_l)})} \cdot \big( \nu^{+} - \frac{9}{10} \varsigma \big)
\]
for $l \geq M \geq L_{\varsigma}$. 
Recall that we have chosen $n \geq n_{\varepsilon}$ 
and $S_i^{\varepsilon} \in \{ T_k: k \geq \max\{ i,n \} \}$. 
Thus, 
\[
\frac{w\big(S_i^{\varepsilon}c,\, \Lambda \big)}{m_G(S_i^{\varepsilon}c)} 
	\leq w^+(S_i^{\varepsilon},\Lambda)
	\leq \nu^{+} + \varepsilon
\]
holds for all $1 \leq i \leq N(\varepsilon)$, all $l \geq M$, each $h \in G$ and each $c \in C_i^{l,h}$.

\medskip

Since $\Lambda$ is almost tempered repetitive the repetitivity portion $\zeta:[0,1) \to [0,1]$ of $\Lambda$ with respect to $\mathcal{T} = (T_m)$
is positive everywhere and in particular, we have
\[
\inf_{m \in \NN} \frac{m_G(T_m)}{m_G(T_{\mathcal{R}^{\mathcal{T}}_{\Lambda}(\delta, m)})} = \zeta(\delta) > 0.
\]
We set $a_{\varepsilon} := (1-2\varepsilon)$ if $\nu^+ \leq 0$ and $a_{\varepsilon} := (1-\varepsilon)^{-1}$ if $\nu^{+} > 0$ and $t_3 := t_1 + t_2$.   
Since $a_{\varepsilon} \nu^{+} -  \nu^{+}\geq 0$,
we finally derive from the above estimates
\begin{align*}
	\frac{w\big(  T_{\mathcal{R}^{\mathcal{T}}_{\Lambda}(\delta, m_l)}h, \Lambda \big)}{m_G(T_{\mathcal{R}^{\mathcal{T}}_{\Lambda}(\delta, m_l)})} \leq& 
	\frac{m_G(T^{\prime}_l)}{m_G(T_{\mathcal{R}^{\mathcal{T}}_{\Lambda}(\delta, m_l)})}  \big( \nu^{+} - \frac{9}{10} \varsigma \big)
	+ \frac{m_G(T_{\mathcal{R}^{\mathcal{T}}_{\Lambda}(\delta, m_l)}) - m_G(T_l^{\prime})}{m_G(T_{\mathcal{R}^{\mathcal{T}}_{\Lambda}(\delta, m_l)})} \Big( a_{\varepsilon} \nu^{+} + \frac{\varepsilon}{1-\varepsilon} \Big) + t_3 \varepsilon \\
	\leq& a_{\varepsilon}\nu^{+} - \frac{m_G(T^{\prime}_l)}{m_G(T_{\mathcal{R}^{\mathcal{T}}_{\Lambda}(\delta, m_l)})}\Bigg(a_{\varepsilon} \nu^{+} - \Big( \nu^{+} - \frac{9}{10}\varsigma \Big) \Bigg)  + (t_3 + 2)\varepsilon \\
	\leq& a_{\varepsilon}\nu^{+} - \zeta(\delta) \cdot \Big( a_{\varepsilon} \nu^{+} -  \nu^{+} + \frac{9}{10}\varsigma \Big)  + (t_3 + 2)\varepsilon,
\end{align*}
for all $l \geq M$ and all $h \in G$,
where the tempered repetitivity was used in the last step. Thus, taking the supremum over all $h \in G$ on the left hand side of the latter inequality and sending $l \to \infty$, we find that 
\[
\nu^{+} \leq  a_{\varepsilon}\nu^{+} - \zeta(\delta) \cdot \Big( a_{\varepsilon} \nu^{+} -  \nu^{+} + \frac{9}{10}\varsigma \Big)  + (t_3 + 2)\varepsilon.
\] 
By sending $\varepsilon \to 0$, we obtain
$\nu^{+} \leq \nu^{+} - 9/10 \zeta(\delta) \varsigma $ and since both $\varsigma$ and $\zeta(\delta)$ are positive numbers, we have arrived at a contradiction. Hence, we conclude $\nu^{+} = \nu^{-}$ in the case~(A).

\medskip

The case~(B) is much easier to handle. We assume with no loss of generality that 
$$
	\lim_{l \to \infty} \frac{m_G(T_{m_l})}{m_G(T_{\mathcal{R}^{\mathcal{T}}_{\Lambda}(\delta, m_l)})}~=~1.
$$
Then there is some $M_{\varsigma} \in \NN$ such that 
$\sup_{h \in G} m_G(A_{l,h}) \big(m_G\big(T_{\mathcal{R}^{\mathcal{T}}_{\Lambda}(\delta, m_l)}\big)\big)^{-1} < \frac{\varsigma}{5\eta}$
for all $l \geq M_{\varsigma}$, where $\eta > 0$ is the boundedness constant of $w$ as of the property~(W2). 
Increasing $M_{\varsigma}$ if necessary we can assume that 
$\sup_{h \in G} m_G( \partial_L(A_{l,h})) \big(m_G\big(T_{\mathcal{R}^{\mathcal{T}}_{\Lambda}(\delta, m_l)}\big)\big)^{-1} < \frac{1}{10}\varsigma (\max\{1,\eta,\theta \})^{-1}$
and $\frac{m_G(\partial_L(S_l))}{m_G(S_l)} < \frac{1}{10} \varsigma (\max\{1,\theta \})^{-1}$
for all $L \in \{ J,B,I \}$ and each $l \geq M_{\varsigma}$, where $J,B,I \subseteq G$ are compact and $\theta \geq 0$
depend on the weight function $w$. We now fix $l \geq M_{\varsigma}$.
 By the choice of $\widetilde{g}_{l,h}$, the Inequalities~\eqref{eqn:startproof} and~\eqref{eqn:mainaux1} together with $m_G(S_l)=m_G(T'_l)$
imply 
\[
\frac{w\big( S_l \widetilde{g}_{l,h} ,\, \Lambda \big)}{m_G(S_l)} \leq \frac{w\big(S_l,\, \Lambda \big)}{m_G(S_l)} + \frac{11}{10} \varsigma \leq \nu^{+} - \frac{9}{10}\varsigma
\]
for all $l \geq M_{\varsigma}$ and each $h \in G$. Then $m_G(\partial_L(C)) = m_G(\partial_L(Cg))$, the sub-additivity~(W3) with constant $\theta \geq 0$ (given by $w$) and the imposed invariance conditions imply
\begin{eqnarray*}
\frac{w\big( T_{\mathcal{R}^{\mathcal{T}}_{\Lambda}(\delta, m_l)}h ,\, \Lambda\big)}{m_G(T_{\mathcal{R}^{\mathcal{T}}_{\Lambda}(\delta, m_l)})}
&\leq& \frac{w\big(S_l \widetilde{g}_{l,h},\, \Lambda \big)}{m_G(T_{\mathcal{R}^{\mathcal{T}}_{\Lambda}(\delta, m_l)})} + \frac{w\big(A_{l,h},\, \Lambda \big)}{m_G(T_{\mathcal{R}^{\mathcal{T}}_{\Lambda}(\delta, m_l)})} + 
\theta \cdot \frac{m_G(\partial_{B}(S_l))}{m_G(S_l)} +  \theta \cdot \frac{m_G(\partial_B(A_{l,h}))}{m_G(T_{\mathcal{R}^{\mathcal{T}}_{\Lambda}(\delta, m_l)})} \\
&\leq& \frac{w\big(S_l \widetilde{g}_{l,h},\, \Lambda \big)}{m_G(T_{\mathcal{R}^{\mathcal{T}}_{\Lambda}(\delta, m_l)})} + \frac{w\big(A_{l,h},\, \Lambda \big)}{m_G(T_{\mathcal{R}^{\mathcal{T}}_{\Lambda}(\delta, m_l)})} + \frac{1}{5}\varsigma \\
&\leq& \frac{w\big(A_{l,h},\, \Lambda \big)}{m_G(T_{\mathcal{R}^{\mathcal{T}}_{\Lambda}(\delta, m_l)})} + \nu^{+} - \frac{9}{10}\varsigma + \frac{1}{5}\varsigma
\end{eqnarray*}
for all $l \geq M_{\varsigma}$ and all $h \in G$. 
Invoking Remark~\ref{rem:boundedness} with constant $\eta > 0$ applied to $w\big(A_{l,h},\, \Lambda \big)$, then the 
invariance condition on $A_{l,h}$ and the implications explained at the beginning of the proof of case~(B) yield
\begin{eqnarray*}
\frac{w\big( T_{\mathcal{R}^{\mathcal{T}}_{\Lambda}(\delta, m_l)}h ,\, \Lambda\big)}{m_G(T_{\mathcal{R}^{\mathcal{T}}_{\Lambda}(\delta, m_l)})}
&\leq& \eta \cdot \frac{m_G(A_{l,h})}{m_G(T_{\mathcal{R}^{\mathcal{T}}_{\Lambda}(\delta, m_l)})} + \eta \cdot  \frac{m_G(\partial_J(A_{l,h}))}{m_G(T_{\mathcal{R}^{\mathcal{T}}_{\Lambda}(\delta, m_l)})}  + \nu^{+} - \frac{9}{10}\varsigma + \frac{1}{5}\varsigma \\
&\leq& \frac{1}{5} \varsigma + \frac{1}{10} \varsigma + \nu^{+} - \frac{9}{10} \varsigma + \frac{1}{5} \varsigma = \nu^{+} - \frac{2}{5} \varsigma.
\end{eqnarray*}
for all $l \geq M_{\varsigma}$ and all $h \in G$. Thus, by taking first the supremum over all $h \in G$ and then sending $l \to \infty$ we arrive at 
\[
\nu^{+} \leq \nu^{+} - \frac{2}{5}\varsigma,
\]
which is a contradiction since $\varsigma > 0$.
\end{proof}

\section{Unique ergodicity} \label{UE}

\subsection{Unique ergodicity from tempered repetitivity}

We are now in a position to deduce Theorem~\ref{Intro2} from our abstract sub-additive convergence theorem, Theorem~\ref{thm:abstr}. The idea to connect ergodicity properties of dynamical systems $G \curvearrowright X$ to the existence of limits of the form $\lim_{m \to \infty} \frac{w(T_m,\Lambda)}{m_G(T_m)}$ for some class of  almost-additive functions $w:\mathcal{RK}(G) \times X \to \RR$ and strong F{\o}lner exhaustion sequence $(T_m)$, is well-established in the abelian case \cite{DaLe01, Len02b, LaPl03, FrRi14}. While the proof of the sub-additive convergence theorem was more complicated in the non-abelian case than in the abelian case, the application to the unique ergodicity statement is not much different, hence we will be brief.
 
 \medskip
 
For the remainder of this subsection we work in the setting of Theorem~\ref{Intro2}. Thus $\mathcal T = (T_m)$ is a strong F\o lner exhaustion sequence in an amenable unimodular lcsc group $G$ and $\Lambda$ is a weighted Delone set in $G$, which is almost tempered repetitive with respect to $\mathcal T$. We then denote by $\mathcal H_\Lambda$ the hull of $\Lambda$. In order to apply Theorem~\ref{thm:abstr} in this context, we need to construct suitable weight functions.\medskip

Given $f \in C(\mathcal{H}_{\Lambda})$, 
we define the mapping
\begin{equation} \label{SpecialWeights}
w_f: \mathcal{RK}(G) \times \mathcal{H}_{\Lambda} \to \RR, \,\, w(T, \Pi):= \int_T f(g.\Pi)\, dm_G(g).
\end{equation}
\begin{proposition} For every $f \in C(\mathcal{H}_{\Lambda})$, the function $w_f$ given by \eqref{SpecialWeights} is an {almost sub-additive weight function} over $\mathcal H_\Lambda$.
\end{proposition}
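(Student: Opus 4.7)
The task is to verify the five axioms (W1)--(W5) of an admissible weight function for $w_f(T, \Pi) := \int_T f(g.\Pi)\, dm_G(g)$. The key observation is that, since $w_f$ is genuinely \emph{additive} in the first variable and defined in terms of a bounded continuous function, axioms (W1)--(W4) will be essentially automatic; only (W5) will require a real argument, making use of the strong F\o lner property together with uniform continuity of $f$.

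First I would dispatch (W1)--(W4). Axiom (W1) is immediate. For (W2), if $K \subseteq L$ then $w_f(L,\Pi) - w_f(K,\Pi) = \int_{L \setminus K} f(g.\Pi)\,dm_G(g)$, which is bounded in modulus by $\|f\|_\infty \cdot m_G(L \setminus K)$; taking $\eta = \|f\|_\infty$ (and any compact $J$) suffices since our estimate is strictly stronger than what is required. Axiom (W3) holds with $\theta = 0$ by genuine additivity of the integral over disjoint sets. For (W4), the crucial identity is $f(g.(h\Pi)) = f((gh).\Pi)$, which after the substitution $g' = gh$ yields
\[
w_f(Kh^{-1}, h\Pi) = \int_{Kh^{-1}} f((gh).\Pi)\,dm_G(g) = \int_{K} f(g'.\Pi)\,dm_G(g') = w_f(K,\Pi),
\]
where the step $dm_G(g'h^{-1}) = dm_G(g')$ crucially uses the \emph{unimodularity} of $G$. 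Thus (W4) even holds with $\vartheta = 0$.

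The main work lies in (W5). Fix $\varepsilon > 0$, and use compactness of $\mathcal{H}_\Lambda$ together with uniform continuity of $f$ with respect to the metric $d_\ast$ of Remark~\ref{rem:W5-} to extract some $\eta' \in (0, c_\ast)$ such that $d_\ast(\Pi', \Phi') < \eta'$ implies $|f(\Pi') - f(\Phi')| < \varepsilon/2$. The geometric heart of the argument is the elementary identity
\[
\delta_{g.\Pi}(B_r(y)) \;=\; \delta_\Pi(g^{-1}B_r(y)) \;=\; \delta_\Pi(B_r(g^{-1}y)),
\]
valid for all $r > 0$ and $g, y \in G$ by left-invariance of the adapted metric $d$. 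Tracing this through the definitions of $d_\ast$ and $d_{T_m^{-1}}$ shows that the hypothesis $d_{T_m^{-1}}(\Pi, \Phi) \leq \eta'$ transports, after acting by $g$, to $d_\ast(g.\Pi, g.\Phi) \leq \eta'$, provided that $B_{1/\eta'}^{-1} g \subseteq T_m$. The set of $g \in T_m$ failing this interiority condition is contained in $\partial_{B_{1/\eta'}^{-1}}(T_m)$, whose relative Haar measure tends to $0$ as $m \to \infty$ by the strong F\o lner property. Splitting the integral $\int_{T_m}$ accordingly, using the bound $\varepsilon/2$ on the interior piece and the trivial bound $2\|f\|_\infty$ on the boundary piece, we obtain for $m$ large enough
\[
|w_f(T_m, \Pi) - w_f(T_m, \Phi)| \leq \tfrac{\varepsilon}{2}\,m_G(T_m) + 2\|f\|_\infty\, m_G(\partial_{B_{1/\eta'}^{-1}}(T_m)) \leq \varepsilon\, m_G(T_m),
\]
which establishes (W5) with $\delta = \eta'$.

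The main obstacle is the bookkeeping in (W5): specifically, tracking how the comparison metric $d_{T_m^{-1}}$ on the hull interacts with the shift by $g$ appearing inside the integral. The left-invariance identity above, together with the strong F\o lner absorption of $B_{1/\eta'}^{-1}$-translates, is the linchpin of the argument; everything else reduces to bounded convergence and unimodularity.
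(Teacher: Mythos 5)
Your verification of (W1)--(W4) is correct and coincides with the paper's own argument: exact additivity of the integral disposes of (W1)--(W3) with vanishing error constants, and the substitution $g' = gh$ together with unimodularity of $G$ gives (W4) with $\vartheta = 0$ and $I = \emptyset$. One structural point is worth flagging, though: the Proposition only asserts that $w_f$ is an \emph{almost sub-additive} weight function, which by Definition~\ref{defi:weightfunction} requires precisely (W1)--(W4); Axiom (W5) is part of the stronger notion of an \emph{admissible} weight function (Definition~\ref{defi:admissibleweight}) and is treated by the paper in the separate Lemma that follows. Your (W5) argument is nonetheless sound and essentially reproduces that Lemma's proof: left-invariance gives $\delta_{g.\Pi}(B_r(y)) = \delta_\Pi(B_r(g^{-1}y))$, so $d_{T_m^{-1}}$-closeness of $\Pi,\Phi$ transports to $d_\ast$-closeness of $g.\Pi, g.\Phi$ for all $g \in T_m \setminus \partial_{B_{1/\eta'}}(T_m)$, and the strong F{\o}lner condition makes the boundary contribution asymptotically negligible.
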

\begin{proof} We have to check Axioms (W1) -- (W4): (W1) holds by the convention of the empty integral being equal to $0$. Furthermore, (W2) holds with $J = \emptyset$ and $\eta = \| f\|_{\infty}$. The additivity of the integral yield (W3) (which holds as an equality) with $B = \emptyset$ and $\theta = 0$.  
 As for~(W4), a straight forward integral substitution allows us to choose $\vartheta = 0$ and $I = \emptyset$. 
\end{proof}\medskip

The key step is now to show that for $f \in C(\mathcal{H}_{\Lambda})$ the weight function $w_f$ is actually admissible, i.e.\ that it also satisfies Axiom (W5). We will use the fact that, by the same proof as in the abelian case \cite{MR13,FrRi14}, a metric generating the weak-$*$-topology on $\mathcal{H}_\Lambda$ is defined via
\[
d_{\ast}(\Phi,\Pi)
	:= \min\big\{ c_\ast,\, \inf\{\delta>0 :\, 
		\big|\delta_\Phi(B_\delta(y))-\delta_\Pi(B_\delta(y))\big|<\delta 
		\mbox{ for all } y\in \big(B_{1/\delta}\cap P\big)\cup \big(B_{1/\delta}\cap Q\big)		
		\}\big\}
\]
where $\Phi:=(P,\alpha),\Pi :=(Q,\beta)\in \mathcal{H}_\Lambda$ and $c_\ast>0$ are such that $B_{c_\ast}\subseteq U$ and $c_\ast<(2\sigma)^{-1}$. 
We will use this metric in the proof of the following lemma.

\begin{lemma} For every $f\in\mathcal{C}(\mathcal{H}_\Lambda)$ the weight function $w_f$ is admissible with respect to any strong F\o lner exhaustion sequence $(T_m)$.
\end{lemma}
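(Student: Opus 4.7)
Axioms (W1)--(W4) were verified in the preceding proposition, so only (W5) remains. My plan is to combine uniform continuity of $f$ on the compact metric space $(\mathcal{H}_\Lambda,d_\ast)$ with the strong F{\o}lner property of $(T_m)$. Given $\varepsilon>0$, I will first use uniform continuity of $f$ with respect to $d_\ast$ to select some $\delta\in(0,c_\ast)$ such that $d_\ast(\Psi,\Psi')<\delta$ implies $|f(\Psi)-f(\Psi')|<\varepsilon/2$; this same $\delta$ will serve as the constant required by (W5).

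The heart of the argument is a change-of-variables observation. For $\Phi=(P,\alpha),\Pi=(Q,\beta)\in\mathcal{H}_\Lambda$ and $g\in G$, left-invariance of $d$ gives the translation identity $\delta_{g.\Phi}(B_\delta(gz))=\delta_\Phi(B_\delta(z))$, and unfolding the definitions one checks that if $d_{T_m^{-1}}(\Phi,\Pi)\leq\delta$ and $g\in T_m$ satisfies the interior condition $g^{-1}B_{1/\delta}\subseteq T_m^{-1}$, then $d_\ast(g.\Phi,g.\Pi)\leq\delta$, and hence $|f(g.\Phi)-f(g.\Pi)|<\varepsilon/2$. Setting $K:=\overline{B_{1/\delta}^{-1}}\cup\{e\}$, which is compact by properness of $d$, the condition $g^{-1}B_{1/\delta}\subseteq T_m^{-1}$ is equivalent to $Kg\subseteq T_m$, and since $e\in K$ the complementary ``bad'' set $\{g\in T_m:Kg\not\subseteq T_m\}$ is contained in the $K$-boundary $\partial_K(T_m)$.

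The rest is then a routine split of $w_f(T_m,\Phi)-w_f(T_m,\Pi)=\int_{T_m}\bigl(f(g.\Phi)-f(g.\Pi)\bigr)\,dm_G(g)$ into its restriction to the interior $\{g\in T_m:Kg\subseteq T_m\}$, bounded by $(\varepsilon/2)\,m_G(T_m)$ via the uniform continuity step, and its restriction to $\partial_K(T_m)$, bounded crudely by $2\|f\|_\infty\,m_G(\partial_K(T_m))$. Choosing $m_0$ so large that the latter contribution is at most $(\varepsilon/2)\,m_G(T_m)$ for all $m\geq m_0$ is possible by the strong F{\o}lner property and yields the required bound $|w_f(T_m,\Phi)-w_f(T_m,\Pi)|\leq\varepsilon\,m_G(T_m)$. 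The main obstacle, and essentially the only nontrivial step, will be the change-of-variables identification relating $d_\ast(g.\Phi,g.\Pi)$ back to $d_{T_m^{-1}}(\Phi,\Pi)$; once this translation is made precise, the interior/boundary decomposition is standard.
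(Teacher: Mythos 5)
Your proposal is correct and follows essentially the same strategy as the paper's proof: using uniform continuity of $f$ with respect to $d_\ast$ on the compact hull, showing that the hypothesis $d_{T_m^{-1}}(\Phi,\Pi)\leq\delta$ propagates via translation to $d_\ast(g.\Phi,g.\Pi)\leq\delta$ for all $g$ in the ``interior'' of $T_m$ (equivalently, $g$ outside a $K$-boundary for a fixed compact $K$ determined by $\delta$), and controlling the boundary contribution with the strong F\o lner property. The only differences are cosmetic (an $\varepsilon/2$ split instead of $\varepsilon/3$, and $K = \overline{B_{1/\delta}}$ instead of $B_{1/\delta}$; also your stated ``equivalence'' $g^{-1}B_{1/\delta}\subseteq T_m^{-1} \Leftrightarrow Kg\subseteq T_m$ is really just the implication $Kg\subseteq T_m \Rightarrow g^{-1}B_{1/\delta}\subseteq T_m^{-1}$, but that is the direction you actually use).
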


\begin{proof}
	We fix $f\in\mathcal{C}(\mathcal{H}_\Lambda)$, a strong F\o lner exhaustion sequence $(T_m)$ and some $\varepsilon > 0$  throughout. We then have to show the existence of some $\delta>0$ and an $m_0\in\NN$ such that for all $m\geq m_0$, 
	we have the implication
	\begin{equation}\label{W5LemmaToShow}
	d_{T_m^{-1}}(\Pi,\Theta)\leq \delta 
	\quad\Longrightarrow\quad
	\big|w_f(T_m, \Pi) -w_f(T_m, \Theta)\big|\leq \varepsilon \cdot m_G(T_m)\,.
	\end{equation}
	Since $f$ is uniformly continuous on the compact space $\mathcal{H}_{\Lambda}$, there is a $0<\delta <c_\ast$ such that the condition $d_{\ast}(\Pi,\Theta) \leq \delta$ implies  $|f(\Pi) - f(\Theta)| \leq \frac{\varepsilon}{3}$.
	Since $(T_m)$ is a strong F{\o}lner exhaustion sequence and $B:=B_{1/\delta}\in\mathcal{RK}(G)$, we find some $m_0 \in \NN$ such that $\frac{m_G(\partial_B(T_m))}{m_G(T_m)}\leq\frac{\varepsilon}{3\|f\|_\infty}$ and $B \subseteq T_m$ for all $m\geq m_0$. 
	It follows that for all $\Theta \in \mathcal{H}_{\Lambda}$, 
	$$
	\left|\int_{T_m} f(g.\Theta)\, dg-\int_{T_m\setminus\partial_B(T_m)} f(g.\Theta)\, dg\right|
	\leq \int_{\partial_B(T_m)} |f(g.\Theta)|\, dg 
	\leq \|f\|_\infty m_G(\partial_B(T_m))
	\leq \frac{\varepsilon}{3} m_G(T_m).
	$$
	Note that since $e \in B$ and $B=B^{-1}$, we deduce $Bg \subseteq T_m$ for $g \in T_m \setminus\partial_B(T_m)$. Let $\Theta,\Pi\in\mathcal{H}_\Lambda$ such that $d_{T^{-1}_m}(\Pi,\Theta)\leq \delta$.
	Thus, we conclude $d_{g^{-1}B^{-1}}(\Pi,\Theta)\leq \delta$, and hence
	$d_{B}(g.\Pi, g.\Theta)=d_{B^{-1}}(g.\Pi, g.\Theta)\leq  \delta$ for all $g\in T_m\setminus\partial_B(T_m)$. 
	Since $\delta<c_\ast$, $d_{\ast}(g.\Pi,g.\Theta)<\delta$ follows by the definition of $d_\ast$ for all $g\in T_m\setminus\partial_B(T_m)$.
	Thus, $g\in T_m\setminus\partial_B(T_m)$ yields $|f(g.\Pi)-f(g.\Theta)|\leq\frac{\varepsilon}{3}$ and we arrive at
	\begin{eqnarray*}
		\left|\int_{T_m\setminus\partial_B(T_m)} f(g.\Pi)\, dg-\int_{T_m\setminus\partial_B(T_m)} f(g.\Theta)\, dg\right|
		&\leq& \int_{T_m\setminus\partial_B(T_m)} |f(g.\Pi)-f(g.\Theta)| \, dg \\
		&\leq& \frac{\varepsilon}{3}\, m_G(T_m\setminus\partial_B(T_m))
		\leq \frac{\varepsilon}{3}\, m_G(T_m)\,.
	\end{eqnarray*}
	Now \eqref{W5LemmaToShow} follows from the triangle inequality.	
\end{proof}

\medskip

We can now prove unique ergodicity for almost tempered repetitive Delone sets $\Lambda$ in $G$. 

\medskip

\begin{proof}[Proof of Theorem~\ref{Intro2}]
	Suppose that $\Lambda$ is almost tempered repetitive with respect to the strong F{\o}lner exhaustion sequence $(T_m)$.
 Let $f \in C(\mathcal{H}_{\Lambda})$. Since $w_f$ as considered above is an admissible weight function we deduce from Theorem~\ref{thm:abstr} that there is a number $I(f) \in \RR$ such that 
	\begin{eqnarray*} 
	I(f) = \lim_{n \to \infty} \frac{w_f(T_m, \Pi)}{m_G(T_m)} = \lim_{m \to \infty} \frac{1}{m_G(T_m)} \int_{T_m} f(g.\Pi)\, dg
	\end{eqnarray*}
	for all $\Pi \in \mathcal{H}_{\Lambda}$. We claim that for an arbitrary  $G$-invariant Borel probability measure $\nu$ on $\mathcal{H}_{\Lambda}$, we get $I(f) = \int_{\mathcal{H}_{\Lambda}} f\, d\nu$. Indeed, combining the latter limit relation with the dominated convergence theorem, Fubini's theorem and the $G$-invariance of $\nu$, we arrive at
	\begin{eqnarray*}
	I(f) &=& \int_{\mathcal{H}_{\Lambda}} \lim_{m \to \infty} \frac{1}{m_G(T_m)} \int_{T_m} f(g.\Pi)\, dg \, d\nu(\Pi) 
	= \lim_{m \to \infty} \frac{1}{m_G(T_m)} \int_{T_m} \int_{\mathcal{H}_{\Lambda}} f(g.\Pi)\, d\nu(\Pi)\, dg \\
	&=& \lim_{m \to \infty} \frac{1}{m_G(T_m)} \int_{T_m} \int_{\mathcal{H}_{\Lambda}} f(\Pi)\, d\nu(\Pi) \, dg = \int_{\mathcal{H}_{\Lambda}} f(\Pi)\, d\nu(\Pi).
	\end{eqnarray*}
	This shows that for any two $G$-invariant Borel probability measures $\nu_1$ and $\nu_2$ we obtain 
	\[
	\int_{\mathcal{H}_{\Lambda}} f(\Pi)\, d\nu_1(\Pi) = I(f) = \int_{\mathcal{H}_{\Lambda}} f(\Pi)\, d\nu_2(\Pi)
	\]
	for all $f \in C(\mathcal{H}_{\Lambda})$. This in turn is equivalent to $\nu_1 = \nu_2$. Therefore, the hull of $\Lambda$ is uniquely ergodic. It is also minimal by Proposition~\ref{prop:minimalitytemp}. 
\end{proof}

\subsection{Metrics of exact polynomial growth and linear repetitivity} \label{sec:metrics}
In the previous subsection we have established Theorem~\ref{Intro2} which asserts that every tempered repetitive weighted Delone set with respect to a strong F\o lner exhaustion sequence in an lcsc amenable group is uniquely ergodic and minimal. While the notion of a Delone set was originally defined using a (continuous) adapted metric on $G$, it does not depend on any choice of metric on $G$, but only on $G$ as a topological group. The same metric independence thus holds for the statement of the theorem. In the present subsection we would now like to relate tempered repetitivity to the more classical notion of linear repetitivity; this notion, unlike the previous ones, does depend on a choice of metric. To obtain a metric version of Theorem~\ref{Intro2} we will have to choose a metric with the additional property of exact polynomial volume growth. As we will see in Theorem~\ref{Bre2} below, such metrics exist on all compactly-generated groups of polynomial growth (e.g.\ word metrics with respect to compact generating sets), but it seems to be unknown in which generality they can be chosen to be continuous. For this reason we insist for the remainder of this section that $d$ is an adapted metric on $G$, which is not necessarily continuous. We still use the same notation as before concerning open and closed ball, but we emphasize that without the continuity assumption $B_t$ need not be open and $\overline{B}_t$ need not be closed in the topology of $G$. On the other hand, left-invariance of $d$ still implies that 
\begin{equation}
B_rB_s \subset B_{r+s}  \quad \text{and} \quad \overline{B}_r \overline{B}_s \subset \overline{B}_{r+s} \quad \text{for all }r,s >0.
\end{equation}
 Since $d$ is locally bounded we may choose $r_0$ such that $B_{r_0}$ contains an open identity neighbourhood. We claim that then
\begin{equation}\label{TF1}
\overline{B_r} \subset \mathring{\overline{B_{s}}} \quad \text{and} \quad \overline{B_r} \subset B_{r+ r_0} \quad \text{for all }r > 0 \text{ and }s \geq r+r_0.
\end{equation}
Indeed, the former follows from the fact that every $x \in \overline{B_r}$ admits an open neighbourhood which is contained in $xB_{r_0} \subset  \overline{B_r} \overline{B_{r_0}} \subseteq \overline{B_{r+r_0}} \subset \overline{B_s}$. For the latter, if $x \in \overline{B_r}$, then there exist $x_n \in B_r$ such that $x_n \to x$ and hence $x_n^{-1}x \in B_{r_0}$ for some sufficiently large $n_0$, i.e. $x \in x_{n_0}^{-1}B_{r_0} \subset B_rB_{r_0} \subset B_{r+r_0}$.

\medskip

If $d$ is continuous, then $B_t$ is open and $\overline{B}_t$ is closed and we have
\begin{equation}\label{TF2}
B_s \subset \overline{B_s} \subset \overline{B}_s 
	\subset \mathring{\overline{B}_t} \subset \overline{B}_t \quad \text{for all}\; t>s>0.
\end{equation}
Also, if $K \subset G$ is compact and $d$ is continuous, then there exists $r_K > 0$ such that $K \subset B_{r_K}$. This latter property is invariant under coarse equivalence, hence holds for all adapted pseudo-metrics on $G$, even if they are discontinuous by Proposition~\ref{AdaptedMetrics}. We will be interested in metrics with the following special property (see \cite[Section~4.4]{Nev06} for background information)
\begin{definition}[Groups with exact polynomial growth with respect to a metric] \label{defi:exactgrowth}
	Let $G$ be an lcsc group and let $d$ be an adapted metric on $G$. We say that $G$ {\em has exact polynomial growth with respect to $d$} 
	if there are constants $C > 0$ and $q \geq 0$ such that 
	\[
	\lim_{t \to \infty} \frac{m_G(B_t)}{C t^{q}} = 1. 
	\]
\end{definition}
Note that exact polynomial growth is a property of the pair $(G,d)$, not just of the (topological) group $G$. It is also not invariant under replacing $d$ by a coarsely equivalent metric, hence $G$ may have exact polynomial growth with respect to some adapted metrics, but not for others. Note that exact polynomial growth implies that for all $r>0$ we have
\begin{equation}\label{EPGStupid}
\lim_{t \to \infty} \frac{m_G(B_{t+r})}{m_G(B_{t})} = \lim_{t \to \infty} \frac{m_G(B_{t+r})}{m_G(B_{t})} \frac{Ct^q}{C(t+r)^q} = 1.
\end{equation}
The following observation is crucial for our purposes. Though it is well-known to experts (see e.g.\@ \cite[Proposition~4.13]{Nev06} or \cite[Corollary~1.10]{Bre14} for a slightly different formulation), we give a proof for the sake of being self-contained.
	\begin{proposition}[Balls as F\o lner sequences] \label{prop:folner!}
		Let $G$ be an lcsc group that has exact polynomial growth with respect to some adapted metric $d$. Assume that either
	\begin{enumerate}[(a)]
	\item $d$ is continuous and $T_n := \overline{B}_{r_n}$, where $(r_n)_{n \in \mathbb N}$ is any strictly monotone sequence with $\lim_n r_n = \infty$;
	\item or $T_n := \overline{B_{r_n}}$, where $r_{n+1} - r_n > r_0$ for all $n \in \mathbb N$, where $r_0>0$ is chosen such that $B_{r_0}$ contains an open identity neighbourhood
	\end{enumerate}	
	Then $(T_n)_{n \in \mathbb N}$ is a strong F{\o}lner exhaustion sequence.
	\end{proposition}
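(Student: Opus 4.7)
My plan is to verify the three defining conditions of a strong F\o lner exhaustion sequence: the F\o lner property, the strong F\o lner property, and the strong exhaustion. Since the strong F\o lner condition implies the ordinary F\o lner condition (via $T_n\triangle KT_n\subseteq\partial_{K\cup K^{-1}\cup\{e\}}(T_n)$), it suffices to verify the last two. The heart of the argument is to show that for any compact $K\subseteq G$, the $K$-boundary $\partial_K(T_n)$ is contained in an annulus whose inner and outer radii differ from $r_n$ by a constant depending only on $K$ and $r_0$; exact polynomial growth in the form \eqref{EPGStupid} then forces the measure of such an annulus to be negligible compared to that of $T_n$.

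For the strong exhaustion, $e\in T_n$ is clear, and $\bigcup_n T_n=G$ follows from $r_n\to\infty$ together with $G=\bigcup_{t>0}B_t$. The nesting $T_n\subseteq\mathring{T}_{n+1}$ splits by cases. In case~(a), continuity of $d$ makes \eqref{TF2} applicable, giving $\overline{B}_{r_n}\subset\mathring{\overline{B}_{r_{n+1}}}$ since $r_{n+1}>r_n$. In case~(b), the gap condition $r_{n+1}-r_n>r_0$ invokes the first inclusion of \eqref{TF1}, producing $\overline{B_{r_n}}\subset\mathring{\overline{B_{r_{n+1}}}}$.

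For the strong F\o lner property, fix a compact $K\subseteq G$ and, using properness of $d$, choose $R>0$ with $K\cup K^{-1}\subseteq B_R$. Left-invariance of $d$ yields the elementary bound $|d(kg,e)-d(g,e)|\leq R$ for every $k\in K\cup K^{-1}$ and every $g\in G$. For $g\in\partial_K(T_n)$ pick $k_1,k_2\in K$ with $k_1g\in T_n$ and $k_2g\notin T_n$; using this bound, the defining membership criteria for $T_n$ in each case, and \eqref{TF1} in case~(b) to pass from $\overline{B_r}$-membership into $B_{r+r_0}$-membership, one deduces the unified inclusion
\[
\partial_K(T_n)\;\subseteq\;B_{r_n+R+r_0}\setminus B_{r_n-R}.
\]
Combined with the obvious lower bound $m_G(T_n)\geq m_G(B_{r_n})$, this gives
\[
\frac{m_G(\partial_K(T_n))}{m_G(T_n)}\;\leq\;\frac{m_G(B_{r_n+R+r_0})-m_G(B_{r_n-R})}{m_G(B_{r_n})},
\]
and the right-hand side tends to zero as $n\to\infty$ by \eqref{EPGStupid}.

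The main obstacle is the possible discontinuity of $d$ in case~(b), which prevents one from freely identifying $B_r$ with the interior of its closure, or $\overline{B_r}$ with $\overline{B}_r$. The inclusions in \eqref{TF1} provide the essential bridge $B_r\subseteq\overline{B_r}\subseteq B_{r+r_0}$, without which the annulus argument would not convert cleanly into the ratio $m_G(B_{r+c})/m_G(B_r)$ that exact polynomial growth is designed to control.
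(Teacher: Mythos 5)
Your proof is correct and follows essentially the same route as the paper's: verify strong exhaustion via \eqref{TF1}/\eqref{TF2}, sandwich $\partial_K(T_n)$ in an annulus of bounded width, and invoke \eqref{EPGStupid}. The only cosmetic difference is that you phrase the annulus bound in terms of the metric inequality $|d(kg,e)-d(g,e)|<R$ rather than the Minkowski-product inclusion $\partial_K T_n \subset B_{r_K}T_n \cap B_{r_K}(G\setminus T_n)$ used in the paper, and your outer radius $r_n+R+r_0$ is marginally tighter than the paper's $r_n+r_K+r_0+1$.
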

	
\begin{proof} To deal with both cases simultaneously we set $r_0 :=0$ in Case (a). Note first that in either case the sets $T_n$ are compact by continuity, respectively properness of $d$ (whereas closed balls need not be topologically closed, let alone compact, in Case (b)). Moreover we have $T_n \subset \mathring{T}_{n+1}$ by \eqref{TF2} and \eqref{TF1} respectively, and hence $(T_n)$ is a strong exhaustion sequence in either case.

Moreover, by the previous remarks we can find for every compact subset $K \subset G$ some $r_K > 0$ with $K \subset B_{r_K}$. This yields
\[
\partial_KT_n  =  K^{-1} T_n \cap K^{-1}\big( G \setminus T_n \big)  \subset B_{r_K}T_n \cap B_{r_K}\big( G \setminus T_n \big).
\]
We claim that if $x \in B_{r_n-r_K}$, then $x \not\in \partial_K T_n$. Indeed, otherwise we would find $y \in B_{r_K}$ and $z \in G \setminus T_n$ such that $x = yz$ and hence $z = y^{-1}x \in B_{r_n} \subset T_n$. On the other hand we deduce that in Case (a) we have $\partial_KT_n \subset B_{r_K} \overline{B}_{r_n} \subset \overline{B}_{r_K + r_n} \subset B_{r_n + r_K + 1}$. In Case (b) we can use \eqref{TF1} to similarly deduce
\[
\partial_KT_n \subset B_{r_K} \overline{B_{r_n}} \subset \overline{B_{r_K + r_n}} \subset B_{r_K + r_n + r_0}
\]
Thus in either case we have
\[
\partial_KT_n  \subset B_{r_n + r_K + r_0 + 1} \setminus B_{r_n -r_K}.
\]
Using \eqref{EPGStupid} we obtain
	\begin{align*}
	\limsup_{n \to \infty} \frac{m_G(\partial_KT_n)}{m_G(T_n)} &\leq \limsup_{n \to \infty} \frac{m_G(B_{r_n + r_K + r_0 + 1} \setminus B_{r_n -r_K})}{m_G({B}_{r_n})}  \\  
	&\leq \lim_{n \to \infty} \frac{m_G(B_{r_n+(r_K+r_0+1)})}{m_G(B_{r_n})}- \lim_{n \to \infty} \frac{m_G(B_{r_n-r_K})}{m_G(B_{(r_n-r_K) + r_K})} = 1 - 1 = 0. 
	\end{align*}
	This finishes the proof.
\end{proof}

We will see that if one restricts attention to strong F{\o}lner exhaustion sequences which arise from metrics of exact polynomial growth as above, then our notion of (almost) tempered repetitivity reduces to the more classical and purely metric notion of (almost) linear repetitivity. Let us recall the relevant definitions:

\begin{definition} Let $G$ be an lcsc group and let $d$ be an adapted metric on $G$. A weighted Delone set $\Lambda$ in $G$ is called {\em almost linearly repetitive with respect to $d$} if it is almost repetitive and there exists some function $c_{\Lambda}:(0,\infty) \to (0,\infty)$ such that 
$$
\varrho(\delta, R) := c_{\Lambda}(\delta) R, \qquad (\delta,R)\in(0,\infty)\times [1,\infty),
$$ 
is an almost repetitivity function.
\end{definition}

For weighted Delone sets with FLC it makes sense to work with the following definition. 

\begin{definition}\label{DefLinRep}
A weighted Delone set $\Lambda$ on $G$ is called {\em linearly repetitive} if it is repetitive and there exists a constant $c_{\Lambda}$ such that $\varrho(R) := c_{\Lambda} R,\, R\in [1,\infty),$ is a repetitivity function.
\end{definition}

We emphasize that (almost) linear repetitivity is a metric notion, whereas (almost) tempered repetitivity is a notion relative to a given strong F\o lner exhaustion sequence. However, we have the following relation.

\begin{proposition}[Linear repetitivity vs.\ tempered repetitivity]
\label{prop:Temp-LR} Assume that $G$ is an lcsc group that has exact polynomial growth with respect to some adapted metric $d$ and let $r_0 > 0$ such that $B_{r_0}$ is an identity neighbourhood. Then for a weighted Delone set $\Lambda$ in $G$ the following statements are equivalent:
\begin{enumerate}[(i)]
\item $\Lambda$ is almost linearly repetitive with respect to $d$.
\item $\Lambda$ is almost tempered repetitive with respect to every strong F\o lner exhaustion sequence of the form
$\mathcal{T} = (\overline{B_{r_n}})$ with the sequence $(r_n)$ satisfying 
 \[
 r_0 \leq \inf_n (r_{n+1} - r_n) \leq \sup_{n}( r_{n+1} - r_n) < \infty.
 \]
\item $\Lambda$ is almost tempered repetitive with respect to the strong F\o lner exhaustion sequence $(\overline{B_{nr_0}})$.
\end{enumerate}
If $d$ is continuous, then we can replace closures of balls by closed balls, the condition in (ii) by
 \[0< \inf_n (r_{n+1} - r_n) \leq \sup_{n}( r_{n+1} - r_n) < \infty,\]
 and the strong F\o lner exhaustion sequence in (iii) by $(\overline{B}_{n})$. \\ 
 If $\Lambda$ has FLC, then we can drop the word ``almost'' from any of the above equivalences.
\end{proposition}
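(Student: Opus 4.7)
The plan is to prove the three conditions cyclically as (i) $\Rightarrow$ (ii) $\Rightarrow$ (iii) $\Rightarrow$ (i). The implication (ii) $\Rightarrow$ (iii) is immediate by specializing to $r_n := n r_0$, which satisfies the growth condition of (ii). For the other two directions, the core mechanism is a dictionary between metric statements about patterns in balls $B_R$ and sequential statements about patterns in the F\o lner sets $\mathring T_n^{-1}$; this dictionary rests on the symmetry $\mathring T_n^{-1} = \mathring T_n$ (inherited from $B_r = B_r^{-1}$) together with the two-sided sandwich $B_{r_n - r_0} \subseteq \mathring T_n \subseteq T_n \subseteq B_{r_n + r_0}$ obtained from \eqref{TF1}. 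The one analytic input beyond this is exact polynomial growth, which turns an affine-linear relation $r_n \leq \alpha r_m + \beta$ into a Haar-volume ratio $m_G(T_m)/m_G(T_n)$ bounded below by $\alpha^{-q}(1+o(1))$, and vice versa.

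For (i) $\Rightarrow$ (ii), I would fix $\delta \in (0,1)$ and $m \in \NN$, set $R := r_m + r_0$ so that $\mathring T_m^{-1} \subseteq B_R$, and observe that every $\mathring T_m^{-1}$-pattern of $\Lambda$ arises as the restriction of a $B_R$-pattern obtained by enlarging the relevant restriction of $\delta_\Lambda$. Almost linear repetitivity delivers $\delta$-occurrence of the $B_R$-pattern in $B_{c_\Lambda(\delta) R}(h)$ for every $h \in G$, and restriction preserves $\delta$-similarity because $d_S$ is monotone increasing in $S$. Taking $n$ to be the smallest index with $r_n \geq c_\Lambda(\delta) R + r_0$ ensures $B_{c_\Lambda(\delta) R} \subseteq \mathring T_n$ and hence $\mathcal{R}^{\mathcal{T}}_{\Lambda}(\delta, m) \leq n$; the upper gap bound $r_{n+1} - r_n \leq K$ forces $r_n \leq c_\Lambda(\delta) R + r_0 + K$. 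Exact polynomial growth then yields $m_G(T_m)/m_G(T_n) \to c_\Lambda(\delta)^{-q}$ as $m \to \infty$, while the finitely many initial values of $m$ contribute fixed positive ratios, so $\zeta(\delta) > 0$.

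For (iii) $\Rightarrow$ (i), I would fix $\delta \in (0,1)$ and $R \geq 1$, pick $m := \lceil R/r_0 \rceil + 1$ so that $B_R \subseteq \overline{B_R} \subseteq \mathring T_m = \mathring T_m^{-1}$ by \eqref{TF1}, and extend any $B_R$-pattern of $\Lambda$ to a $\mathring T_m^{-1}$-pattern by enlarging its underlying restriction of $\delta_\Lambda$. Almost tempered repetitivity $\delta$-locates this larger pattern in every translate $h \mathring T_n^{-1}$ with $n := \mathcal{R}^{\mathcal{T}}_{\Lambda}(\delta, m)$, and restriction returns a $\delta$-occurrence of the $B_R$-pattern in $h \mathring T_n^{-1} \subseteq B_{(n+1) r_0}(h)$. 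Positivity of $\zeta(\delta)$ combined with exact polynomial growth bounds $n r_0 \leq \zeta(\delta)^{-1/q}(R + 2 r_0)(1 + o(1))$ for large $R$; in any bounded range $R \in [1, R_0]$ the index $\mathcal{R}^{\mathcal{T}}_{\Lambda}(\delta, m)$ takes only finitely many finite values. Taking $c_\Lambda(\delta)$ to be the maximum over the two regimes produces the required linear bound.

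For continuous $d$ I would run the same arguments with $\overline{B}_r$ replacing $\overline{B_r}$ throughout, using \eqref{TF2} instead of \eqref{TF1}; any strictly positive lower bound on the gaps $r_{n+1} - r_n$ then plays the role of $r_0$, matching the relaxed hypothesis in the proposition. In the FLC setting, Proposition~\ref{prop:AlmRep-Rep} and the remark following it show that almost repetitivity plus FLC implies repetitivity with a $\delta$-independent repetitivity function (since for sufficiently small $\delta > 0$, $\delta$-similarity of $B_R$-patches reduces to equivalence via the finite list of such patches up to translation), so the word ``almost'' can be dropped from each of the three equivalences. The main technical obstacle I anticipate is the uniform control of $c_\Lambda(\delta)$ in the small-$R$ regime of (iii) $\Rightarrow$ (i): unlike the asymptotic regime, where exact polynomial growth furnishes a direct bound, here one must combine the finiteness of $\mathcal{R}^{\mathcal{T}}_{\Lambda}(\delta, \cdot)$ on a finite initial segment of indices with the gap bound on $(r_n)$ to extract a single uniform constant.
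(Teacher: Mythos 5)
Your cyclic scheme $(i)\Rightarrow(ii)\Rightarrow(iii)\Rightarrow(i)$, the sandwich $B_{r_n - r_0}\subseteq \mathring{T}_n \subseteq T_n \subseteq B_{r_n+r_0}$ from \eqref{TF1}, the symmetry of balls, and the use of exact polynomial growth to convert an affine-linear bound $r_n \leq \alpha r_m + \beta$ into a positive lower bound on $m_G(T_m)/m_G(T_n)$ are exactly the paper's proof. The only cosmetic difference in the main argument is bookkeeping: you split into an asymptotic regime (where $(1+o(1))$-type estimates apply) and a finite initial segment, whereas the paper chooses uniform constants $\gamma,\beta(\delta)\geq 1$ valid for all indices at once; either works, and your anticipated ``small-$R$'' obstacle in $(iii)\Rightarrow(i)$ is resolved correctly by the two-regime maximum, matching the paper's choice of a single $\kappa(\delta)$ with $\bigl(\beta(\delta)^2/\zeta(\delta)\bigr)^{1/q}3R + 1 \leq \kappa(\delta)R$ for $R\geq 1$.

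Where you deviate, and where there is a genuine gap, is the FLC coda. The paper handles the ``drop almost'' claim by rerunning the same three implications verbatim with $\delta=0$: a linear repetitivity function $c_\Lambda R$ gives $\mathcal{R}_\Lambda^{\mathcal{T}}(0,m)<\infty$ with the same affine control, and $\zeta(0)>0$ feeds back into the $(iii)\Rightarrow(i)$ direction; no appeal to Proposition~\ref{prop:AlmRep-Rep} is made. Your route instead tries to first upgrade ``almost linearly repetitive $+$ FLC'' to ``linearly repetitive'' via Proposition~\ref{prop:AlmRep-Rep}, but this does not close as stated. The threshold $\delta>0$ below which $\delta$-similarity of $B_R$-patches collapses to equivalence is permitted to depend on $R$ (it is chosen \emph{after} fixing $R$ and $R_1$ in the proof of Proposition~\ref{prop:AlmRep-Rep}), and the resulting repetitivity function $\varrho(\delta(R)/2, R+R_1(R))+\delta(R)/2$ need not be linear in $R$ even when $\varrho(\delta,\cdot)$ is linear for each fixed $\delta$, since $c_\Lambda(\delta(R))$ could blow up as $\delta(R)\to 0$. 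You would either have to argue for an $R$-uniform threshold $\delta_0>0$ (which FLC alone does not provide) or, more simply, do what the paper does and run the argument at $\delta=0$ directly.
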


\begin{remark}
Note that due to {Part~(b) of} Proposition~\ref{prop:folner!}, the sequences appearing in the Assertions (ii) and (iii) of Proposition~\ref{prop:Temp-LR} are indeed strong F{\o}lner exhaustion sequences. {In the case where the metric is additionally continuous, we find that the closed balls give rise to strong F{\o}lner exhaustion sequences by Part~(a) of Proposition~\ref{prop:folner!}.}  Note that the condition on the sequence~$(r_n)$ given in Assertion~(ii) implies that $r_n < r_{n+1}$ for all $n \in \NN$, as well as $r_n \nearrow \infty$ as $n \to \infty$. 
However, even if $d$ is continuous these two monotonicity conditions are not sufficient for $\Lambda$ being tempered repetitive with respect to $\mathcal{T}$, as can be inferred by Example~\ref{exa:crazygrowth}.
\end{remark}
\begin{proof} Assume that $G$ has exact polynomial growth with respect to $d$ with parameters say $C > 0$ and $q \geq 0$ and let $(t_m)$ be any strictly increasing sequence such that $t_m \nearrow \infty$ as $m \to \infty$. We recall from \eqref{EPGStupid} that for all $c > 0$ there exists a constant $\gamma > 1$ such that, for all $m \in \NN$
\[
m_G(B_{ct_m + r_0}) =  \frac{ m_G(B_{ct_m + r_0})}{m_G(B_{ct_m})} \cdot m_G(B_{ct_m}) \leq \gamma m_G(B_{ct_m}).
\]

Then for every $c^{\prime} > 0$ we find a constant $\beta \geq 1$ such that for all $m \in \NN$ and $c \in \{1, c'\}$,
\[
	\beta^{-1} C (ct_{m})^{q} \leq m_G(B_{ct_m}) \leq m_G(\overline{B_{c t_{m}}}) \leq m_G(B_{ct_m + r_0})\leq \gamma m_G(B_{ct_m}) \leq \beta C (ct_{m})^q.\]
The same argument also holds for $\overline{B}_{ct_m}$ instead of $\overline{B_{c t_{m}}}$ (one can set $r_0 = 1$ in this situation).
\medskip

	(i) $\Longrightarrow$ (ii): Assume that $\Lambda$ is almost linearly repetitive with almost repetitivity function $\varrho(\delta,R) = c_{\Lambda}(\delta) \cdot R$ (where $c_{\Lambda} = c_{\Lambda}(0)$ in the FLC case), and let $\mathcal T = (T_m)$, where $T_m:= \overline{B_{r_m}}$. We set $a:= \inf_m (r_{m+1} - r_m) > 0$, as well as $A := \sup_m (r_{m+1} - r_m) < \infty$. Then by the given almost linear repetitivity we get $\mathcal{R}_{\Lambda}^{\mathcal{T}}(\delta, m) \leq a^{-1} c_{\Lambda}(\delta) r_{m}$ for all $m \in \NN$. 
	Further, we have that $T_{\mathcal{R}_{\Lambda}^{\mathcal{T}}(\delta, m)} \subseteq \overline{B_{A \mathcal{R}_{\Lambda}^{\mathcal{T}}(\delta, m)}} $ for all $m \in \NN$.
	In line with the remark made at the beginning of the proof (applied with $t_m = r_m$ and with $c^{\prime} = A a^{-1} c_{\Lambda}(\delta)$) we find $\beta = \beta(\delta) \geq 1$ such that 
\begin{align*}
\inf_{m \in \NN} \frac{m_G(T_m)}{m_G(T_{\mathcal{R}_{\Lambda}^{\mathcal{T}}(\delta, m)})} \geq \inf_{m \in \NN} \frac{\beta(\delta)^{-1} \cdot C r_m^{q} }{m_G(\overline{B_{A a^{-1} c_{\Lambda}(\delta)\cdot r_m}})} \geq  \beta(\delta)^{-2} (Aa^{-1})^{-q} c_{\Lambda}(\delta)^{-q} > 0.
\end{align*}
Hence, the repetitivity portion of $\Lambda$ is bounded from below by $\zeta(\delta) \geq \beta(\delta)^{-2} \cdot (Aa^{-1})^{-q} c_{\Lambda}(\delta)^{-q}$, which is positive.
This shows that $\Lambda$ is almost tempered repetitive with respect to\@ $\mathcal{T}$. In the continuous case, the same argument works with $T_m := \overline{B}_{r_m}$ instead of $T_m:= \overline{B_{r_m}}$.

\medskip

(ii)  $\Longrightarrow$ (iii): This is obvious since the sequence $(r_n)$ with $r_n = r_0n$ (or $r_n = n$  in the continuous case) for all $n \in \NN$ satisfies the growth condition given in~(ii). 

\medskip

(iii) $\Longrightarrow$ (i): Since the statements are invariant under rescaling the metric, we may assume that $r_0 = 1$ to simplify notation. Assume that  $\Lambda$ is almost tempered repetitive with respect to $\mathcal{T} =  (\overline{B_n}) $.  Now let $R \geq 1$ and $m := \lceil R \rceil + 1$ such that $m \in \NN$ with $m \geq 2$ and $m-2 < R \leq m - 1$. By assumption, there is a function $\zeta:[0,1) \to [0,1]$ with $\zeta(\delta) >0$ for all $\delta > 0$ (and in the FLC case $\zeta(0) > 0$)  such that 
$$
0 < \zeta(\delta) \leq \inf_{m \in \NN} \frac{m_G(T_m)}{m_G(T_{\mathcal{R}^{\mathcal{T}}_{\Lambda}(\delta, m)})}. 
$$
 Using the considerations made at the beginning of the proof (applied with $t_m =m$ and $c^{\prime} = 1$) we obtain $\beta(\delta) \geq 1$ such that for all $m \in \NN$,
 \[
 {\mathcal{R}^{\mathcal{T}}_{\Lambda}(\delta, m)} \leq \left(\frac{\beta(\delta)^2}{\zeta(\delta)} \right)^{1/q} \cdot m. 
 \]
 Note that $m = (m-2) + 2 \leq R + 2R = 3R$. By the first inclusion given in~\eqref{TF1} and $R+1 \leq m$ we get $B_R \subseteq \mathring{\overline{B_{R+1}}} \subseteq \mathring{\overline{B_m}}$.  
It follows from the definition of the repetitivity index that every $B_R$-pattern $\delta$-occurs in $h \mathring{T}^{-1}_{\mathcal{R}^{\mathcal{T}}_{\Lambda}(\delta, m)} = \mathring{\overline{B_{{\mathcal{R}^{\mathcal{T}}_{\Lambda}(\delta, m)}}(h)}} \subseteq B_{ \mathcal{R}^{\mathcal{T}}_{\Lambda}(\delta, m) + 1}(h) $ for all $h \in G$, where the latter inclusion follows from the second inclusion in~\eqref{TF1}. We can now choose $\kappa(\delta) > 0$ large enough such that $\big( \beta(\delta)^2/\zeta(\delta)\big)^{1/q} 3R + 1 \leq \kappa(\delta)R$ for all $R \geq 1$,
 and then $\varrho(\delta, R):= \kappa(\delta) R$ is an almost repetitivity function for $\Lambda$. The special case where $d$ is a continuous metric follows along  similar lines, using the inclusions given in~\eqref{TF2}.  
\end{proof}

\medskip

This now allows us to restate Theorem~\ref{Intro2} in purely metric terms and thereby establish Theorem~\ref{Intro1a} from the introduction:

\begin{theorem}\label{LRConvenient} 
Assume that $G$ is an lcsc group that has exact polynomial growth with respect to an adapted metric $d$. If a weighted Delone set $\Lambda$ in $G$ is almost linearly repetitive with respect to $d$, then its hull $\mathcal H_\Lambda$ is minimal and uniquely ergodic.
\end{theorem}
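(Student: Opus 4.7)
The plan is to derive Theorem~\ref{LRConvenient} by directly combining Proposition~\ref{prop:Temp-LR} with Theorem~\ref{Intro2}, which have already been established earlier in the paper. There is no new technical content required: both the metric-to-F\o lner translation and the unique ergodicity result are in place, so the proof is essentially a matter of chaining them together.

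First, I would use the hypothesis that $G$ has exact polynomial growth with respect to $d$ to fix a constant $r_0>0$ such that $B_{r_0}$ contains an open identity neighborhood, and then consider the sequence $\mathcal{T}=(\overline{B_{nr_0}})_{n\in\NN}$. By Proposition~\ref{prop:folner!} (Part~(b) applied to $r_n := nr_0$, which satisfies $r_{n+1}-r_n = r_0$), $\mathcal{T}$ is a strong F\o lner exhaustion sequence in $G$. Next, since $\Lambda$ is almost linearly repetitive with respect to $d$, Proposition~\ref{prop:Temp-LR}, namely the implication (i) $\Longrightarrow$ (iii), tells us that $\Lambda$ is almost tempered repetitive with respect to $\mathcal{T}$. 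Finally, Theorem~\ref{Intro2} applied to $\Lambda$ and $\mathcal{T}$ yields that $\mathcal{H}_\Lambda$ is minimal and uniquely ergodic, which is precisely the conclusion.

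Since every step here is a direct invocation of an already-proven result, there is no genuine obstacle in the argument; the substantive work has been carried out earlier in the paper, in establishing Theorem~\ref{Intro2} (via the sub-additive convergence theorem and its application in Section~\ref{UE}) and in proving the metric-to-F\o lner comparison in Proposition~\ref{prop:Temp-LR}. The only thing worth double-checking is that the hypotheses of Proposition~\ref{prop:folner!}(b) are satisfied for the sequence $r_n := nr_0$ \emph{without} requiring continuity of $d$; this is indeed the case, as that proposition explicitly allows for discontinuous adapted metrics provided consecutive radii differ by more than $r_0$ (and the condition $r_{n+1}-r_n > r_0$ can be achieved by replacing $r_0$ with a slightly larger constant $r_0'$ if strict inequality is desired, which does not affect the conclusion since $B_{r_0}\subset B_{r_0'}$ still contains an identity neighborhood).
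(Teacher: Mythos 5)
Your proof is correct and follows essentially the same route as the paper: combine Proposition~\ref{prop:Temp-LR} with Theorem~\ref{Intro2}. The only cosmetic difference is that the paper cites Proposition~\ref{prop:minmin} separately for minimality, while you extract minimality directly from the statement of Theorem~\ref{Intro2}; both are valid.
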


\begin{proof}
The unique ergodicity follows from combining Proposition~\ref{prop:Temp-LR} with Theorem~\ref{Intro2}. Furthermore, $\mathcal{H}_\Lambda$ is minimal by Proposition~\ref{prop:minmin} since $\Lambda$ is almost (linearly) repetitive.
\end{proof}

\medskip

\subsection{Examples of groups with metrics of exact polynomial growth}
We have seen in the previous subsection that adapted metrics of exact polynomial growth give rise to strong F\o lner exhaustion sequences and allow us to translate the abstract notion of tempered repetitivity into the more concrete notion of linear repetitivity. We now provide some explicit examples of such metrics. Our starting point is the Cygan-Kor{\'a}nyi metric on the Heisenberg group:

\begin{example}\label{SayMyName}
Consider the $3$-dimensional Heisenberg group $G = \CC \times \RR$ with group law given by
\[
(z,t) \cdot (w,s) = \Big( z + w, t+s + \frac{1}{2} \operatorname{Im} \overline{z}w \Big)
\] 
and note that for $z \in \CC$ and $t \in \RR$ we have
\[
(z,t)^{-1} = (-z,-t).
\]
The {\em Cygan-Kor{\'a}nyi norm} $\| \cdot \|: G \to [0, \infty)$
\[
\big\| (z,t) \big\| := \sqrt[4]{|z|^4 + 16t^2}
\]
defines a group norm on the Heisenberg group, cf.\@ \cite[p.\@ 18, Inequality~(2.12)]{CDPT07}, which means that the formula
\[
d_G(g,h) = \| g^{-1}h\|.
\]
defines a left-invariant metric $d_G$ on $G$, called thee \emph{Cygan-Kor{\'a}nyi metric}. Since the Cygan-Kor{\'a}nyi norm is bounded above and below by suitable powers of the Euclidean norm on $\CC \times \RR \cong \RR^3$, the metric $d_G$ is proper and defines the group topology. Thus, $d_G$ is continuous and adapted. Using that the Haar measure $m_G$ on $G$ is the $3$-dimensional Lebesgue measure, a simple integration via polar coordinates using the substitution $s = r^2$ yields
\begin{eqnarray*}
m_G(B_R) &=& \underset{ x^2 + y^2 \leq R^2}{\int \int} \int\limits_{-\frac{1}{4} \sqrt{R^4 - (x^2 +y^2)^2}}^{{\frac{1}{4} \sqrt{R^4 - (x^2 +y^2)^2}}} \quad  1\, ds\, dx\, dy
\quad = \quad  \int_0^R r \Bigg( \int_{0}^{2\pi} \Bigg( \int\limits_{-\frac{1}{4}\sqrt{R^4 - r^4}}^{\frac{1}{4}\sqrt{R^4 - r^4}} \quad 1 \, ds \Bigg) \, d\theta \Bigg)\, dr \\
&=& 2\pi \cdot \int_0^R r \cdot \frac{1}{2} \sqrt{R^4 - r^4}\, dr \quad = \quad  \pi \cdot \int_0^R r \cdot \sqrt{R^4 - r^4}\, dr \quad = \quad  \frac{\pi}{2} \int_0^{R^2} \sqrt{R^4 - s^2}\, ds.
\end{eqnarray*}
Since  the above integral is equal to the area of the quarter circle with radius $R^2$, which is $(R^2)^2 \pi / 4$, we obtain
\[
m_G(B_R) =  \frac{\pi}{2} \cdot \frac{(R^2)^2 \cdot \pi}{4} = \frac{\pi^2}{8} R^4,
\]
and hence
\[	 \lim_{t \to \infty} \frac{m_G(B_{t+r} )}{m_G(B_t)} =  \lim_{t \to \infty} \frac{(t+r)^4}{t^4} = 1 \quad\quad \mbox{for all } r > 0. 
\]
Therefore, the Heisenberg group has exact polynomial growth with respect to the Cygan-Kor{\'a}nyi metric $d_G$.
\end{example}
The previous example can be generalized to cover the whole class of homogeneous Lie groups: As explained in \cite[Chapter~3]{FR16}, each homogeneous Lie group $G$ comes equipped with a class of continuous adapted metrics, called homogeneous metrics, which are mutually quasi-isometric, and a natural family of dilation automorphisms $(D_\lambda)_{\lambda>0}$ such that $B_t = D_t(B_1)$ for balls with respect to any homogeneous metric $d$. By \cite[p.100, Equation (3.6)]{FR16} this implies that  \begin{equation}\label{HomExPG}m_G(B_t) = m_G(B_1) t^{\kappa},\end{equation} where $\kappa$ is a constant depending on $G$ (called the \emph{homogeneous dimension} of the homogeneous group). We have seen in the case of the Heisenberg group that $\kappa = 4$. It follows immediately from \eqref{HomExPG} that homogeneous groups have exact polynomial groups with respect to homogeneous metrics, and this property also carries over to lattices in such group (see \cite[Proposition~3.36]{BHP21-prim}):

\begin{proposition}\label{HomogeneousExact} Let $G$ be a homogeneous Lie group. If $d$ is a homogeneous metric on $G$, then $G$ has exact polynomial growth with respect to $d$. Similarly, if $\Gamma$ is a lattice in $G$, then $\Gamma$ has exact polynomial growth with respect to $d|_{\Gamma \times \Gamma}$.\qed
\end{proposition}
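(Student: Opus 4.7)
The first assertion is essentially tautological given the machinery of homogeneous groups that was quoted just before the statement. My plan is to combine the dilation identity $B_t = D_t(B_1)$ with the scaling behaviour of Haar measure under dilations. Since $(D_\lambda)_{\lambda>0}$ is a family of automorphisms and the Haar measure on a homogeneous group transforms as $m_G(D_\lambda(A)) = \lambda^{\kappa} m_G(A)$ for every Borel set $A$ (this is \cite[p.~100, Eq.~(3.6)]{FR16} and is exactly the defining property of the homogeneous dimension $\kappa$), I obtain the \emph{exact} identity
\[
m_G(B_t) \;=\; m_G(D_t(B_1)) \;=\; t^{\kappa}\, m_G(B_1)
\qquad (t>0).
\]
Setting $C := m_G(B_1) > 0$ and $q := \kappa$, the limit in Definition~\ref{defi:exactgrowth} is realised as an equality for every $t$, and the first claim follows.

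For the lattice statement, let $\Gamma < G$ be a lattice. Because $G$ is nilpotent (hence amenable and of polynomial growth), every lattice in $G$ is uniform, so $G/\Gamma$ is compact and one can choose a relatively compact Borel fundamental domain $F \subset G$ for the right action of $\Gamma$. I will pick $r>0$ large enough so that $F \subset B_r$; recall that $d$ is left-invariant, hence $B_r = B_r^{-1}$. The Haar measure on $\Gamma$ (viewed as a discrete group) is the counting measure, so I need to control $|\Gamma \cap B_t|$ as $t \to \infty$.

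The central step is the standard fundamental-domain sandwich. On the one hand, for every $\gamma \in \Gamma \cap B_{t-r}$ we have $\gamma F \subset B_{t-r} B_r \subset B_t$, and the translates $\gamma F$ are pairwise disjoint, giving
\[
|\Gamma \cap B_{t-r}|\cdot m_G(F) \;\leq\; m_G(B_t).
\]
On the other hand, any $x \in B_t$ lies in a unique $\gamma F$ with $\gamma \in B_t F^{-1} \subset B_{t+r}$, so
\[
m_G(B_t) \;\leq\; |\Gamma \cap B_{t+r}|\cdot m_G(F).
\]
Combining both inequalities with the identity $m_G(B_t) = C t^{\kappa}$ from the first part, and using that $(t\pm r)^\kappa/t^\kappa \to 1$, I divide by $t^\kappa$ and take $\liminf$ and $\limsup$ separately to conclude
\[
\lim_{t \to \infty} \frac{|\Gamma \cap B_t|}{t^\kappa} \;=\; \frac{m_G(B_1)}{m_G(F)} \;=:\; C'.
\]
This is exactly exact polynomial growth of $\Gamma$ with respect to $d|_{\Gamma\times\Gamma}$, with constant $C'$ and exponent $\kappa$. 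The only non-formal input is the existence of a relatively compact fundamental domain, which is where nilpotency enters; aside from this the proof is just bookkeeping, so I expect no serious obstacle beyond verifying that $d|_{\Gamma\times\Gamma}$ is indeed adapted in the sense of the introduction (left-invariance is inherited, properness reduces to the finiteness of $\Gamma \cap B_t$, which follows from discreteness of $\Gamma$ together with relative compactness of $B_t$, and local boundedness is automatic for a discrete group).
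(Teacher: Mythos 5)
Your proof is correct and, for the group statement, is exactly what the paper does: the homogeneity $m_G(B_t)=m_G(D_t(B_1))=t^{\kappa}m_G(B_1)$ (which the paper records as equation~\eqref{HomExPG}) makes exact polynomial growth an identity rather than a limit, with $C=m_G(B_1)$ and $q=\kappa$. For the lattice statement, the paper itself gives no proof here (it only cites \cite[Prop.\ 3.36]{BHP21-prim}), so strictly speaking there is nothing in this paper to compare with; what you supply is the standard cocompact fundamental-domain sandwich, and it is complete and correct. Two details are worth keeping in mind: (1) the inclusions $\gamma F\subset B_{t-r}B_r\subset B_t$ and $B_tF^{-1}\subset B_{t+r}$ both use $F\subset B_r=B_r^{-1}$, which you have, and left-invariance to get $B_sB_t\subset B_{s+t}$; (2) the existence of a relatively compact fundamental domain requires $\Gamma$ to be uniform, which you correctly reduce to the classical fact (Malcev/Mostow) that every lattice in a simply connected nilpotent Lie group is cocompact — homogeneous Lie groups are $1$-connected nilpotent, so this applies. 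The final step (dividing by $t^\kappa$, using $(t\pm r)^\kappa/t^\kappa\to 1$) yields $\lim_{t\to\infty}|\Gamma\cap B_t|/t^\kappa=m_G(B_1)/m_G(F)$, which is exact polynomial growth of $(\Gamma,d|_{\Gamma\times\Gamma})$ with the same exponent $\kappa$. Your closing check that $d|_{\Gamma\times\Gamma}$ is adapted is also right and worth stating.
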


At this point we can easily deduce Theorem~\ref{Intro1} from the introduction.

\begin{proof}[Proof of Theorem~\ref{Intro1}] In view of Theorem~\ref{LRConvenient} and the fact that linear repetitivity implies almost linear repetitivity we only have to show that every homogeneous metric $d$ is adapted and that $G$ has exact polynomial growth with respect to $d$.  The former follows from  \cite[Proposition~3.1.37]{FR16}, and the latter follows from 
Proposition~\ref{HomogeneousExact}.
\end{proof}

Note that Proposition~\ref{HomogeneousExact} generalizes Example~\ref{SayMyName}, since the Cygan-Kor{\'a}nyi metric is a homogeneous metric on the Heisenberg group. Our next goal is to characterize all groups which admit an adapted metric of exact polynomial growth.

In geometric group theory, a finitely generated group $\Gamma$ is said to have \emph{polynomial growth} provided for some (hence any) finite generating set $S$ there exist constants $C\geq 1$ and $k \in \mathbb N$ such that $|S^n| \leq C \cdot n^k$ for all $n \in \mathbb N$. More generally, if $G$ is a compactly-generated lcsc group, then we say that $G$ has \emph{polynomial growth} if for some (hence any) compact symmetric generating set $\Omega$ and Haar measure $m_G$ on $G$ there exist constants $C\geq 1$ and $k \in \mathbb N$ such that
\[
m_G(\Omega^n) \leq C \cdot n^k.
\]
We warn the reader that while finitely-generated discrete groups of polynomial growth are virtually nilpotent by a celebrated theorem of Gromov, this need not be the case in the compactly-generated lcsc groups of polynomial growth. However, such groups are still virtually compact-by-solvable, and hence amenable. A more precise structure theory for compactly-generated lcsc groups of polynomial growth was developed only fairly recently by Breuillard \cite{Bre14}. One consequence of this structure theory is the following volume growth formula, which is a special case of \cite[Theorem~1.1]{Bre14}:

\begin{theorem}[Breuillard's volume growth formula, {\cite[Theorem~1.1]{Bre14}}] \label{Breuillard-thm} Let $G$ be an lcsc group of polynomial growth. Then there exists $d(G) \in \mathbb N_0$ and
for every compact symmetric generating set $\Omega$ there exists a constant $c(\Omega) > 0$ such that
\begin{equation}\label{Breuillard}
\lim_{n \to \infty} \frac{m_G(\Omega^n)}{n^{d(G)}} = c(\Omega).
\end{equation}
\end{theorem}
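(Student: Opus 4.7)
The plan is to establish the formula in stages, reducing the general lcsc case to an asymptotic cone analysis on a simply connected nilpotent Lie group. First, one invokes the structure theory for compactly-generated lcsc groups of polynomial growth (Losert, extending Gromov, and further refined in Breuillard's work) to produce a compact normal subgroup $K \triangleleft G$ such that $G/K$ admits a continuous homomorphism with compact kernel into a simply connected nilpotent Lie group $N$, with cocompact image. Replacing $G$ by $N$ changes $m_G(\Omega^n)$ only up to a multiplicative constant and up to replacing $\Omega$ by the image of $\Omega K$ thickened by a compact fundamental domain; these perturbations are absorbed by the limit, so it suffices to prove the asymptotic formula for a compact symmetric identity neighborhood $\Omega$ in $N$.

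Next, to $N$ one associates its Pansu graded group $N_\infty$. Writing $\mathfrak{n} = \mathfrak{n}_1 \supseteq [\mathfrak{n},\mathfrak{n}] = \mathfrak{n}_2 \supseteq \dots$ for the lower central series, one forms the graded Lie algebra $\mathfrak{n}_\infty = \bigoplus_i (\mathfrak{n}_i / \mathfrak{n}_{i+1})$, which carries a canonical one-parameter family of dilations $D_\lambda$ acting by $\lambda^i$ on the $i$-th layer. Transporting this to the Lie group $N_\infty$ via the exponential map yields a left-invariant Carnot-Carathéodory metric $d_\infty$ determined by the image of $\Omega$ in $\mathfrak{n}_1/\mathfrak{n}_2$. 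Exact self-similarity under $D_\lambda$ gives
\[
m_{N_\infty}(B_t^\infty) \;=\; t^{d}\cdot m_{N_\infty}(B_1^\infty),
\]
where $B_t^\infty$ is the closed $d_\infty$-ball of radius $t$ and $d = \sum_i i\cdot \dim(\mathfrak{n}_i / \mathfrak{n}_{i+1})$ is the homogeneous dimension. This $d$ will be the integer $d(G)$ of the theorem.

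The heart of the argument is Pansu's theorem that the rescaled pointed metric spaces $\bigl(N, \tfrac{1}{n} d_\Omega\bigr)$ converge in the Gromov-Hausdorff sense to $(N_\infty, d_\infty)$; in suitable exponential-type coordinates this amounts to saying that $\tfrac{1}{n}\Omega^n$ converges in Hausdorff distance to $B_1^\infty$. The main obstacle, and the technical heart of the proof, is to upgrade Hausdorff convergence of sets to convergence of Haar volumes. One needs that the boundary $\partial B_1^\infty$ is negligible in the quantitative sense $m_{N_\infty}(\{x : d_\infty(x,\partial B_1^\infty)<\varepsilon\}) \to 0$ as $\varepsilon \to 0$. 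My approach here would be to exploit the exact $D_\lambda$-homogeneity to rule out positive-measure fractal boundary (a measure-theoretic boundary of positive density would contradict self-similarity), together with the subanalytic or tame nature of the $d_\infty$-unit ball coming from $\Omega$ being a compact symmetric neighborhood. Once boundary-negligibility is in hand, a dilation change of variables converts the Gromov-Hausdorff convergence into
\[
\frac{m_N(\Omega^n)}{n^d} \;\longrightarrow\; m_{N_\infty}(B_1^\infty) \;=:\; c(\Omega),
\]
and undoing the reduction to $N$ yields the stated formula for $G$ with $d(G) = d$. The delicate regularity of $\partial B_1^\infty$ (step three) is where essentially all of Breuillard's new input is needed; the reductions in steps one and two are by now fairly standard.
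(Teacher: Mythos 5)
The paper does not prove this statement: it is Breuillard's theorem, cited verbatim as \cite[Theorem~1.1]{Bre14}, and the authors use it as an external input (specifically, to derive Theorem~\ref{Bre2} and Theorem~\ref{PGImpliesEPG}). There is therefore no ``paper's own proof'' to compare your sketch against. What you have written is, in broad strokes, an accurate outline of how Breuillard actually proves his theorem: reduce via structure theory (Gromov--Losert, plus Breuillard's refinements for the lcsc case) to a simply connected nilpotent Lie group $N$; pass to the graded group $N_\infty$ with its dilations and Carnot--Carath\'eodory metric; invoke Pansu's Gromov--Hausdorff convergence of $\bigl(N, \tfrac1n d_\Omega\bigr) \to (N_\infty, d_\infty)$; and upgrade set-convergence to volume-convergence by showing the limit ball has negligible boundary.

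One point where your sketch wobbles: the appeal to ``subanalytic or tame'' regularity of $\partial B_1^\infty$ is neither established in this generality nor needed. The correct and much simpler closing argument is purely measure-theoretic: by the dilation formula $m_{N_\infty}(B_t^\infty) = t^d m_{N_\infty}(B_1^\infty)$, the function $t \mapsto m_{N_\infty}(B_t^\infty)$ is continuous, and since the spheres $\partial B_t^\infty$ for $t$ in a compact interval are pairwise disjoint and carried into one another by dilations, at most countably many could have positive measure while self-similarity would force all to have positive measure simultaneously; hence $m_{N_\infty}(\partial B_1^\infty)=0$, and continuity of measure then gives the $\varepsilon$-collar estimate you need. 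Your self-similarity idea is the right one; just be aware that invoking tameness of CC balls is a red herring (and likely false without extra hypotheses). With that correction your reconstruction matches the actual route through Breuillard's paper.
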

Note that polynomial growth is a property of a (compactly-generated) lcsc group $G$, whereas the definition of exact polynomial growth also involves a choice of metric $d$. Nevertheless the two notions are closely connected:

\begin{theorem}[Polynomial growth vs.\ exact polynomial growth, cf.~\cite{Bre14}]
\label{Bre2} For a compactly-generated lcsc group $G$, the following conditions are equivalent:
\begin{enumerate}[(i)]
\item $G$ has polynomial growth.
\item $G$ admits an adapted metric $d$ of exact polynomial growth.
\end{enumerate}
\end{theorem}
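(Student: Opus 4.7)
The plan is to handle the two implications separately: the easy direction $(ii)\Rightarrow(i)$ by direct comparison with metric balls, and the harder direction $(i)\Rightarrow(ii)$ by constructing an adapted word metric and applying Breuillard's volume growth formula (Theorem~\ref{Breuillard-thm}).

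For $(ii)\Rightarrow(i)$, I would first observe that compact subsets of $G$ are bounded in any adapted metric $d$: by local boundedness each point admits a neighbourhood of finite $d$-diameter, so a finite subcover of a compact set combined with the triangle inequality yields a global bound on its diameter. Hence, given any compact symmetric generating set $\Omega$ for $G$, one has $\Omega\subseteq B_r$ for some $r>0$, so $\Omega^n\subseteq B_{rn}$ for every $n$, and the exact polynomial growth assumption gives $m_G(\Omega^n)\leq m_G(B_{rn}) = O(n^q)$ as $n\to\infty$, which is polynomial growth.

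For $(i)\Rightarrow(ii)$, I would pick a compact symmetric generating set $\Omega$ which contains an open identity neighbourhood (such a set always exists: start with any compact symmetric generating set and replace it by its product with a small relatively compact symmetric identity neighbourhood) and define the word metric
\[
d_\Omega(g,h) := \min\{n\geq 0 \,:\, g^{-1}h\in\Omega^n\}, \qquad \Omega^0 := \{e\}.
\]
This $d_\Omega$ is visibly left-invariant, and it is proper because $B_t = \Omega^{\lceil t\rceil-1}$ is a finite product of the compact set $\Omega$; it is locally bounded because, using $\Omega=\Omega^{-1}$, any two elements of the open neighbourhood $g\mathring{\Omega}$ of $g$ lie at word-distance at most $2$. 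Thus $d_\Omega$ is adapted. Theorem~\ref{Breuillard-thm} then supplies constants $d(G)\in\NN_0$ and $c(\Omega)>0$ with $m_G(\Omega^n)/n^{d(G)}\to c(\Omega)$. Since $B_t = \Omega^{\lceil t\rceil-1}$ for all $t>0$ and $(\lceil t\rceil-1)/t\to 1$ as $t\to\infty$, I obtain
\[
\frac{m_G(B_t)}{c(\Omega)\,t^{d(G)}}
\;=\; \frac{m_G(\Omega^{\lceil t\rceil-1})}{c(\Omega)\,(\lceil t\rceil-1)^{d(G)}}\cdot\Bigl(\tfrac{\lceil t\rceil-1}{t}\Bigr)^{\!d(G)}
\;\longrightarrow\; 1,
\]
which is exact polynomial growth with $C=c(\Omega)$ and $q=d(G)$.

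Since all the substance is already encapsulated in Breuillard's theorem, there is no deep obstacle once that result is invoked. The only delicate points are (a) verifying the three axioms of an adapted metric for $d_\Omega$, which is the reason for insisting that $\Omega$ contain an open identity neighbourhood (this is what secures local boundedness and hence genuine adaptedness), and (b) the routine passage from integer radii, where Breuillard's asymptotic is formulated, to arbitrary real radii, which is taken care of by the fact that the ball function $t\mapsto B_t$ is piecewise constant on intervals of the form $(n-1,n]$. The degenerate case $d(G)=0$ (where $G$ is compact) also fits in without modification, since then both sides of the asymptotic stabilize at $m_G(G)$ for large $t$.
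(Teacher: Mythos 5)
Your proposal is correct and takes essentially the same route as the paper: for $(ii)\Rightarrow(i)$ compare powers of a compact generating set with metric balls, and for $(i)\Rightarrow(ii)$ apply Breuillard's volume growth formula to a word metric. The paper's version of $(ii)\Rightarrow(i)$ rescales $d$ so that $B_1$ generates and then bounds $m_G(B_1^n)\leq m_G(B_n)$, while you instead bound a fixed generating set $\Omega\subseteq B_r$; these are interchangeable. For $(i)\Rightarrow(ii)$ the paper simply asserts that $d_\Omega$ ``is obviously adapted'' and stops at the integer asymptotic from Theorem~\ref{Breuillard-thm}, whereas you explicitly verify left-invariance, properness, and local boundedness (noting the need for $\Omega$ to contain an open identity neighbourhood so that local boundedness holds), and you carry out the passage from integer radii to arbitrary real radii via $(\lceil t\rceil-1)/t\to 1$. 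These are routine but genuine gaps in the paper's exposition that your write-up closes.
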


\begin{proof} (ii) $\implies$ (i): Since the balls $B_n$ exhaust $G$ and $G$ is compactly-generated, it is generated by some ball $B_N$; rescaling the metric by a positive constant if necessary, we may thus assume that $G$ is generated by $B_1$. Now let $q>0$ be chosen such that $d$-balls satisfy the condition $\lim_{t \to \infty} \frac{m_G(B_t)}{C t^{q}} = 1$ and pick an integer $k > q$. Then we can find $C' > 0$ such that $m_G(B_t) < C' t^{k}$ for all $t\geq 1$ and hence
\[
m_G(B_1^n) \leq m_G(B_n) < C' n^k,
\]
which shows that $G$ has polynomial growth.
\medskip

(i) $\implies$ (ii): Let $\Omega$ be a compact generating set of $G$. It then follows from Theorem~\ref{Breuillard-thm}, that the word metric $d_\Omega$ with respect to $\Omega$ has exact polynomial growth, and this metric is obviously adapted.
\end{proof}

Note that the above proof of the implication (i) $\implies$ (ii)  does not produce a \emph{continuous} adapted metric of exact polynomial growth, unless $G$ is discrete to begin with. The remainder of this subsection is devoted to the question, for which groups of polynomial growth one can actually produce  a \emph{continuous} adapted metric of exact polynomial growth and will not be needed in the sequel. We start our discussion with a simple example:

If $G$ is a connected Lie group of polynomial growth, then any choice of inner product on the Lie algebra of $G$ will induce a left-invariant Riemannian metric on $G$, and the corresponding metric $d$ will be continuous and adapted. This metric will have the additional property of being \emph{geodesic} in the sense that for all $x,x' \in G$ there exists an isometry $\gamma$ from an interval $I = [a,b] \subset \mathbb R$ to a path in $G$ such that $\gamma(a) = x$ and $\gamma(b) = x'$ (by a suitable version of the Hopf--Rinow theorem). For an arbitrary compactly-generated lcsc group $G$ one only knows that there exists a continuous adapted metric $d$ on $G$, which is \emph{large-scale geodesic} in the sense of \cite[p.10]{CdlH}, but it is unclear under which conditions this metric can be chosen to be geodesic in the strict sense.

\begin{theorem}[Exact polynomial growth with respect to a continuous metric]\label{PGImpliesEPG} Assume that a compactly-generated lcsc group $G$ of polynomial growth admits a geodesic (rather than just a large-scale geodesic) continuous adapted metric $d$. Then $G$ has exact polynomial growth with respect to any such metric. In particular, a connected Lie group admits a continuous adapted metric of exact polynomial growth if and only if it has polynomial growth.
\end{theorem}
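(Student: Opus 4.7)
The plan is to reduce the theorem to Breuillard's volume growth formula (Theorem~\ref{Breuillard-thm}) by using the geodesic property to identify closed metric balls with powers of a fixed compact generating set. Specifically, I will show that $\overline{B}_1$ is a compact symmetric generating set of $G$ and that $\overline{B}_n = \overline{B}_1^n$ for every $n \in \NN$; the volume growth formula applied to $\Omega := \overline{B}_1$ then gives $m_G(\overline{B}_n)/n^{d(G)} \to c(\overline{B}_1) =: C > 0$, and a standard sandwich argument upgrades this to exact polynomial growth for arbitrary real radii. Compactness and symmetry of $\overline{B}_1$ follow from properness of $d$ and from $d(e,x^{-1}) = d(x,e) = d(e,x)$, while generation follows from $\bigcup_n \overline{B}_n = G$, which is a consequence of $d$ being a genuine (i.e.\ finite-valued) metric.

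The key step is the identity $\overline{B}_n = \overline{B}_1^n$. The inclusion $\overline{B}_1^n \subseteq \overline{B}_n$ is immediate from the triangle inequality and left-invariance. For the converse, given $x \in \overline{B}_n$ with $s := d(e,x) \leq n$, the geodesic hypothesis produces an arc-length isometry $\gamma\colon [0,s] \to G$ with $\gamma(0) = e$ and $\gamma(s) = x$. Subdividing $[0,s]$ into $n$ equal pieces of length $s/n \leq 1$ and setting $y_i := \gamma((i-1)s/n)^{-1}\gamma(is/n)$ yields $d(e,y_i) = s/n \leq 1$ by the isometry property combined with left-invariance, and the telescoping product gives $x = y_1 \cdots y_n \in \overline{B}_1^n$. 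This subdivision is the one step that decisively uses the geodesic (as opposed to merely large-scale geodesic) nature of $d$, and it is where I expect the main conceptual subtlety to lie.

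For general $t > 0$, I will use the sandwich $\overline{B}_{\lfloor t\rfloor} \subseteq B_t \subseteq \overline{B}_t \subseteq \overline{B}_{\lceil t\rceil}$ together with $(\lfloor t\rfloor/t)^{d(G)}, (\lceil t\rceil/t)^{d(G)} \to 1$ as $t \to \infty$, which bounds $m_G(B_t)/t^{d(G)}$ between two sequences both tending to $C$; this establishes exact polynomial growth with parameters $q = d(G)$ and $C = c(\overline{B}_1)$. The ``in particular'' statement for connected Lie groups follows quickly: the converse direction is Theorem~\ref{Bre2}; for the forward direction, a connected Lie group of polynomial growth admits a left-invariant Riemannian metric (any inner product on the Lie algebra will do), whose induced distance is continuous, adapted, and complete by homogeneity, so Hopf--Rinow makes it geodesic and the main assertion of the theorem applies.
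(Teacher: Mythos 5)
Your proof is correct and follows the same route as the paper's: verify that $\Omega := \overline{B}_1$ is a compact symmetric generating set, prove the key identity $\overline{B}_n = \overline{B}_1^n$ by subdividing a geodesic (the paper's Lemma~\ref{BallPowers}), feed this into Breuillard's volume growth formula (Theorem~\ref{Breuillard-thm}), pass from integer to real radii by a floor/ceiling sandwich, and handle the ``in particular'' clause via a left-invariant Riemannian metric together with Hopf--Rinow. Two minor remarks: your uniform subdivision into $n$ pieces of length $s/n$ is a slight streamlining of the paper's unit-length pieces plus a remainder; and the stated inclusion $\overline{B}_{\lfloor t\rfloor}\subseteq B_t$ fails at integer $t$, but replacing it by $\overline{B}_{\lfloor t\rfloor-1}\subseteq B_t$ (or taking the limit over non-integer $t$) repairs this without affecting the conclusion, and in fact your sandwich avoids the unproved identity $m_G(\overline{B}_r)=m_G(B_r)$ that the paper's version invokes.
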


For the remainder of this section we assume that $d$ is a geodesic continuous adapted metric on a compactly-generated lcsc group $G$ of polynomial growth. The fact that $d$ is geodesic has the following consequence concerning balls in $G$.

\begin{lemma}
\label{BallPowers} 
The equality $\overline{B}_n = \overline{B}_1^n$ holds for all $n \in \mathbb N$.
\end{lemma}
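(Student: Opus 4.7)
The claim is an equality of two sets, so the natural strategy is to prove both inclusions separately, with the geodesic hypothesis playing a role only in one direction.

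The inclusion $\overline{B}_1^n \subseteq \overline{B}_n$ is elementary and does not use the geodesic property. I would argue by induction on $n$: if $g = g_1 \cdots g_n$ with $d(g_i,e) \leq 1$, then applying the triangle inequality together with left-invariance of $d$ (which gives $d(g_1 \cdots g_{n-1}, g_1 \cdots g_n) = d(e, g_n) \leq 1$), one obtains $d(g,e) \leq \sum_{i=1}^n d(g_i,e) \leq n$.

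For the reverse inclusion $\overline{B}_n \subseteq \overline{B}_1^n$, the idea is to subdivide a geodesic from the identity to a given point. Let $g \in \overline{B}_n$ and set $r := d(g,e) \leq n$. By the geodesic hypothesis on $d$, there is an isometry $\gamma \colon [0,r] \to G$ with $\gamma(0) = e$ and $\gamma(r) = g$. Put $t_i := ir/n$ for $0 \leq i \leq n$ and define
\[
g_i := \gamma(t_{i-1})^{-1} \gamma(t_i), \qquad 1 \leq i \leq n.
\]
Then $g_1 g_2 \cdots g_n = \gamma(0)^{-1}\gamma(r) = g$, while left-invariance of $d$ gives $d(g_i, e) = d(\gamma(t_{i-1}), \gamma(t_i)) = t_i - t_{i-1} = r/n \leq 1$, so each $g_i \in \overline{B}_1$ and thus $g \in \overline{B}_1^n$.

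The only subtle point, and the reason the hypothesis cannot be weakened to \emph{large-scale} geodesic, is the existence of the isometry $\gamma$ defined on a genuine interval of the real line realizing the exact distance $d(g,e)$; without this the subdivision argument producing factors of norm precisely $r/n$ breaks down. Everything else is routine manipulation with left-invariance and the triangle inequality.
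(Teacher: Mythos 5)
Your proof is correct and follows essentially the same strategy as the paper: subdivide a geodesic from $e$ to the target point and use left-invariance to bound the norms of the successive quotients. The only cosmetic difference is that you subdivide $[0,r]$ into $n$ equal pieces of length $r/n \leq 1$, whereas the paper subdivides at the integers $1,\dots,m = \lceil r\rceil -1$ (giving $m$ unit-length pieces plus one shorter final piece $\gamma(m)^{-1}x$); both give the decomposition into $\overline{B}_1$-factors. Your equal-subdivision variant is arguably slightly cleaner notationally.
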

\begin{proof} Let $x \in  \overline{B}_n$ with  $\alpha := \|x\|$ and $m := \lceil \alpha \rceil - 1 \leq n-1$. Choose a geodesic $\gamma$ in $G$ with $\gamma(0) = e$ and $\gamma(\alpha) = x$, and for $k = 1, \dots, n$ define $y_k := \gamma(k-1)^{-1}\gamma(k)$. Then by the left-invariance of the metric $d$ and the fact that 
	$d(e, y_k) = d(\gamma(k-1),\gamma(k)) = 1$, 
	we get 
	\[x = y_1 \cdots y_m (\gamma(m)^{-1}x) \in  \overline{B}_1^{m+1} \subset  \overline{B}_1^n,\] and hence $ \overline{B}_n \subset  \overline{B}_1^n$. The converse inclusion is immediate from left-invariance and the triangle inequality.
\end{proof}

In view of Breuillard's volume growth formula, this implies the theorem:

\begin{proof}[Proof of Theorem~\ref{PGImpliesEPG}]
	We apply \eqref{Breuillard} to $\Omega := \overline{B}_1$ and use that $m_G(\overline{B}_r)= m_G(B_r)$ for all $r > 0$. 
	For $q=d(G)$ and $c=c(\Omega)$ as in Theorem~\ref{Breuillard-thm} we obtain with Lemma~\ref{BallPowers} that 
	\[
	\lim_{n \to \infty} \frac{m_G(B_n)}{c \cdot n^q} = \lim_{n \to \infty} \frac{m_G(\overline{B}_n)}{c \cdot n^q} = \lim_{n \to \infty} \frac{m_G(\Omega^n)}{c \cdot n^q} = 1.
	\]
	Now for an arbitrary sequence $(t_n)$ with $\lim_{n \to \infty} t_n = \infty$, we have 
	\[
	1 = \lim_{n \to \infty} \frac{m_G(B_{\lfloor t_n \rfloor})}{c \cdot \big( \lfloor t_n \rfloor \big)^q} \cdot \lim_{n \to \infty} \frac{c \cdot \big( \lfloor t_n \rfloor \big)^q}{c \cdot \big( \lceil t_n \rceil \big)^q} \leq \liminf_{n \to \infty} \frac{m_G(B_{t_n})}{c \cdot t_n^q}.
	\]
	The converse inequality follows analogously.  
\end{proof}

\begin{remark}[Beyond connected Lie groups]
The simple argument used to establish Theorem~\ref{PGImpliesEPG} still carries through if $d$ is merely \emph{asymptotically geodesic} (see \cite[Thm.\ 14.3]{Nev06}), but it no longer works if $d$ is only assumed to be large-scale geodesic in the sense of \cite{CdlH}. In this case we can still find positive integers $a,b,c$ such that  for all $x,x' \in G$ there exist $k \in \mathbb N$ with $n \leq a d(x,x') + b$ and $x_0, \dots, x_k \in G$ with $x = x_0$, $x' = x_n$ and $d(x_{i-1}, x_i) \leq c$ for all $i \in \{1, \dots, k\}$, but this yields only the weaker statement
\[
\overline{B}_1^n \subset \overline{B}_n \subset \overline{B}_c^{an+b} \subset \overline{B}_c^{(a+b)n} \text{ for all }n \in \mathbb N
\]
instead of Lemma~\ref{BallPowers}. In the notation of Theorem~\ref{Breuillard-thm} this leads to the conclusion that
\[
c(\overline{B}_1) \leq \liminf_{n \to \infty} \frac{m_G(B_n)}{n^q} \leq \limsup_{n \to \infty} \frac{m_G(B_n)}{n^q} \leq c(\overline{B}_c^{(a+b)}),
\]
which in the language of \cite{Nev06} means that $G$ has \emph{strict} polynomial growth with respect to $d$. Our argument thus shows that every compactly-generated lcsc group $G$ of polynomial growth admits a \emph{continuous} adapted metric $d$ such that $G$ has \emph{strict} polynomial growth with respect to $d$. It seems to be an open problem 
for which classes of compactly-generated lcsc group of polynomial growth one can find a metric, which is both continuous and of exact polynomial growth, see the discussion in [Nev06, Section 4.4] and the references therein, in particular [Bre14] and [Pan83].
\end{remark}

\medskip

\section{Symbolic systems} \label{sec:symbolic}

In this section we describe how the previous considerations carry over to symbolic systems over a fintite alphabet and we give the proof of Theorem~\ref{thm:LRsymbolic}. 

\medskip

In the sequel, $\Gamma$ will denote an amenable, countable, discrete group. For the Haar measure on $\Gamma$ we fix the normalized counting measure $m_{\Gamma}$ which satisfies 
$m_{\Gamma}(\{e \}) = 1$ for the identity element $e \in \Gamma$. We will additionally assume that $\Gamma$ is a uniform lattice in an amenable lcsc group $H$, i.e.\@ $\Gamma$ is a discrete subgroup of $H$ whose quotient $H/\Gamma$ is compact in the quotient topology. This point of view is useful for constructing Delone sets in the ambient group $H$ via symbolic considerations over $\Gamma$. Moreover we do not lose any generality because we can always consider $\Gamma$ as a lattice in itself. 
We will also assume that $H$ carries adapted metric $d_H$ and its restriction to $\Gamma$ gives rise to an adapted $d_{\Gamma}$ on $\Gamma$ (which is automatically continuous since $\Gamma$ is discrete).

\medskip

We further fix a finite set $\mathcal{A}$, called  {\em alphabet}, which carries the discrete topology. Then $\mathcal{A}^{\Gamma}$ becomes a compact metrizable space when endowed with the product topology. We will call the elements in $\mathcal{A}^{\Gamma}$ {\em colorings} and the elements in $\mathcal{A}$ are called {\em colors}. 
 For any coloring $\mathcal{C} \in \mathcal{A}^{\Gamma}$ we can define its translation by $\gamma \in \Gamma$ as $(\gamma. \mathcal{C})(x) = \mathcal{C}(\gamma^{-1}x)$, for all $x \in \Gamma$. This leads to the notion of the  {\em hull of $\mathcal{C}$}, defined as  
\[
\Omega_{\mathcal{C}}:= \overline{\big\{ \gamma.\mathcal{C}:\, \gamma \in \Gamma \big\}},
\]
where the closure is taken in the product topology. Clearly, the hull is a compact metrizable space as well and $\Gamma$ acts on it via translations. We say that the hull $\Omega_{\mathcal{C}}$ is {\em uniquely ergodic} if it admits a unique $\Gamma$-invariant probability measure. Every $\mathcal{C} \in \mathcal{A}^{\Gamma}$ gives rise to canonical definition of (colored) patches and patterns. Writing $\mathcal{F}(\Gamma)$ for the collection of all finite subsets of $\Gamma$, for each $S \in \mathcal{F}(\Gamma)$ we denote the {\em $S$-patch of $\mathcal{C}$} as the restriction $\mathcal{C}_{|S}$ of the map $\mathcal{C}$ to the set $S$. We say that for $S,T \in \mathcal{F}(\Gamma)$, the corresponding patches of $\mathcal{C}$ are {\em equivalent} if there is some $\gamma \in \Gamma$ such that $\gamma S = T$ and $\mathcal{C}(\gamma s) = \mathcal{C}(s)$ for all $s \in S$. Given an $S$-patch of $\mathcal{C}$ we denote its equivalence class as the {\em pattern} of that $S$-patch. We write $[S]_{\mathcal{C}}$ for the collection of all patterns arising from all $\gamma S$-patches in $\mathcal{C}$. For $p \in [S]_{\mathcal{C}}$ and $T \in \mathcal{F}(\Gamma)$ we say that $p$ {\em occurs in $T$} if there are some representative $\mathcal{C}_{|xS}$ of $p$ and  $\gamma \in \Gamma$ such that $\gamma xS \subseteq T $ and $\mathcal{C}(\gamma xs) = \mathcal{C}(xs)$ for all $s \in S$.  
We point out that $[S]_{\mathcal{C}}$ is finite for some $\mathcal{C} \in \mathcal{A}^{\Gamma}$ since $\mathcal{A}$ is finite, i.e.\@ $\mathcal{C}$ is of finite local complexity with respect to\@ the above equivalence relation of patterns.

\medskip

	We will now relate symbolic colorings with weighted Delone sets in the ambient group $H$. To this end, we fix $\sigma \geq 1$ along with an injective map $\iota: \mathcal{A} \to [\sigma^{-1}, \sigma]$ and define the map 
	\[
	\mathcal{I}: \mathcal{A}^{\Gamma} \to \mathrm{Del}(U,K,\sigma), \quad \mathcal{C} \mapsto \sum_{\gamma \in \Gamma} \iota\big( \mathcal{C}(\gamma) \big) \cdot \delta_{\gamma}, 
	\]
	where $U$ open and $K$ are chosen such that $\Gamma$ is $(U,K)$-Delone in $H$.

	\begin{lemma} \label{lem:equivariant}
		The map $\mathcal{I}$ is a continuous, $\Gamma$-equivariant embedding. 
	\end{lemma}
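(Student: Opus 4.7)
The plan is to verify the three claims - being well-defined into $\mathrm{Del}(U,K,\sigma)$, $\Gamma$-equivariance, and topological embedding - in turn, and to observe that ``embedding'' follows for free from compactness.

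First I would fix the parameters $U$ and $K$: since $\Gamma$ is a uniform lattice in $H$, it is a Delone set in $H$, so there exists an open relatively compact $U\subset H$ and a compact $K\subset H$ such that the underlying set $\Gamma$ is $(U,K)$-Delone. For any $\mathcal{C}\in\mathcal{A}^{\Gamma}$ the weight function $\gamma\mapsto \iota(\mathcal{C}(\gamma))$ takes values in $[\sigma^{-1},\sigma]$ by assumption on $\iota$, so $\mathcal{I}(\mathcal{C})\in\mathrm{Del}(U,K,\sigma)$. Equivariance is a direct computation: for $\eta\in\Gamma$,
\[
\mathcal{I}(\eta.\mathcal{C})
 =\sum_{\gamma\in\Gamma}\iota\bigl(\mathcal{C}(\eta^{-1}\gamma)\bigr)\delta_\gamma
 =\sum_{\gamma'\in\Gamma}\iota\bigl(\mathcal{C}(\gamma')\bigr)\delta_{\eta\gamma'}
 =\eta.\mathcal{I}(\mathcal{C}),
\]
where in the middle step I substituted $\gamma'=\eta^{-1}\gamma$ and used that $\delta$ measures transform as $\eta_*\delta_{\gamma'}=\delta_{\eta\gamma'}$.

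Injectivity is immediate from the injectivity of $\iota$: if $\mathcal{I}(\mathcal{C})=\mathcal{I}(\mathcal{D})$ as Radon measures on $H$, then by comparing coefficients at the point mass $\delta_\gamma$ for each $\gamma\in\Gamma$ one finds $\iota(\mathcal{C}(\gamma))=\iota(\mathcal{D}(\gamma))$, hence $\mathcal{C}(\gamma)=\mathcal{D}(\gamma)$. For continuity, recall that the topology on $\mathrm{Del}(U,K,\sigma)$ is the restriction of the weak-$\ast$ topology on Radon measures on $H$, so it suffices to test with $f\in C_c(H)$. Suppose $\mathcal{C}_n\to \mathcal{C}$ in the product topology on $\mathcal{A}^{\Gamma}$, i.e.\ pointwise convergence. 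Fix $f\in C_c(H)$ with support in some compact $L\subset H$. Since $\Gamma$ is discrete, $F:=\Gamma\cap L$ is a finite set, and
\[
\int_H f\,d\mathcal{I}(\mathcal{C}_n)=\sum_{\gamma\in F}\iota\bigl(\mathcal{C}_n(\gamma)\bigr)f(\gamma).
\]
Pointwise convergence of $\mathcal{C}_n$ on the finite set $F$ (and in fact eventual equality, as $\mathcal{A}$ is discrete) combined with continuity of $\iota$ gives $\int f\,d\mathcal{I}(\mathcal{C}_n)\to \int f\,d\mathcal{I}(\mathcal{C})$, hence the claimed continuity.

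Finally, the ``embedding'' assertion comes free of charge: $\mathcal{A}^\Gamma$ is compact, $\mathrm{Del}(U,K,\sigma)$ is Hausdorff (being a subspace of the space of Radon measures in the weak-$\ast$ topology), and any continuous injection from a compact space into a Hausdorff space is automatically a homeomorphism onto its image. I do not anticipate any real obstacle here; the only mild subtlety is that one should verify continuity against test functions in $C_c(H)$ rather than via the neighbourhood basis $\mathcal{U}_{S,\delta}$, which avoids having to argue about how $d_S$ sees the weights directly.
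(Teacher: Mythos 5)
Your proof is correct and takes essentially the same route as the paper: injectivity from that of $\iota$, the same Dirac-comb computation for equivariance, and continuity by reducing to the finitely many lattice points in the support of a test function (the paper phrases this via $\varphi$ supported in $KW$ for finite $K\subset\Gamma$, but the content is identical to your argument). Your closing observation that the embedding property is automatic from compactness of $\mathcal{A}^\Gamma$ and Hausdorffness of the target makes explicit a step the paper leaves implicit here (it is invoked only in the subsequent Proposition on topological isomorphism), which is a small but worthwhile addition.
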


	\begin{proof}
		The injectivity of $\mathcal{I}$ is clear from the injectivity of $\iota$ and uniform discreteness of $\Gamma$ in $H$. As for the $\Gamma$-equivariance, take $\gamma_0 \in \Gamma$, $\mathcal{C} \in \mathcal{A}^{\Gamma}$ and $B \subseteq H$ Borel. Then
		\begin{align*}
		\gamma_0.\mathcal{I}(\mathcal{C})(B) 
			&= \mathcal{I}(\mathcal{C})(\gamma_0^{-1}B) 
			= \sum_{\gamma \in \Gamma} \iota \big( \mathcal{C}(\gamma) \big) \delta_{\gamma}(\gamma^{-1}_0 B) 
			= \sum_{\gamma \in \Gamma} \iota \big( \mathcal{C}(\gamma) \big)\delta_{\gamma_0 \gamma}(B) \\
			&= \sum_{\gamma \in \Gamma} \iota \big( \mathcal{C}( \gamma_0^{-1}\gamma) \big) \delta_{\gamma}(B) 
			= \sum_{\gamma \in \Gamma} \iota((\gamma_0. \mathcal{C})(\gamma)) \delta_{\gamma}(B) 
			= \mathcal{I}(\gamma_0 .\mathcal{C})(B). 
		\end{align*}
		It remains to show that $\mathcal{I}$ is continuous. To this end, suppose that $(\mathcal{C}_n)\subseteq \mathcal{A}^\Gamma$ is a sequence and $\mathcal{C}$ is an element in $\mathcal{A}^\Gamma$ such that $\lim_{n \to \infty} \mathcal{C}_n = \mathcal{C}$. Fix a compact neighborhood $W$ of $\{e\}$ in $H$ with $W \cap \Gamma = \{e\}$.  
		Since all values of $\mathcal{I}$ are measures supported on $\Gamma$ it suffices to show that 
		$\lim_{n \to \infty} \mathcal{I}(\mathcal{C}_n)(\varphi) = \mathcal{I}(\mathcal{C})(\varphi)$ for all $\varphi \in C_c(H)$ with the support of $\varphi$ being contained in $KW$, where $K \subseteq \Gamma$ is finite. Indeed, given a finite set $K$ and fixing such $\varphi$, we find $n_K \in \NN$ such that  we have ${\mathcal{C}_n}_{|K} = \mathcal{C}_{|K}$ holds. Therefore, $\mathcal{I}(\mathcal{C}_n)(\varphi) = \mathcal{I}(\mathcal{C})(\varphi)$ follows
		for all $n \geq n_K$. This finishes the proof. 
		\end{proof}
	
		\begin{proposition} \label{prop:isomorphic}
			For any $\mathcal{C} \in \mathcal{A}^{\Gamma}$ the dynamical systems $\Gamma \curvearrowright \Omega_{\mathcal{C}}$ and $\Gamma \curvearrowright \mathcal{H}_{\mathcal{I}(\mathcal{C})}$ are topologically isomorphic. 
		\end{proposition}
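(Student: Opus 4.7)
The plan is to prove that the $\Gamma$-equivariant continuous injection $\mathcal{I}$ from Lemma~\ref{lem:equivariant}, once restricted to $\Omega_{\mathcal{C}}$, gives a $\Gamma$-equivariant homeomorphism onto $\mathcal{H}_{\mathcal{I}(\mathcal{C})}$. Since the $\Gamma$-equivariance, continuity, and injectivity of $\mathcal{I}$ are already established, the only substantial remaining task is to identify the image $\mathcal{I}(\Omega_{\mathcal{C}})$ with the hull $\mathcal{H}_{\mathcal{I}(\mathcal{C})}$; the homeomorphism property then follows from a classical compactness argument.

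For the identification of spaces, I would argue by two inclusions. First, since $\Omega_{\mathcal{C}}$ is compact and $\mathcal{I}$ is continuous, the image $\mathcal{I}(\Omega_{\mathcal{C}})$ is a compact, hence closed, subset of $\operatorname{Del}(U,K,\sigma)$. By the $\Gamma$-equivariance of $\mathcal{I}$ and the $\Gamma$-invariance of $\Omega_{\mathcal{C}}$, this image is also $\Gamma$-invariant, and it contains the $\Gamma$-orbit of $\mathcal{I}(\mathcal{C})$. Therefore $\mathcal{H}_{\mathcal{I}(\mathcal{C})} \subseteq \mathcal{I}(\Omega_{\mathcal{C}})$. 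Conversely, for any $\mathcal{D} \in \Omega_{\mathcal{C}}$ we pick a net $(\gamma_n)\subseteq\Gamma$ with $\gamma_n.\mathcal{C} \to \mathcal{D}$; continuity and equivariance of $\mathcal{I}$ give $\gamma_n.\mathcal{I}(\mathcal{C}) = \mathcal{I}(\gamma_n.\mathcal{C}) \to \mathcal{I}(\mathcal{D})$. Since each $\gamma_n.\mathcal{I}(\mathcal{C})$ lies in $\mathcal{H}_{\mathcal{I}(\mathcal{C})}$ and the hull is closed, we obtain $\mathcal{I}(\mathcal{D}) \in \mathcal{H}_{\mathcal{I}(\mathcal{C})}$, establishing the reverse inclusion.

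With $\mathcal{I}(\Omega_{\mathcal{C}}) = \mathcal{H}_{\mathcal{I}(\mathcal{C})}$ in hand, the restriction $\mathcal{I}|_{\Omega_{\mathcal{C}}}: \Omega_{\mathcal{C}} \to \mathcal{H}_{\mathcal{I}(\mathcal{C})}$ is a continuous bijection between compact Hausdorff spaces, hence a homeomorphism. Together with the $\Gamma$-equivariance already provided by Lemma~\ref{lem:equivariant}, this yields the desired topological isomorphism of dynamical systems. Since everything reduces to routine consequences of Lemma~\ref{lem:equivariant} and general topological facts, there is no substantive obstacle to overcome; the only point requiring mild care is the convention that $\mathcal{H}_{\mathcal{I}(\mathcal{C})}$ refers to the orbit closure under the $\Gamma$-action (consistent with viewing $\Gamma$ as a uniform lattice in itself, as permitted by the setup of this section).
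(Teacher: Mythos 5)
Your proposal is correct and follows essentially the same route as the paper's proof: restrict $\mathcal{I}$ to $\Omega_{\mathcal{C}}$, inherit injectivity, continuity, and $\Gamma$-equivariance from Lemma~\ref{lem:equivariant}, establish $\mathcal{I}(\Omega_{\mathcal{C}}) = \mathcal{H}_{\mathcal{I}(\mathcal{C})}$ via compactness/closedness of the image together with containment of the orbit of $\mathcal{I}(\mathcal{C})$, and conclude via the standard fact that a continuous bijection between compact Hausdorff spaces is a homeomorphism. You spell out the reverse inclusion $\mathcal{I}(\Omega_{\mathcal{C}}) \subseteq \mathcal{H}_{\mathcal{I}(\mathcal{C})}$ a bit more explicitly (with the net argument), which the paper leaves largely implicit, but this is the same argument.
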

	
		\begin{proof}
			It suffices to show that the restriction $\tau$ of $\mathcal{I}$ to $\Omega_{\mathcal{C}}$ gives rise to a $\Gamma$-equivariant homeomorphism onto $\mathcal{H}_{\mathcal{I}(\mathcal{C})}$. It follows from Lemma~\ref{lem:equivariant} (with $H=\Gamma$) that $\tau$ is an injective $\Gamma$-map. Moreover, $\tau$ is continuous as a restriction of the map $\mathcal{I}$ from Lemma~\ref{lem:equivariant}. The restriction of  $\tau$ to the orbit $\{\gamma.\mathcal{C}:\, \gamma \in \Gamma\} \subseteq \Omega_{\mathcal{C}}$ is bijective onto its image $\{\gamma.\mathcal{I}(\mathcal{C}):\, \gamma \in \Gamma \} \subseteq \mathcal{H}_{\mathcal{I}(\mathcal{C})}$. Since $\tau(\Omega_{\mathcal{C}})$ must be compact and hence closed as an image of a compact set under a continuous map with values in a  Hausdorff space, we must have $\tau(\Omega_{\mathcal{C}}) = \mathcal{H}_{\mathcal{I}(\mathcal{C})}$. So $\tau$ is surjective as well. Since $\tau$ is a bijective continuous mapping between two compact Hausdorff spaces, $\tau$ is indeed a homeomorphism.
		\end{proof}
	
	\medskip
	
	We transfer the repetitivity concepts from Definitions~\ref{defi:rep} and~\ref{DefATR} to symbolic systems. Since colorings $\mathcal{C} \in \mathcal{A}^{\Gamma}$ are of finite local complexity with respect to\@ the pattern equivalence relation described above we can find symbolic analogues of (linear) repetitivity and tempered repetitivity. 
	Recall that $H$ carries 
	an adapted metric $d_H$ that restricts to an adapted metric $d_{\Gamma}$ on $\Gamma$.
	
	For $r  > 0$ we denote by $B^{H}_r(h)$ the collection of points in $H$ with $d_{H}$-distance from the identity $h \in H$ being less or equal than $r$. We analogously define $B^{\Gamma}_r(\gamma)$ for $\gamma \in \Gamma$. If $h=e$ or $\gamma = e$ we just write $B^{H}_r$ and $B^{\Gamma}_r$, respectively. 
	
	\begin{definition}[(Linear) repetitivity for symbolic systems]
	\label{Def-SymbLR}
		In the situation above we say that $\mathcal{C} \in \mathcal{A}^{\Gamma}$ is {\em symbolically repetitive with respect to $d_{\Gamma}$} if for every $r \geq 1$ there is some $R= R(r) > 0$ such that every pattern $p \in [B_r^{\Gamma}]_{\mathcal{C}}$ occurs in $B_{R}(\gamma)$ for all $\gamma \in \Gamma$. We say that $\mathcal{C}$ is {\em symbolically linearly repetitive} if it is symbolically repetitive and there is a $C \geq 1$ such that one can choose $R(r) = Cr$ for all $r\geq 1$.
	\end{definition}
	
	 We now fix a {\em F{\o}lner exhaustion sequence}, i.e.\@ a F{\o}lner sequence $\mathcal{T} = (T_m)$ in $\Gamma$ with $e \in T_m \subsetneq T_{m+1}$ for all $m \in \NN$. Then $\mathcal{T}$ is also a strong F{\o}lner sequence by \cite[Lemma~2.7~(d)]{PS16}, hence a strong F{\o}lner exhaustion sequence in $\Gamma$.  
	
	\begin{definition}[Tempered repetitivity of symbolic systems] \label{defi:symbolicallytemperedrep}
		In the situation described above we define the {\em repetitivity index $\mathcal{R}^{\mathcal{T}}_{\mathcal{C}}: \NN \to \NN \cup \{+ \infty\}$ for $\mathcal{C}$ with respect to $\mathcal{T}$}  as 
		\[
		\mathcal{R}^{\mathcal{T}}_{\mathcal{C}}(m) := \inf\big\{n \in \NN:\,\, \mbox{every pattern } p \in [T_m^{-1}]_{\mathcal{C}} \mbox{ occurs in } \gamma T_n^{-1} \mbox{ for all } \gamma \in \Gamma\big\}.  
		\] 	
		We say that $\mathcal{C}$ is {\em symbolically tempered repetitive with respect to\@ $\mathcal{T}$} if 
		\[
		\inf_{m \in \NN} \frac{m_{\Gamma}(T_m)}{m_{\Gamma}\big( T_{\mathcal{R}^{\mathcal{T}}_{\mathcal{C}}(m)} \Big)} > 0. 
		\]
	\end{definition}
	
	\begin{proposition} \label{prop:shift-rep}
		For $\mathcal{C} \in \mathcal{A}^{\Gamma}$ we have the following assertions. 
		\begin{enumerate}[(a)]
			\item If $\mathcal{C}$ is symbolically (linearly) repetitive with respect to\@ $d_{\Gamma}$, then $\mathcal{I}(\mathcal{C})$ is (linearly) repetitive with respect to\@ $d_{H}$.  
			\item Let $H= \Gamma$ and let $\mathcal{T}$ be a F{\o}lner exhaustion sequence. Then
			$\mathcal{C}$ is symbolically tempered repetitive with respect to\@  $\mathcal{T}$ if and only if  $\mathcal{I}(\mathcal{C})$ is tempered repetitive with respect to\@ $\mathcal{T}$. 
		\end{enumerate}
	\end{proposition}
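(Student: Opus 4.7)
My plan has two components, one for each part of the proposition. The unifying observation is that, since $\iota:\mathcal{A}\to[\sigma^{-1},\sigma]$ is injective with strictly positive values, a patch of $\mathcal{I}(\mathcal{C})$ with support $S\subseteq H$ encodes exactly the restriction $\mathcal{C}|_{\Gamma\cap S}$. Moreover, when $S_1=\gamma_1 S$ and $S_2=\gamma_2 S$ with $\gamma_i\in\Gamma$, any element $\tilde g\in H$ realizing the equivalence $\tilde g_*\mu_1=\mu_2$ must map $\gamma_1\in\supp(\mu_1)\subseteq\Gamma$ to a point of $\supp(\mu_2)\subseteq\Gamma$, forcing $\tilde g\in\Gamma$; hence Delone-equivalence of such patches reduces to symbolic equivalence of the underlying $\mathcal{C}$-patches.

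For part~(a), given an $\mathcal{I}(\mathcal{C})$-patch of support $gB_R^H$ with arbitrary $g\in H$, use $D$-relative denseness of $\Gamma$ in $H$ to write $g=\gamma h_0$ with $\gamma\in\Gamma$ and $h_0\in B_D^H$, so that $gB_R^H\subseteq \gamma B_{R+D}^H$. The Delone pattern is then determined by the symbolic $B_{R+D}^\Gamma$-pattern of $\mathcal{C}$ at $\gamma$. Applying symbolic repetitivity at radius $R+D$, for any target $\gamma''\in\Gamma$ we obtain $\gamma'\in\Gamma$ with $\gamma' B_{R+D}^\Gamma\subseteq B_{R^*}^\Gamma(\gamma'')$ on which $\mathcal{C}$ agrees (via the translation $\gamma'\gamma^{-1}\in\Gamma$) with the pattern at $\gamma$. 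A direct computation shows that the $\mathcal{I}(\mathcal{C})$-patch on $\gamma' h_0 B_R^H$ is Delone-equivalent to the original, and its support lies in $\gamma' B_{R+D}^H\subseteq B_{R^*+R+D}^H(\gamma'')$ by the triangle inequality and $d_H(\gamma',\gamma'')\leq R^*$. One final adjustment by $D$ accommodates arbitrary centres $h\in H\setminus\Gamma$. If symbolic repetitivity is linear, $R^*=c(R+D)$, and the resulting bound $R^*+R+2D$ is linear in $R$ for $R\geq 1$, yielding linear repetitivity of $\mathcal{I}(\mathcal{C})$.

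For part~(b) with $H=\Gamma$, the argument collapses to matching definitions. Discreteness of $\Gamma$ forces $\mathring{T}_m^{-1}=T_m^{-1}$ for every $m$, so Delone patterns of $\mathcal{I}(\mathcal{C})$ of size $\mathring T_m^{-1}$ are patterns of $\mathcal{I}(\mathcal{C})$ on supports of the form $\gamma T_m^{-1}$ with $\gamma\in\Gamma$. By the preceding observation, Delone-equivalence of such patches coincides with symbolic equivalence of the associated $\mathcal{C}$-patches, and Delone-occurrence in $\gamma T_n^{-1}$ is literally symbolic-occurrence in $\gamma T_n^{-1}$. Consequently $\mathcal{R}^{\mathcal{T}}_{\mathcal{I}(\mathcal{C})}(0,m)=\mathcal{R}^{\mathcal{T}}_{\mathcal{C}}(m)$ for every $m\in\NN$, so the Delone tempered-repetitivity condition $\zeta(0)>0$ is identical to the symbolic tempered-repetitivity condition.

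The only technical bookkeeping sits in part~(a), namely tracking how the radius $R$ enlarges to $R+D$ through the reduction and returns to a linear bound in $R$ when $R\geq 1$; the identification of equivalence relations in the first paragraph is the only conceptual step and is essentially forced by the injectivity of $\iota$ together with the uniform discreteness of $\Gamma$ in $H$.
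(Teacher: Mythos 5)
Your proof is correct and follows the same strategy as the paper's: in part~(a) you use cocompactness of $\Gamma$ in $H$ to enlarge the patch support to a $\Gamma$-centered ball, apply symbolic repetitivity to the enlarged symbolic patch, and track the resulting radius inflation; in part~(b) you observe that with $H=\Gamma$ the Delone and symbolic repetitivity indices literally coincide, which is what the paper means by ``clear from the definitions.'' Your bookkeeping in part~(a) is in fact slightly more careful than what is printed---the bound $R^*+R+2D$ (with $D$ playing the role of the paper's $r_0$) correctly records the extra radius of the transported $H$-ball, whereas the paper's stated bound $R(r+r_0)+r_0$ appears to drop an additive $r+r_0$---though this discrepancy is harmless for concluding linear repetitivity.
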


	\begin{remark}
		It is not difficult to come up with a more general statement for~(b) covering also cases where $H \neq \Gamma$. This is however a technical point that we do not need for the purposes of this work.
	\end{remark}

	\begin{proof}
		We first prove statement~(a). To this end we fix some 
		$r \geq 1$ along with some $r$-patch $P_{\mathcal{C}} := \big(\mu,\, B^H_r(g) \big)$ of $\mathcal{I}(\mathcal{C})$, where $g \in H$ and $\mu = \sum_{\gamma \in \Gamma \cap B_r^H(g)} \iota (\mathcal{C}(\gamma)) \delta_{\gamma}$, where $\iota:\mathcal{A} \to (0,\infty)$ is as defined in Lemma~\ref{lem:equivariant}. Let $h \in H$. Since $\Gamma \leq H$ is cocompact and $d_H$ is locally bounded there is an $r_0 > 0$ such that $\Gamma B^H_{r_0} = H$.  Hence there are $\gamma_g,\gamma_{h} \in \Gamma$ such that $g \in B^H_{r_0}(\gamma_g)$ and $h \in B^H_{r_0}(\gamma_h)$.  We write $q_{\mathcal{C}}:= \mathcal{C}_{|B^H_r(g) \cap \Gamma} \in \mathcal{A}^{B^H_r(g) \cap \Gamma}$ for the corresponding patch in $\mathcal{C}$. Then $q_{\mathcal{C}}$ occurs in 
		$$
		\tilde{q}_{\mathcal{C}}:=\mathcal{C}_{|B^H_{r+r_0}(\gamma_g) \cap \Gamma} \in \mathcal{A}^{B^H_{r+r_0}(\gamma_g) \cap \Gamma}.
		$$ 
		Note that we also used here the fact that $B^H_r B^H_s \subseteq B^H_{r+s}$ for $r,s \geq 0$ by the left-invariance of the metric $d_H$.
		Since $d_{\Gamma}$ is the restriction of $d_H$ and since $\mathcal{C}$ is symbolically (linearly) repetitive there is some $R= R(r + r_0) > 0$ (which in the case of linear repetitivity can be chosen as $R(r) = Cr$ for some universal $C \geq 1$) such that the pattern of $\tilde{q}_{\mathcal{C}}$ occurs in $B^{\Gamma}_{R(r + r_0)}(\gamma_h)$. 
		Thus, we find by virtue of Lemma~\ref{lem:equivariant} that the pattern of $P_{\mathcal{C}}$ must occur in $B^H_{R(r + r_0) + r_0}(h)$. We now set $R^{\prime}(r) = r_0 + R(r + r_0)$ (and in the linearly repetitive situation we have $C^{\prime}:=r_0+C(1+r_0) \geq C$ such that $r_0 + C(r + r_0) \leq C^{\prime}r$ for all $r \geq 1$ and define $R^{\prime}(r) = C^{\prime} r$). In any case we see that the pattern of $P_{\mathcal{C}}$ occurs in $B^H_{R^{\prime}(r)}(h)$. This finishes the proof. 
	
		The validity of assertion~(b) is clear from the definitions and from Lemma~\ref{lem:equivariant}.
	\end{proof}

	\medskip

	We are now in position to prove the following
	which contains all assertions claimed in
	 Theorem~\ref{thm:LRsymbolic} as subcases.
	
	\begin{theorem} \label{thm:mainsymb}
		Let $\Gamma$ be a countable amenable group that is co-compactly embedded as a lattice in an lcsc group $H$ carrying an adapted metric $d_H$.
		Let $\mathcal{A}$ be a finite set and assume that $\mathcal{C} \in \mathcal{A}^{\Gamma}$. Then the following assertions hold. 
		\begin{enumerate}[(a)]
			\item If $\mathcal{C}$ is symbolically tempered repetitive with respect to some F{\o}lner exhaustion sequence, then the hull $\Omega_{\mathcal{C}}$ is minimal and uniquely ergodic. 
			\item If $\mathcal{C}$ is symbolically linearly repetitive with respect to the metric $d_{\Gamma}$ induced by $d_H$ and if $H$ has exact polynomial growth with respect to $d_H$, then the hull
			\[ 
			\mathcal{H}_{\mathcal{I}(\mathcal{C})} = \overline{\big\{ h.\mathcal{I}(\mathcal{C}):\, h \in H \big\}}^{w*}
			\]
			is minimal and uniquely ergodic, where $\mathcal{I}(\mathcal{C})$ is defined according to the map of Lemma~\ref{lem:equivariant}. 
			\item If $\mathcal{C}$ is symbolically linearly repetitive with respect to some adapted metric $d_{\Gamma}$ on $\Gamma$, and if $\Gamma$ has exact polynomial growth with respect to $d_{\Gamma}$, then the hull ${\Omega}_{\mathcal{C}}$ is minimal and uniquely ergodic. 
		\end{enumerate}
	\end{theorem}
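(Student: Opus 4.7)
The plan is to reduce all three assertions to results already established in the paper via the embedding $\mathcal{I}$ and the isomorphism of Proposition~\ref{prop:isomorphic}. Throughout, I use Proposition~\ref{prop:shift-rep} to transfer symbolic repetitivity properties of $\mathcal{C}$ to corresponding repetitivity properties of the weighted Delone set $\mathcal{I}(\mathcal{C})$, and then invoke the main theorems for weighted Delone sets.

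For part~(a), I first note that the symbolic statement concerns only $\Gamma$ and its action on $\mathcal{A}^\Gamma$, so without loss of generality I may take $H=\Gamma$ and regard $\Gamma$ as a cocompact lattice in itself. Suppose $\mathcal{C}$ is symbolically tempered repetitive with respect to a F\o lner exhaustion sequence $\mathcal{T}=(T_m)$ in $\Gamma$. By the sentence preceding Definition~\ref{defi:symbolicallytemperedrep}, $\mathcal{T}$ is automatically a strong F\o lner exhaustion sequence since $\Gamma$ is countable. Proposition~\ref{prop:shift-rep}(b) then says $\mathcal{I}(\mathcal{C})$ is tempered (hence almost tempered) repetitive with respect to $\mathcal{T}$. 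Theorem~\ref{Intro2} applies and yields that the hull $\mathcal{H}_{\mathcal{I}(\mathcal{C})}$ is minimal and uniquely ergodic. Proposition~\ref{prop:isomorphic} transports these properties back to $\Omega_{\mathcal{C}}$, completing the proof.

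For part~(b), I keep the ambient group $H$ and its adapted metric $d_H$ of exact polynomial growth. By Proposition~\ref{prop:shift-rep}(a), the hypothesis of symbolic linear repetitivity of $\mathcal{C}$ with respect to $d_\Gamma$ implies linear repetitivity of $\mathcal{I}(\mathcal{C})$ with respect to $d_H$. Since linear repetitivity is a special case of almost linear repetitivity (by the remark after Definition~\ref{DefLinRep}), Theorem~\ref{LRConvenient} directly yields that $\mathcal{H}_{\mathcal{I}(\mathcal{C})}$ is minimal and uniquely ergodic.

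For part~(c), I simply specialize part~(b) to $H=\Gamma$ equipped with the adapted metric $d_\Gamma$ of exact polynomial growth. This produces minimality and unique ergodicity of $\mathcal{H}_{\mathcal{I}(\mathcal{C})}$, which transfers to $\Omega_{\mathcal{C}}$ by Proposition~\ref{prop:isomorphic}. There is no serious obstacle here since all the substantive work has been done earlier: the sub-additive convergence theorem (Theorem~\ref{thm:abstr}) and its consequence Theorem~\ref{Intro2} handle the case of tempered repetitivity, and the metric reformulation of Theorem~\ref{LRConvenient} is available whenever exact polynomial growth holds. The only minor points to verify are that the ambient metric $d_H$ restricts to the adapted metric $d_\Gamma$ used in the hypothesis of symbolic linear repetitivity (which is built into the standing assumptions on $\Gamma$ and $H$), and that F\o lner exhaustion sequences in countable groups are automatically \emph{strong} F\o lner exhaustion sequences, which is \cite[Lemma~2.7~(d)]{PS16}.
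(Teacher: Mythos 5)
Your proposal is correct and follows essentially the same route as the paper: transfer symbolic repetitivity to Delone repetitivity via Proposition~\ref{prop:shift-rep}, invoke Theorem~\ref{Intro2} or Theorem~\ref{LRConvenient} for the weighted Delone set $\mathcal{I}(\mathcal{C})$, and transport minimality and unique ergodicity back to $\Omega_{\mathcal{C}}$ via the topological isomorphism of Proposition~\ref{prop:isomorphic}. The observations you add (reducing to $H=\Gamma$ in parts~(a) and~(c), and that F\o lner exhaustion sequences in countable groups are automatically strong) are exactly the ones implicit in the paper's argument.
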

	
	\begin{proof}
		We first prove the assertion~(a). By Proposition~\ref{prop:shift-rep}~(b) we find that $\mathcal{I}(\mathcal{C})$ is tempered repetitive as a weighted Delone set over $\Gamma$ with respect to some F{\o}lner exhaustion sequence. It follows from Theorem~\ref{Intro2} that the dynamical system $\Gamma \curvearrowright \mathcal{H}_{\mathcal{I}(\mathcal{C})}$ is minimal and uniquely ergodic. By Proposition~\ref{prop:isomorphic} these features also carry over to the system $\Gamma \curvearrowright \Omega_{\mathcal{C}}$. 
		
		In order to prove the assertion~(b) we note first that $\mathcal{I}(\mathcal{C})$ is linearly repetitive with respect to\@ $d_H$ as a weighted Delone set in $H$ by Proposition~\ref{prop:shift-rep}~(a). Since $H$ has exact polynomial growth with respect to\@ $d_H$ we infer from Theorem~\ref{LRConvenient} that the system $H \curvearrowright \mathcal{H}_{\mathcal{I}(\mathcal{C})}$ is minimal and uniquely ergodic. 
		
		Assertion~(c) follows from the assertion~(b) in the case $H = \Gamma$, while observing with  Proposition~\ref{prop:isomorphic} that the dynamical systems $\Gamma \curvearrowright \Omega_{\mathcal{C}}$ and $\Gamma \curvearrowright \mathcal{H}_{\mathcal{I}(\mathcal{C})}$ are topologically isomorphic.
	\end{proof}

\begin{proof}[Proof of Theorem~\ref{thm:LRsymbolic}]
The first statement is just assertion~(a) of the above theorem. The ``in particular''-statement of Theorem~\ref{thm:LRsymbolic} is exactly part~(c) of the above theorem.
\end{proof}

\section{Two applications} 
\label{sec:applications}

This section is devoted to the proofs of Corollaries~\ref{cor:Banachdensities} and~\ref{cor:IDS}.

\medskip

If $(T_m)$ is a (left) F\o lner sequence, then $\lim_{m}\frac{m_G(\partial_K(T_m))}{m_G(T_m)}=0$ for every compact $K \subseteq G$. In order to prove Corollary~\ref{cor:Banachdensities} and Corollary~\ref{cor:IDS}, we need that for a suitable boundary notion a similar fraction goes to zero for $(T_m^{-1})$ which will be a (right) F\o lner sequence. 
In light of this define the {\em (right)-$K$-boundary} of $S$ by
$$
\tilde{\partial}_K(T) := T K^{-1} \cap (G \setminus T) K^{-1}.
$$

The following is a simple modification of \cite[Lemma~2.3]{PogorzelskiThesis14} for $\partial_K$ replaced by $\tilde{\partial}_K$.

\begin{lemma}\label{lem:RightFolner}
Let $K,L,S,T\subseteq G$. Then the following assertion hold true.
\begin{itemize}
\item[(a)] $\tilde{\partial}_K(S\cup T) \subseteq \tilde{\partial}_K(T) \cup \tilde{\partial}_K(S)$,
\item[(b)] $\tilde{\partial}_K(S\setminus T) \subseteq \tilde{\partial}_K(T) \cup \tilde{\partial}_K(S)$,
\item[(c)] $\tilde{\partial}_K(T)\subseteq \tilde{\partial}_L(T)$ if $K\subseteq L$,
\item[(d)] {$\tilde{\partial}_K\big(\tilde{\partial}_L(T)\big) \subseteq \tilde{\partial}_{KL}(T)$},
\item[(e)] if $e\in K$, then $xK\subseteq \tilde{\partial}_{K^{-1}}(T)\cup T$ holds for each $x\in T$.
\end{itemize}
Furthermore, if $(T_m)$ is a strong F\o lner exhaustion sequence then $(T_m^{-1})$ is an exhaustion sequence and for every  compact $K \subseteq G$,
$$
\lim_{m\to\infty}\frac{m_G(\tilde{\partial}_K(T_m^{-1}))}{m_G(T_m^{-1})} = 0.
$$
\end{lemma}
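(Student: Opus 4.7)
The plan is to exploit the dual description of the right $K$-boundary: $g \in \tilde{\partial}_K(T)$ if and only if $gK$ meets both $T$ and $G \setminus T$. This mirrors the description of the (left) $K$-boundary $\partial_K(S)$ as $\{g : Kg \text{ meets both } S \text{ and } G\setminus S\}$, so the arguments for (a)--(e) will be direct pointwise manipulations analogous to those in \cite[Lemma~2.3]{PogorzelskiThesis14}. The final F\o lner-type estimate is the substantive part and is where unimodularity enters.

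For (a) and (b), I would fix $g$ in the left-hand side and argue by case analysis, using the identities $G\setminus(S\cup T) = (G\setminus S) \cap (G\setminus T)$ and $G\setminus(S\setminus T) = (G\setminus S) \cup T$, to deduce that $gK$ must meet both of $S$ and $G\setminus S$, or both of $T$ and $G\setminus T$. Item (c) is immediate, since $K\subseteq L$ gives $gK \subseteq gL$, so any intersection property survives the enlargement. For (d), if $g \in \tilde{\partial}_K(\tilde{\partial}_L(T))$, then some $gk \in gK$ already lies in $\tilde{\partial}_L(T)$, so $gkL$ meets both $T$ and $G\setminus T$; since $gkL \subseteq gKL$, this yields $g \in \tilde{\partial}_{KL}(T)$. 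For (e), given $x \in T$ and $y = xk \in xK$ with $y \notin T$, the element $y k^{-1} = x$ witnesses $yK^{-1} \cap T \neq \emptyset$, while $y \in yK^{-1}$ (using $e \in K$) together with $y \in G\setminus T$ gives $yK^{-1} \cap (G\setminus T) \neq \emptyset$, so $y \in \tilde{\partial}_{K^{-1}}(T)$.

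For the final statement, the key observation is the identity
\[
\tilde{\partial}_K(T^{-1}) = \bigl(\partial_{K^{-1}}(T)\bigr)^{-1},
\]
which follows by rewriting $gK \cap T^{-1} \neq \emptyset$ and $gK \cap (G \setminus T^{-1}) \neq \emptyset$ as $K^{-1}g^{-1} \cap T \neq \emptyset$ and $K^{-1}g^{-1} \cap (G\setminus T) \neq \emptyset$, i.e.\@ $g^{-1} \in \partial_{K^{-1}}(T)$. Since $G$ is unimodular, inversion is measure-preserving, so
\[
\frac{m_G\bigl(\tilde{\partial}_K(T_m^{-1})\bigr)}{m_G(T_m^{-1})} = \frac{m_G\bigl(\partial_{K^{-1}}(T_m)\bigr)}{m_G(T_m)} \xrightarrow[m\to\infty]{} 0,
\]
as $(T_m)$ is a strong left F\o lner sequence and $K^{-1}$ is compact. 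The exhaustion property of $(T_m^{-1})$ follows by applying the homeomorphism $x\mapsto x^{-1}$ to the chain $e \in T_m \subseteq \mathring{T}_{m+1}$ and to $\bigcup_m T_m = G$. No step presents a serious obstacle; the only conceptual ingredient beyond set-theoretic bookkeeping is the unimodularity of $G$, which is part of the standing hypotheses of Section~\ref{ergodictheorem}.
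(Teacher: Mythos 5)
Your proof is correct and follows essentially the same route as the paper: parts (a)--(e) are the same elementary set-theoretic verifications, and the final estimate rests on unimodularity in exactly the same way — your identity $\tilde{\partial}_K(T_m^{-1}) = \bigl(\partial_{K^{-1}}(T_m)\bigr)^{-1}$ is just a slightly more structured packaging of the paper's direct computation $m_G(\tilde{\partial}_K(T_m^{-1})) = m_G(KT_m \cap K(G\setminus T_m)) = m_G(\partial_{K^{-1}}(T_m))$.
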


\begin{proof}
Assertion (a)-(c) are immediate from the definition of $\tilde{\partial}_K(T)$. As for assertion~(d) we note that 
$$
\tilde{\partial}_K\big(\tilde{\partial}_L(T)\big)
	\subseteq \big\{ g\in G :\, \mbox{there is an } x\in K \mbox{ such that } gx\in \tilde{\partial}_L(T) \big\} 
	= \tilde{\partial}_L(T) K^{-1}.	
$$
The claim now follows from
\begin{eqnarray*}
\tilde{\partial}_L(T) K^{-1} &=& \big( TL^{-1} \cap (G \setminus T)L^{-1} \big) K^{-1} \subseteq TL^{-1}K^{-1} \cap (G \setminus T) L^{-1}K^{-1} \\
&=& T \big( KL \big)^{-1} \cap \big( G \setminus T \big)\big( KL \big)^{-1}.
\end{eqnarray*}
Assertion (e) is proven as follows. Let $x\in T$ and $g\in xK$. If $g\in T$ there is nothing to prove. Otherwise $g\in G\setminus T$ and so $g\in (G\setminus T)K$ as $e\in K$. Since $g\in TK$, we derive $g\in \partial_{K^{-1}}(T)$.

\medskip

Suppose $(T_m)$ is a strong F\o lner exhaustion sequence. Then clearly $(T_m^{-1})$ is an exhaustion sequence. Since $G$ is unimodular, $m_G(S)=m_G(S^{-1})$ holds for all $S\in\mathcal{RK}(G)$. Thus, 
$$
m_G\big(\tilde{\partial}_K(T_m^{-1}) \big)  = m_G\big( T^{-1}_m K^{-1} \cap (G \setminus T^{-1}_m) K^{-1} \big)
    =  m_G\big( KT_m  \cap K (G \setminus T_m)\big)
    = m_G\big( \partial_{K^{-1}}(T_m)\big)
$$
holds for each compact $K \subseteq G$ and $m \in \NN$. Hence, $\lim_{m\to\infty}\frac{m_G(\tilde{\partial}_K(T_m^{-1}))}{m_G(T_m^{-1})} = 0$ follows as $(T_m)$ is a strong F\o lner sequence and $m_G(T_m)=m_G(T_m^{-1})$.
\end{proof}

\medskip

\subsection*{Equality of upper and lower Banach density}

The upper and lower Banach density for a weighted Delone set $\Pi$ along a F\o lner sequence $(T_m)$ are defined by
$$
\overline{d}(\Pi) := \limsup_{m\to\infty} \sup_{g\in G} \frac{\delta_{\Pi}(gT_m^{-1})}{m_G(T_m)} \quad\mbox{ and }\quad
\underline{d}(\Pi) := \liminf_{m\to\infty} \inf_{g\in G} \frac{\delta_{\Pi}(gT_m^{-1})}{m_G(T_m)}.
$$
The above limit inferior and limit superior are actually limits by Lemma~\ref{lemma:mainaux} and Remark~\ref{rem:mainaux} as the almost sub-additive weight function $w(S,\Pi):=\delta_{\Pi}(S^{-1})$ is actually additive, see below. Invoking Theorem~\ref{thm:abstr}, we show in Corollary~\ref{cor:Banachdensities} that the upper Banach density and the lower Banach density coincide if $\Pi$ is an almost tempered repetitive weighted Delone set.

\medskip

\begin{proof}[Proof of Corollary~\ref{cor:Banachdensities}]
Let $\Lambda$ be a weighted Delone set in $G$ that is almost tempered repetitive with respect to a strong F\o lner exhaustion sequence $(T_m)$.
We seek to prove that	
\[
b_{\Lambda} := \lim_{m \to \infty} \frac{\delta_{\Pi}(T_m^{-1})}{m_G(T_m)} 
\]
exists uniformly in $\Pi \in \mathcal{H}_\Lambda$. Due to Theorem~\ref{thm:abstr} it suffices to show that $w:\mathcal{RK}(G)\times \mathcal{H}_\Lambda\to\mathbb{R},\, w(K,\Pi):= \delta_{\Pi}(K^{-1}),$ is an admissible (almost sub-)additive weight function. Recall that $\Pi$ is a Radon measure on $G$. Thus, $w$ satisfies (W1) and (W3) with  $B = \emptyset$ and $\theta=0$. Recall that $h_*\delta_{\Pi}(S):=\delta_{\Pi}(h^{-1}S)$ holds for any $h\in G$ and $S\in\mathcal{RK}(G)$. Let $K\in\mathcal{RK}(G)$, $h\in G$ and $\Pi\in\mathcal{H}_\Lambda$, then 
$$
w(Kh^{-1},h.\Pi) = h_*\delta_{\Pi}\big((Kh^{-1})^{-1}\big) = h_*\delta_{\Pi}\big(hK^{-1}\big) = \delta_{\Pi}(K^{-1}) = w(K,\Pi)
$$
holds implying (W4) with $I = \emptyset$ and $\vartheta=0$. 

\medskip

Recall that for each $\Pi=(Q,\beta)\in\mathcal{H}_\Lambda$, $Q$ is a $(U,K)$-Delone set where $e\in U$ holds without loss of generality. Let $Q$ be an arbitrary left-$U$-uniformly discrete subset of $G$. It is straightforward to show that there exists a $V\subseteq G$ open (only depending on $U$) satisfying $e\in V$, $\overline{V}^{-1}=\overline{V}$ and $x\overline{V}\cap y\overline{V}= \emptyset$ for all $x,y\in Q$ and every left-$U$-uniformly discrete set $Q$. 
By the assertion~(e) of 
 Lemma~\ref{lem:RightFolner} one obtains that for each
$S \in \mathcal{RK}(G)$ and $x\in S$ we have $x\overline{V}\subseteq \tilde{\partial}_{\overline{V}}(S)\cup S$. Thus,
$$
\sharp (S\cap Q) = \sum_{x\in S\cap Q} \frac{m_G(x\overline{V})}{m_G(\overline{V})} 
	= \frac{1}{m_G(\overline{V})}  m_G\left(\bigsqcup\nolimits_{x\in S\cap Q}x\overline{V} \right)
	\leq \frac{1}{m_G(\overline{V})} \big( m_G(\tilde{\partial}_{\overline{V}}(S)) + m_G(S)\big)
$$
follows.

\medskip

Let $L,K\in\mathcal{RK}(G)$ with $K\subseteq L$ and $\Pi=(Q,\beta)\in\mathcal{H}_\Lambda$. Since $\delta_{\Pi}(S)= \sum_{x\in Q} \beta(x)\delta_x(S)$, we obtain
\begin{eqnarray*}
\big|w(L,\Pi)-w(K,\Pi)\big| &=& \delta_{\Pi}\big((L\setminus K)^{-1}\big)
	\leq \sigma \cdot \sharp \big((L\setminus K)^{-1}\cap Q\big) \\
	&\leq& \frac{\sigma}{m_G(\overline{V})} \Big( m_G(\tilde{\partial}_{\overline{V}}(L^{-1})) + m_G(\tilde{\partial}_{\overline{V}}(K^{-1})) + m_G(L\setminus K) \Big),
\end{eqnarray*}
where we used $\beta(x)\leq \sigma$ and $\tilde{\partial}_{\overline{V}}(L^{-1}\setminus K^{-1}) \subseteq \tilde{\partial}_{\overline{V}}(L^{-1}) \cup \tilde{\partial}_{\overline{V}}(K^{-1})$.
By the symmetry of $\overline{V}$ and the unimodularity of $G$, we observe that 
\[
m_G\big(  \tilde{\partial}_{\overline{V}}(L^{-1}) \big) = m_G\big( L^{-1}\overline{V} \cap (G\setminus L^{-1})\overline{V} \big) =  m_G\big(\overline{V}L \cap \overline{V}(G\setminus L) \big) = m_G\big( \partial_{\overline{V}}(L) \big)
\]
and likewise for $K$.
Hence, $w$ satisfies (W2) with $\eta:= \frac{\sigma}{m_G(\overline{V})}$ and $J:=\overline{V}$. It remains to prove the Condition~(W5). 

\medskip

Let $\varepsilon>0$ and suppose that $V$ is chosen as before. Fix $0 < \delta < 1$ such that $B_{2\delta}\subseteq V$ and $\delta<\min\big\{\frac{\sigma^{-1}}{2},\frac{m_G(\overline{V})}{2 + 4\sigma}\varepsilon\big\}$. Due to Lemma~\ref{lem:RightFolner}, there is an $m_0\in\mathbb{N}$ such that $m_G(\tilde{\partial}_{\overline{V}^3}(T_m^{-1}))\leq \delta m_G(T_m^{-1})$ holds for $m\geq m_0$. We use the properties (a)-(d) subsequently.
Let $m\geq m_0$ and $\Pi=(Q,\beta), \Phi=(P,\alpha)\in \mathcal{H}_\Lambda$ be such that $d_{T_m^{-1}}(\Pi,\Phi)<\delta$ where as defined in the Equalities~\eqref{eqn:formuladistMeasure} and~\eqref{eqn:formuladist}, we have
$$
d_{T_m^{-1}}(\Phi,\Pi) = \inf\big\{ \delta > 0:\, \big| \delta_{\Phi}\big( B_{\delta}(x) \big) - \delta_{\Pi}\big( B_{\delta}(x) \big)  \big| < \delta  \mbox{ for all } x \in (T_m^{-1} \cap P) \cup (T_m^{-1} \cap Q) \big\}.
$$
Set $S_1:=T_m^{-1}\setminus \tilde{\partial}_{\overline{V}}(T_m^{-1})$. Thus, for each $x\in S_1 \cap P$, $x\overline{V} \subseteq T_m^{-1}$ follows. Let $x\in S_1 \cap P$. Since $Q$ is $U$-uniformly discrete, $B_{\delta}\subseteq V$ and
$$
\left|\sum_{y\in Q\cap B_{2\delta}(x)} \beta(y) - \alpha(x)\right|
	= \big|\delta_{\Phi}\big(B_{\delta}(x)\big) - \delta_{\Pi}\big(B_{\delta}(x)\big)\big|
	<\delta
	<\sigma^{-1},
$$
we conclude that there is a unique $y_x\in T_m^{-1} \cap Q$ such that $y_x\in B_{\delta}(x)$ and $|\alpha(x)-\beta(y_x)|<\delta$. 
Let $S_2:=T_m^{-1}\setminus \tilde{\partial}_{\overline{V}^2}(T_m^{-1})$. Then $y\in S_2 \cap Q$ implies $y\overline{V}\subseteq S_1$. Thus, $|\delta_{\Pi}(B_\delta(y))-\delta_{\Phi}(B_\delta(y))|<\delta<\sigma^{-1}$ implies as before that there is a unique $x_y\in P\cap S_1$ satisfying $x_y\in B_\delta(y)$ and $|\alpha(x_y)-\beta(y)|<\delta$. Consequently, $S_2 \cap Q\subseteq \{y_x:\, x\in S_1 \cap P\}$. Hence, 
$$
(T_m^{-1} \cap Q) \subseteq \{y_x:\, x\in S_1 \cap P\} \cup (\tilde{\partial}_{\overline{V}^2}(T_m^{-1}) \cap Q)
$$
follows.
With this at hand, we derive
\begin{align*}
\big| w(T_m,\Phi) - w(T_m,\Pi)\big| 
	&= \left| \sum_{x\in T_m^{-1}\cap P} \alpha(x) - \sum_{y\in T_m^{-1}\cap Q} \beta(x) \right|\\
	&\leq \sum_{x\in S_1\cap P} |\alpha(x)-\beta(y_x)| + \sigma\Big( \sharp \big(\tilde{\partial}_{\overline{V}}(T_m^{-1}) \cap P\big) + \sharp \big(\tilde{\partial}_{\overline{V}^2}(T_m^{-1})\cap Q\big) \Big).
\end{align*}
By Lemma~\ref{lem:RightFolner}~(c) and the choice $m_0$, we have $m_G(\tilde{\partial}_{\overline{V}}(T_m^{-1}))\leq \delta m_G(T_m^{-1})$.
Since $\sharp (S\cap D )\leq \frac{1}{m_G(\overline{V})}\big(m_G(\tilde{\partial}_{\overline{V}}(S)) + m_G(S)\big)$ for any $U$-uniformly set $D$ and any $S\in\mathcal{RK}(G)$, the previous considerations lead to
\begin{align*}
\big| w(T_m,\Phi) - w(T_m,\Pi)\big| 
	\leq &\frac{\delta}{m_G(\overline{V})} \big(m_G(\tilde{\partial}_{\overline{V}}(T_m^{-1})) + m_G(T_m^{-1})\big) 
	+ \frac{\sigma}{m_G(\overline{V})}\Big( m_G\big( \tilde{\partial}_{\overline{V}}\big(\tilde{\partial}_{\overline{V}}(T_m^{-1})\big)\big) \\
	&+ m_G(\tilde{\partial}_{\overline{V}}(T_m^{-1})) + m_G\big( \tilde{\partial}_{\overline{V}}\big(\tilde{\partial}_{\overline{V}^2}(T_m^{-1})\big)\big) + m_G\big(\tilde{\partial}_{\overline{V}^2}(T_m^{-1})\big) \Big).
\end{align*}
Since $\tilde{\partial}_I(\tilde{\partial}_J(S))\subseteq \tilde{\partial}_{IJ}(S)$ holds by Lemma~\ref{lem:RightFolner}~(d), the estimate
$$
\big| w(T_m,\Phi) - w(T_m,\Pi)\big| 
	\leq \frac{2+4\sigma}{m_G(\overline{V})} \delta m_G(T_m^{-1})
	\leq \varepsilon m_G(T_m)
$$
is derived using $m_G(T_m^{-1})=m_G(T_m)$ which holds as $G$ is unimodular. Thus, $w$ satisfies (W5).
\end{proof}

\subsection*{Uniform convergence of the IDS}

We briefly sketch the model outlined in \cite{LSV11} and in \cite{PS16}. 
In order to be consistent with the previous sections we will consider Cayley graphs with a left-invariant metric as well as pattern equivalence defined by left translations. (In the mentioned papers the authors consider translations from the right but also deal with left F{\o}lner sequences.)

\medskip

We suppose that $\Gamma$ is an amenable countable group generated by a finite symmetric set $S \subset \Gamma$. 
For the Haar measure on $\Gamma$ we fix the normalized counting measure $m_{\Gamma}$ which satisfies 
$m_{\Gamma}(\{e \}) = 1$ for the identity element $e \in \Gamma$.

\medskip

Recall for finitely generated groups, F{\o}lner sequences $(F_n)$ consist of non-empty finite sets such that for all $s \in S$, one has 
\[
\lim_{n \to \infty} \frac{m_{\Gamma}\big( F_n\, \triangle \, sF_n \big)}{m_{\Gamma}(F_n)} = 0.
\]
This definition is equivalent to the notion of strong F{\o}lner sequences in the above sense by \cite[Lemma~2.7~(d)]{PS16}. The generating set $S$ comes with a natural left-invariant word metric. Precisely, for $g,h \in \Gamma$, one defines
\[
d_S(g,h) = \min\{L \in \NN:\, \mbox{there are } s_1, s_2, \dots, s_L \in S \mbox{ s.t. } s_1s_2 \cdots s_L = g^{-1}h \}.
\]
In the following we write 
\[
B^S_n(g) := \{h \in \Gamma:\, d_S(g,h) < n\}
\]
for $g \in \Gamma$ and $n \in \NN$. 

\medskip

Each group $\Gamma$ generated by a symmetric set $S$ comes with a Cayley graph with vertices given by $\Gamma$ and two elements $g,h \in \Gamma$ are connected by an edge if there exists some $s \in S$ such that $gs = h$. We write $\operatorname{Cay}(\Gamma, S)$ for this graph. 

\medskip

We consider a {coloring} $\mathcal{C}\in\mathcal{A}^\Gamma$ of the group $\Gamma$ which is nothing but a map $\mathcal{C}: \Gamma \to \mathcal{A}$, where $\mathcal{A}$ is a finite set whose elements are called colors.  
  As demonstrated in Section~\ref{sec:symbolic}, by assigning to each color in $a \in \mathcal{A}$ a different positive number $\iota(a) > 0$ we can identify $\mathcal{C}$ with the weighted Delone set $I(\mathcal{C}):= (\Gamma, \delta_{\mathcal{C}})$,
  where $\delta_{\mathcal{C}} := \sum_{x \in \Gamma} \iota(\mathcal{C}(x))  \delta_{\{ x\}} $on $\Gamma$. 
  Invoking Lemma~\ref{lem:equivariant} and Proposition~\ref{prop:isomorphic} we consider a coloring $\mathcal{C}$ both as mappings $\Gamma \to \mathcal{A}$ and as weighted Delone sets in the above form. 
Given $\mathcal{C}$, we define its hull by
\[
\Omega_{\mathcal{C}} := \overline{\{g.\mathcal{C}:\, g \in \Gamma\}},
\]
where the closure is taken with respect to the product topology in $\mathcal{A}^\Gamma$ induced from the discrete topology on $\mathcal{A}$. 
We now consider a coloring $\mathcal{C}$ that is symbolically tempered repetitive with respect to a strong F{\o}lner exhaustion sequence as of Definition~\ref{defi:symbolicallytemperedrep}.
It follows from Theorem~\ref{thm:mainsymb}~(a) that $\Gamma \curvearrowright \Omega_{\mathcal{C}}$ is uniquely ergodic.
Given a finite set $E \in \mathcal{F}(\Gamma)$ we define the $E$-patch for $\mathcal{C}$ as in Section~\ref{sec:symbolic} above. 

Since characteristic functions over the cylinder sets associated with patches are continuous we derive from Theorem~\ref{thm:abstr} (in combination with Proposition~\ref{prop:isomorphic}) that for every colored patch $\mathcal{E} := \mathcal{C}_{|E}$ with $E \subset \Gamma$ finite, the limits
\[
\nu(\mathcal{E}) := \lim_{m \to \infty} \frac{1}{m_{\Gamma}(T_m)} \sum_{x \in T_m} 1_{\mathcal{E}}(x.\mathcal{D})
\]
exist for all $\mathcal{D} \in \Omega_{\mathcal{C}}$, where for $\mathcal{D}^{\prime} \in \Omega_{\mathcal{C}}$, one has   
$1_{\mathcal{E}}(\mathcal{D}^{\prime}) = 1$ if and only if $ \mathcal{D}_{|E}^{\prime}= \mathcal{E}$ and $1_{\mathcal{E}}(\mathcal{D}^{\prime}) = 0$ otherwise. This yields
\begin{eqnarray*}
	\nu(\mathcal{E}) &:=& \lim_{m \to \infty} \frac{1}{m_{\Gamma}(T_m)} \sum_{x \in T_m} 1_{\mathcal{E}}(x.\mathcal{C}) = \lim_{m \to \infty} \frac{m_{\Gamma}\big(\{x \in T_m:\, (x.\mathcal{C})_{|E} = \mathcal{E}\}\big)}{m_{\Gamma}(T_m)} \\
	&=& \lim_{m \to \infty} \frac{m_{\Gamma}\big(\{ x \in T_m:\, \mathcal{C}_{|x^{-1}E} \cong \mathcal{E} \}\big)}{m_{\Gamma}(T_m)},
\end{eqnarray*}
where we write $\mathcal{C}_{|x^{-1}E} \cong \mathcal{E}$ if the patches $\mathcal{C}_{|x^{-1}E}$ and $\mathcal{E}$ are equivalent.  
With no loss of generality we will assume that the identity $e$ is contained in $E$ since otherwise we can pick $a \in E$ and repeat the above argument with the coloring $\mathcal{D} := a.\mathcal{C}$ and the patch $\mathcal{C}_{|a^{-1}E}$ instead of $\mathcal{C}$ and $\mathcal{E}$. 
 Furthermore, the sequence $(T_m^{-1})$ is a right-F{\o}lner sequence, see Lemma~\ref{lem:RightFolner}. Thus,
\[
\lim_{m \to \infty} \frac{m_{\Gamma}\big(\{ y \in \Gamma: y^{-1}E \subseteq T^{-1}_m \}\big)}{m_{\Gamma}(T_m)} = \lim_{m \to \infty} \frac{m_{\Gamma}\big(\{ y \in T_m: y^{-1}E \subseteq T^{-1}_m \}\big)}{m_{\Gamma}(T_m)} = 1.
\]
Consequently, we arrive at 
\[
\nu(\mathcal{E}) = \lim_{m \to \infty} \frac{m_{\Gamma}\big(\{ z \in \Gamma:\, z^{-1}E \subseteq T_m^{-1} \, \wedge\, \mathcal{C}_{|z^{-1}E} \cong \mathcal{E} \}\big)}{m_{\Gamma}(T_m)}
\]
for every colored patch $\mathcal{E} := \mathcal{C}_{|E} $ with $E \subset \Gamma$ finite.
In the setting of \cite[Section~5]{PS16} with right translations of patches replaced by left translations of patches the latter condition is equivalent to 
\begin{eqnarray} \label{eqn:frequences}
\nu(\mathcal{E}) = \lim_{m \to \infty} \frac{\#^l_\mathcal{E}(T_m^{-1})}{m_{\Gamma}(T_m^{-1})}
\end{eqnarray}
for all patches  $\mathcal{E} := \mathcal{C}_{|E} $ with $E \subset \Gamma$ finite, where $\#^l_\mathcal{E}(T_m^{-1})$ counts the number of occurrences of the pattern of  $\mathcal{E}$ in $T_m^{-1}$. Recall that $(T^{-1}_m)$ is a right F{\o}lner sequence. The setting in \cite{PS16} is reversed: there, one deals with right occurrences of colored patterns, with a right-invariant word metric and with left F{\o}lner sequences.  

\medskip

We turn to operators on the given left Cayley graph.

\medskip

For a subset $W \subseteq \Gamma$ we write $\ell^2(W, \mathcal{S})$ for the space of functions 
$u: W \to \mathcal{S}$ satisfying $\sum_{W \in \Gamma} \|u(g)\|^2_{\mathcal{S}} < \infty$, where $\mathcal{S}$ is a finite 
dimensional Hilbert space with norm $\| \cdot \|_{\mathcal{S}}$.
For a finite set $L \subset \Gamma$ and an operator $H: \ell^2(\Gamma, \mathcal{S}) \to \ell^2(\Gamma, \mathcal{S})$ we set
\[
H[L]: \ell^2(L, \mathcal{S}) \to \ell^2(L, \mathcal{S}),\quad  u \mapsto p_L H i_L\, u,
\]
where $i_L: \ell^2(L, \mathcal{S}) \to \ell^2(\Gamma, \mathcal{S})$ and $p_L: \ell^2(\Gamma, \mathcal{S}) \to \ell^2(L, \mathcal{S}) $ are the canonical inclusion and projection operators defined as 
\[
i_L(u)(g) = \begin{cases}
u(g) & \mbox{if } g \in L, \\
0 & \mbox{otherwise}
\end{cases}
,
\quad\quad\quad \mbox{and} \quad\quad  p_L(v)(g) = v(g) \quad  \mbox{ for all } g \in L.
\]
For $g,h \in \Gamma$ we write $H(g,h) := p_{\{g\}}H i_{\{h\}}$.

\medskip

On $\ell^2(\Gamma, \mathcal{S})$ we now define the class of operators which is of interest to us.

\begin{definition}\label{defi:operators}
	Let $H:\ell^2(\Gamma, \mathcal{S}) \to \ell^2(\Gamma, \mathcal{S})$ be a self-adjoint operator with $\Gamma, S$ and $\mathcal{S}$ as above. Further suppose that $\mathcal{C}: \Gamma \to \mathcal{A}$ is a coloring. We say that $H$ {\em has finite hopping range} if there is some $M \in \NN$ such that $H(g,h) = 0$ whenever $d_S(g,h) \geq M$. We say that $H$ is {\em $\mathcal{C}$-invariant} if there is some $N \in \NN$ such that $H(xg, xh) = H(g,h)$ for $x,g,h \in \Gamma$ that satisfy 
	\[
	x \big( \mathcal{C}_{|B_N(g) \cup B_N(h)} \big) = \mathcal{C}_{|B_N(xg) \cup B_N(xh)}.
	\]
	If $H$ has finite hopping range and is $\mathcal{C}$-invariant then we call $R:= \max\{M;N\}$ the {\em overall range of} $H$.
	\end{definition}

\medskip

We now define the eigenvalue counting function for a self-adjoint operator $A:\mathcal{S} \to \mathcal{S}$ defined on finite dimensional Hilbert space $\mathcal{S}$. 
Specifically, the {\em cumulative eigenvalue counting function of $A$} is defined by 
\[
\operatorname{ev}(A)(E) := \sum_{\lambda \leq E} \operatorname{mult}(\lambda),
\]
where the sum is taken over the set of all eigenvalues of $A$ less or equal than $E$ and $\operatorname{mult}(\lambda)$ is the multiplicity of the eigenvalue $\lambda$. 

\medskip

In the sequel, we fix once and for all a self-adjoint operator $H:\ell^2(\Gamma, \mathcal{S}) \to \ell^2(\Gamma, \mathcal{S})$ that has finite hopping range and is $\mathcal{C}$-invariant with respect to a coloring $\mathcal{C}: \Gamma \to \mathcal{A}$ with overall range $R \in \NN$.

\medskip

 Given a finite non-empty set $F \subset \Gamma$ and $E \in \RR$ we define
\[
H_F := H[F^R] \quad\quad \mbox{and} \quad\quad \overline{N}(F ): \RR \to \NN, \quad \overline{N}\big(F\big)(E) := \operatorname{ev}\big( H_F \big)(E),
\] 
where $F^R := \{x \in F:\, B^S_{R+1}(x) \subseteq F\}$. Then the functions
\begin{eqnarray} \label{eqn:empiricaldist}
N_H(F): \RR \to [0,1], \quad E \mapsto \frac{\overline{N}(F)(E)}{m_{\Gamma}(F)\operatorname{dim}(\mathcal{S})}
\end{eqnarray}
are elements in the Banach space of bounded right continuous functions on $\RR$ with supremum norm, denoted by $\big(\operatorname{BRC}(\RR),\, \| \cdot \|_{\infty} \big)$. They denote the empirical eigenvalue distribution of the operators $H_F$ for non-empty finite sets $F \subset \Gamma$.
 Note that 
dividing by $\operatorname{dim}(\mathcal{S})$ gives the correct normalization to obtain numbers in $[0,1]$ since 
\[
\operatorname{dim} \big( \ell^2(F,\mathcal{S}) \big) = m_{\Gamma}(F) \cdot \operatorname{dim}(\mathcal{S})
\]
for finite $F \subset \Gamma$.

\medskip

\begin{proof}[Proof of Corollary~\ref{cor:IDS}]
For the proof of Corollary~\ref{cor:IDS}, suppose that the coloring $\mathcal{C}:\Gamma \to \mathcal{A}$ is symbolically tempered repetitive with respect to $(T_m)$. Let $H:\ell^2(\Gamma, \mathcal{S}) \to \ell^2(\Gamma, \mathcal{S}) $ be a self-adjoint, $\mathcal{C}$-invariant finite hopping range operator with overall range $R \in \NN$. Due to the existence of the frequencies as displayed in~\eqref{eqn:frequences} and taking into account the reversed model as in Sections~5 and~7 of \cite{PS16}, we obtain from Theorem~7.4 in \cite{PS16} that there is some $N_H \in \operatorname{BRC}(\RR)$ such that 
\[
\lim_{m \to \infty} \Big\| N_{H_m}  -N_H \Big\|_{\infty} = 0,
\]
where  $H_m = H[(T_m^{-1})^R]$ and the $N_{H_m}$ are the empirical eigenvalue distributions 
as of~\eqref{eqn:empiricaldist}.
\end{proof}

\medskip

\begin{remark}
	Theorem~7.4 in \cite{PS16} also shows that $N_H$ is the distribution function of a probability measure $\mu_H$ on $\RR$. 
	\end{remark}

\appendix

\section{Weight functions and convergence}\label{AppendixA}

In this appendix we give the proofs of Lemma~\ref{lemma:tilingswf} and of Lemma~\ref{lemma:mainaux}. 

\medskip

\begin{proof}[Proof of Lemma~\ref{lemma:tilingswf}]
	Fix $0 < \varepsilon < 1/10$, a compact $I \subseteq G$ as well as $\zeta = \varepsilon / 16$. Fix a strong F{\o}lner exhaustion sequence $(S_l)$. 
	We first choose $m_I \in \NN$ large enough	such that $m_G(\partial_I(S_k)) \leq \varepsilon m_G(S_k)$ for all $k \geq m_I$. Fix $n \in \NN$ with $n \geq m_I$. Using Theorem~\ref{thm:tiling}, we find prototiles $S_i^{\varepsilon} \in \{ S_l:\, l \geq \max\{i, n \} \}$
	 for $1 \leq i \leq N(\varepsilon)$ and $\delta_0 > 0$ such that every 
	$\big( S_{N(\varepsilon)}^{\varepsilon} S_{N(\varepsilon)}^{\varepsilon\, -1},\, \delta_0 \big)$-invariant compact subset $A \subseteq G$ can be $\varepsilon$-quasi tiled while satisfying the properties (T1) to (T4) given in the statement of Theorem~\ref{thm:tiling}. So in the following we assume that $A$ is  $\big( S_{N(\varepsilon)}^{\varepsilon} S_{N(\varepsilon)}^{\varepsilon\, -1},\, \delta_0 \big)$-invariant and also 
	$(L, \varepsilon)$-invariant, where $L \in \{J,B,I\}$ and we fix 
	finite center sets $C_i^A \subseteq A$ giving rise to an $\varepsilon$-quasi tiling. By Remark~\ref{rem:tiling}~(ii) we can also make sure that 
	\begin{equation} \label{eqn:inv}
	\frac{m_G(\partial_L(S_i^{\varepsilon}))}{m_G(S_i^{\varepsilon})} < \zeta^2, \quad \frac{m_G(\partial_L(\tilde{S}_i^{\varepsilon}(c)))}{m_G(S_i^{\varepsilon})} < 4 \zeta
	\end{equation}	
	for $L \in \{J,B\}$ and for all $1 \leq i \leq N(\varepsilon)$ and $c \in C_i^A$.
	
	We define 
	\[
	\Delta = \frac{1}{m_G(A)}\Bigg(v(A) - \sum_{i=1}^{N(\varepsilon)} \sum_{c \in C_i^A} v\big( S_i^{\varepsilon}c \big)  \Bigg),
	\]
	as well as $A_{\varepsilon} := A \setminus \bigcup_{i=1}^{N(\varepsilon)} \bigcup_{c \in C_i^A} \tilde{S}_i^{\varepsilon}(c)$.
	We obtain
	\begin{eqnarray*}
	m_G(A) \cdot \Delta &=& \Bigg( v(A) - \sum_{i,c} v(\tilde{S}^{\varepsilon}_i(c)) - v(A_{\varepsilon}) \Bigg) - \sum_{i,c} \big( v(S_i^{\varepsilon}c) -  v(\tilde{S}^{\varepsilon}_i(c)) \big) + v(A_\varepsilon).
\end{eqnarray*}		
	Furthermore, we use the sub-additivity property~(w3) of $v$, combined with the general inclusions
	\[
	\partial_E(C \cup D) \subseteq \partial_E(C)\, \cup \, \partial_E(D), \quad \partial_E(C \setminus D) \subseteq \partial_E(C)\, \cup \, \partial_E(D)
	\] 
	for general sets $C,D,E \in \mathcal{RK}(G)$
	and the triangle inequality 
	in order to  obtain 
	\begin{eqnarray*}
	m_G(A) \cdot \Delta &\leq& \theta(v) \Bigg( 2 \cdot \sum_{i,c} m_G\big( \partial_B(\tilde{S}^{\varepsilon}_i(c)) \big) + m_G(\partial_{B}(A))   \Bigg)  
	 + \sum_{i, c} \big| v(S_i^{\varepsilon}c) - v(\tilde{S}^{\varepsilon}_i(c) ) \big|  +  v(A_\varepsilon).
	\end{eqnarray*}
Next, we use the almost monotonicity condition~(w2) to find that 
\begin{align*}
	m_G(A) \cdot \Delta \leq &\theta(v) \Bigg( 2 \cdot \sum_{i,c} m_G\big( \partial_B(\tilde{S}^{\varepsilon}_i(c)) \big) + m_G(\partial_{B}(A))   \Bigg)  +  v(A_\varepsilon)  \\
	&+ \eta(v) \Big( \sum_{i, c} m_G\big( S_i^{\varepsilon}c \setminus  \tilde{S}^{\varepsilon}_i(c) \big) \Big)  + \eta(v) \sum_{i, c} m_G\big( \partial_J(\tilde{S}^{\varepsilon}_i(c)) \big) + \eta(v) \sum_{i, c} m_G\big( \partial_J({S}^{\varepsilon}_i c) \big).
\end{align*}

	In view of the conditions (w1) and (w2) (and the above set inclusions for boundaries), we also have
	\[
	v(A_{\varepsilon}) \leq |v(A_{\varepsilon})| \leq  \eta(v) \Bigg( m_G(A_{\varepsilon}) + m_G(\partial_J(A)) + \sum_{i,c} m_G\big( \partial_J(\tilde{S}^{\varepsilon}_i(c) \big) \Bigg).
	\]
	Plugging this into the above inequality and using $m_G(S_i^{\varepsilon}c \setminus \tilde{S}_i^{\varepsilon}(c)) < \varepsilon$ along with 
	the invariance conditions on the prototiles, cf.\@ Equality~\eqref{eqn:inv} and $\zeta^2 \leq 4\zeta$, we arrive at
\begin{align*}
m_G(A) \cdot \Delta \leq& \theta(v)  8 \zeta \sum_{i,c} m_G(S_i^{\varepsilon}c) +   \theta(v)m_G(\partial_B(A)) 
	+\eta(v) m_G(A_{\varepsilon})   +  \eta(v) m_G(\partial_J(A))\\
	&+ \eta(v) 4\zeta  \sum_{i,c} m_G(S_i^{\varepsilon}c)
	+ \eta(v) \varepsilon \sum_{i,c} m_G(S_i^{\varepsilon}c)
	+ \eta(v)4\zeta \sum_{i,c} m_G(S_i^{\varepsilon}c)\\
	&+ \eta(v)4\zeta \sum_{i,c} m_G(S_i^{\varepsilon}c). 
\end{align*}	
By the tiling property~(T4) of Theorem~\ref{thm:tiling} and using the fact that $\sum_i \sum_c m_G(S_i^{\varepsilon}c) \leq 2m_G(A)$, we get 
\begin{eqnarray*}
\Delta &\leq&  16 \theta(v) \zeta  + \theta(v) \frac{m_G(\partial_B(A))}{m_G(A)}  +  2 \eta(v) \varepsilon + \eta (v)\frac{m_G(\partial_J(A))}{m_G(A)} +  8 \eta(v) \zeta  \\
&& \quad  + 2 \eta(v) \varepsilon + 8\eta(v)\zeta + 8 \eta(v) \zeta.
 \end{eqnarray*}
We finally use the additional invariance assumptions on $A$, i.e.\@ $m_G(\partial_J(A)) \leq \varepsilon m_G(A)$ and $m_G(\partial_B(A)) \leq \varepsilon m_G(A)$. Hence, the choice $\zeta = \varepsilon / 16$ finally leads to 
 \[
 \Delta \leq 2 \theta(v) \cdot \varepsilon + 8 \eta(v) \cdot \varepsilon.
 \]
\end{proof}

\medskip

For the proof of Lemma~\ref{lemma:mainaux}
we need another lemma first. 

\medskip

\begin{lemma} \label{lemma:minimality}
	Let $w:\mathcal{RK}(G) \times X \to \RR$ be an almost sub-additive weight function satisfying (W5$^\ast$) and assume that $G \curvearrowright X$ is minimal. 
	Then for every strong F{\o}lner exhaustion sequence $(T_m)$ and for each $\varepsilon> 0$, there is some $m_0 \in \NN$ such that 
	\[
	\big| w^{+}(T_m, x) - w^{+}(T_m, y)   \big| < \varepsilon, \quad \big| w^{-}(T_m, x) - w^{-}(T_m, y)  \big| < \varepsilon
	\]
	for all $x,y \in X$ and for all $m \geq m_0$.
\end{lemma}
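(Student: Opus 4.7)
The plan is to combine the almost-equivariance axiom (W4) with the approximate uniform continuity of (W5$^\ast$) and with minimality of $G\curvearrowright X$ to show that for large $m$ the value $w^\pm(T_m,x)$ depends on $x\in X$ only up to an error that tends to zero as $m\to\infty$.

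First I would use (W4) with $K=T_m g$ and $h=g$, together with unimodularity of $G$ (which ensures $m_G(\partial_I(T_m g))=m_G(\partial_I(T_m))$), to obtain
\[
\bigl|w(T_m g,x)-w(T_m,gx)\bigr|\ \leq\ \vartheta\,m_G(\partial_I(T_m))
\qquad (g\in G,\ x\in X).
\]
Dividing by $m_G(T_m)$ and taking the supremum (respectively infimum) over $g\in G$ gives
\[
\Bigl|w^+(T_m,x)-\sup_{x'\in Gx}\tfrac{w(T_m,x')}{m_G(T_m)}\Bigr|\leq\alpha_m,\quad
\Bigl|w^-(T_m,x)-\inf_{x'\in Gx}\tfrac{w(T_m,x')}{m_G(T_m)}\Bigr|\leq\alpha_m,
\]
where $\alpha_m:=\vartheta\,m_G(\partial_I(T_m))/m_G(T_m)\to 0$ because $(T_m)$ is a strong F\o lner sequence.

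Second, for any $\varepsilon>0$ I would invoke (W5$^\ast$) to obtain $m_0\in\NN$ and, for each $m\geq m_0$, a $\delta_m>0$ such that $d(z,z')\leq\delta_m$ forces $|w(T_m,z)-w(T_m,z')|\leq\varepsilon\, m_G(T_m)$. Minimality of $G\curvearrowright X$ makes every orbit $Gx$ dense, so for every $x^\ast\in X$ some $x''\in Gx$ lies within distance $\delta_m$ of $x^\ast$. Together with (W5$^\ast$), this shows that $\sup_{x'\in Gx}w(T_m,x')/m_G(T_m)$ and $\inf_{x'\in Gx}w(T_m,x')/m_G(T_m)$ approximate the global supremum and infimum of $w(T_m,\cdot)/m_G(T_m)$ over $X$ to within $\varepsilon$; but these global quantities are manifestly independent of $x$. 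Combining with the first step yields $|w^\pm(T_m,x)-w^\pm(T_m,y)|\leq\varepsilon+2\alpha_m$ for all $x,y\in X$ and $m\geq m_0$, and since $\alpha_m\to 0$ and $\varepsilon>0$ was arbitrary, the claimed uniform estimate follows for $m$ sufficiently large.

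The main obstacle is the mismatch between (W5$^\ast$), which provides approximate uniform continuity only for the single function $w(T_m,\cdot)$, and the quantity $w^\pm(T_m,\cdot)$, which is a supremum (respectively infimum) over all right-translates $T_m g$ of the F\o lner set. The transfer identity furnished by (W4) is exactly what bridges this gap: it moves the group action from its position inside the supremum, where no continuity is available, to the second argument of $w$, where (W5$^\ast$) applies and density of orbits allows minimality to close the argument.
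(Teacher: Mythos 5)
Your proposal is correct and uses the same three ingredients in the same roles as the paper's proof: (W4) to transfer the right-translate $T_m g$ into the second argument (using unimodularity to control $m_G(\partial_I(T_m g))$), (W5$^\ast$) for approximate equicontinuity at scale $T_m$, and minimality for density of orbits. The only stylistic difference is that you pivot through the $x$-independent global supremum and infimum of $w(T_m,\cdot)/m_G(T_m)$ over $X$, whereas the paper picks a near-maximizer $h_x^m$ for $x$ and transports it by minimality to an orbit point $j_m y$ of $y$ directly; both routes give the same $\varepsilon + O(\alpha_m)$ estimate.
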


\begin{proof}
	Let $\varepsilon > 0$. We now choose  $m_0 \in \NN$ such that property~(W5$^\ast$) of Remark~\ref{rem:W5-} holds applied with $\varepsilon / 3$ instead of $\varepsilon$. 
		If necessary we increase $m_0$ such that $m_G(\partial_I(T_m))/m_G(T_m) \leq \varepsilon/(6 \vartheta)$ holds for all $m \geq m_0$, where $I \subseteq G$ is the compact subset of property~(W4) that only depends on $w$. 
	Let $m \geq m_0$ and fix $x,y \in X$. 
	By definition of $w^{+}$ and the property~(W4) of Definition~\ref{defi:weightfunction}, we find and element $h^m_x \in G$ such that
	\[
	\Bigg| \frac{w(T_m, h^m_x x)}{m_G(T_m)} - w^{+}(T_m,x)  \Bigg| < \frac{\varepsilon}{3} + \vartheta\cdot \frac{m_G(\partial_I(T_m))}{m_G(T_m)}. 
	\]
	Moreover, since the action $G \curvearrowright X$ is minimal, we find $j_m \in G$ such that 
	$d_{X}(h^m_x x, j_m y ) < \delta_m$ (with $\delta_m > 0$ chosen as in~(W5$^\ast$)), and by property~(W5$^\ast$) we arrive at
	\[
	\big| w(T_m, h^m_x x) - w(T_m, j_m  y)\big| < \varepsilon/3 m_G(T_m).
	\]
	Putting everything together and using again the property~(W4), we observe
	\begin{eqnarray*}
		w^{+}(T_m, x) &\leq&  \frac{w(T_m, h^m_x x)}{m_G(T_m)}  + \frac{\varepsilon}{3} + \vartheta \cdot \frac{m_G(\partial_I(T_m))}{m_G(T_m)} \\
		&\leq&  \frac{w(T_m j_m^{-1},  y)}{m_G(T_m)}  + \frac{2\varepsilon}{3} + 2 \vartheta \cdot \frac{m_G(\partial_I(T_m))}{m_G(T_m)}.
	\end{eqnarray*}
	By definition of $w^+$ and $m_G(\partial_I(T_m))/m_G(T_m) < \varepsilon (6\vartheta)^{-1}$, we conclude $w^{+}(T_m, x) \leq w^{+}(T_m, y) + \varepsilon$
	for all $m \geq m_0$.
	Interchanging the roles of $x$ and $y$ yields the statement for $w^{+}$. The assertion for $w^{-}$
	can be proven in the very same manner and we leave the details to the reader.
\end{proof}

\medskip

We are ready to give the proof of Lemma~\ref{lemma:mainaux}.

\medskip

\begin{proof}[Proof of Lemma~\ref{lemma:mainaux}]
 It suffices to show
	\[
	\limsup_{m \to \infty} {w^{+}}(T_m, x) \leq \liminf_{m \to \infty} {w^{+}}(T_m, x)
	\]
	for all $x\in X$. 
	Invoking Remark~\ref{rem:boundedness}, we see that both the above limsup and the liminf are contained in the interval $[-\eta, \eta]$ for all $x \in X$, where $\eta \geq 0$ is the constant as of (W2) in Definition~\ref{defi:weightfunction}. 
	So fix $x \in X$. We find a subsequence 
	$(S_l)$ of $(T_m)$ such that 
	\[
	\liminf_{m\to\infty}w^+(T_m,x) = \lim_{l\to\infty} w^+(S_l,x)\,.
	\]
	By property~(W4) there is a compact $I \subseteq G$ and $\vartheta \geq 0$ such that the almost-equivariance property for $w$ is satisfied.  
	Let  $0<\varepsilon<1/10$ and $N(\varepsilon):=\big\lceil -\log(\frac{\varepsilon}{1-\varepsilon})\big\rceil$. 
	We apply Lemma~\ref{lemma:tilingswf} with $B=B$, $I=I$, $J=J$, $\eta(v) = \eta$ and $\theta(v) = \theta$, 
	where $J$ and $\eta$ describe the almost monotonicity property~(W2) and $B$ and $\theta$ describe the almost sub-additivity property~(W3) of $w$. We choose $\ell \in \NN$ large enough such that $m_G(\partial_I(S_l)) \leq \varepsilon m_G(S_l)$
	for all $l \geq \ell$. 
	Hence, we find a collection of prototiles
	\[
	\{e\}\subseteq S_{n}\subseteq S_1^\varepsilon\subseteq\ldots\subseteq S_{N(\varepsilon)}^\varepsilon
	\,,\qquad 
	S_i^\varepsilon\in\big\{
	S_k \,:\, k\geq \max\{i, \ell\}
	\big\}
	\]
	taken from the sequence $(S_l)$ and there is some $M \in \NN$ such that for each $m \geq M$, the set $T_m$ can
	be $\varepsilon$-quasi tiled by the prototiles $S_i^{\varepsilon}$ with center sets $C_i^m$ for 
	$1 \leq i \leq N(\varepsilon)$ and
	\[
	\frac{w(T_m,y) }{m_G(T_m)} 
	\leq \frac{1}{m_G(T_m)} 
	\sum_{i=1}^{N(\varepsilon)} \sum_{c \in C_i^m} w(S_i^{\varepsilon}c, y) + \big(8 \eta  + 2 \theta\big)\,  \varepsilon
	\]
	for all $y \in X$. Further, combining the previous estimate 
	and property~(W4) of a weight function gives
	\begin{eqnarray*}
		\frac{w(T_m h,x)}{m_G(T_m)}
		&\leq & \sum_{i=1}^{N(\varepsilon)}\sum_{c\in C_i^m} \frac{m_G(S_i^\varepsilon c)}{m_G(T_m)}\,
		\frac{w(S_i^\varepsilon c h,x)}{m_G(S_i^\varepsilon c)}
		+ \vartheta \sum_{i=1}^{N(\varepsilon)} \sum_{c \in C_i^m} \frac{m_G(\partial_I(S_i^{\varepsilon}))}{m_G(T_m)} \\
		& &+ \vartheta \frac{m_G(\partial_I(T_m))}{m_G(T_m)} + \big(8 \eta  + 2 \theta\big)\,  \varepsilon
	\end{eqnarray*}
	for all $h \in G$ and $m \geq M$. 
	Increasing $M$ if necessary and using $m_G(\partial_I(S_l)) \leq \varepsilon m_G(S_l)$ for $l\geq \ell$ together with $\sum_i \sum_c m_G(S_i^{\varepsilon}c) \leq 2m_G(T_m)$, we arrive at  
	\begin{eqnarray*}
			\frac{w(T_m h,x)}{m_G(T_m)} &\leq &
			 \sum_{i=1}^{N(\varepsilon)}\sum_{c\in C_i^m} \frac{m_G(S_i^\varepsilon c)}{m_G(T_m)}\,
			\frac{w(S_i^\varepsilon c h,x)}{m_G(S_i^\varepsilon c)}
		 + \big( 8 \eta  + 2 \theta + 3 \vartheta \big) \varepsilon
		\end{eqnarray*}
		for all $h \in G$ and $m \geq M$.
	
	\medskip
	
	Let $S^\varepsilon:=S_{i_0}^\varepsilon$ be chosen such that $w^+(S^\varepsilon,x)=\max_{1\leq i\leq N(\varepsilon)}w^+(S_i^\varepsilon,x)$. Then 
	$$
	\frac{w(S_i^\varepsilon c h,x)}{m_G(S_i^\varepsilon c)} 
	\leq w^+(S^\varepsilon,x)\,
	$$
	for all $1 \leq i \leq N(\varepsilon)$.
	In addition, the tiling properties (T1)-(T4) listed in Theorem~\ref{thm:tiling} lead to 
	$$
	(1-2\varepsilon) m_G(T_m) 
		\leq \sum_{i=1}^{N(\varepsilon)}\sum_{c\in C_i^m} m_G(\widetilde{S}_i^\varepsilon (c)) 
		\leq  \sum_{i=1}^{N(\varepsilon)}\sum_{c\in C_i^m} m_G(S_i^\varepsilon c)
		\leq \sum_{i=1}^{N(\varepsilon)} \sum_{c \in C_i^m} \frac{m_G(\widetilde{S}_i^{\varepsilon}(c))}{1-\varepsilon} 
		\leq \frac{m_G(T_m)}{1-\varepsilon} .
	$$
	Combined with the previous estimate, this yields
	$$
	\frac{w(T_m h,x)}{m_G(T_m)}
	\leq  \max\Big\{ \frac{1}{1-\varepsilon} w^+(S^\varepsilon,x),\, (1-2\varepsilon) w^+(S^\varepsilon,x) \Big\}
	+ \big( 8 \eta  + 2 \theta + 3 \vartheta \big) \varepsilon
	$$
	for all $m \geq M$ and $h \in G$. (Note that the sign of $w^{+}(S^{\varepsilon}, x)$ determines the factor in front of it.)
	Taking the supremum over $h\in G$ and the limsup in $m$, we derive
	$$
	\limsup_{m\to\infty}w^+(T_m,x)
	\leq \max\Big\{ \frac{1}{1-\varepsilon} w^+(S^\varepsilon,x),\, (1-2\varepsilon) w^+(S^\varepsilon,x) \Big\}
	+ \big( 8 \eta  + 2 \theta + 3 \vartheta \big) \varepsilon.
	$$
	We recall that by definition, we have that $S^{\varepsilon} =  S_l$ for some $l \geq \ell$. Since $\ell$
	was chosen arbitrarily but large enough 
	and since $(S_l)$ is a subsequence of $(T_m)$, where $(w^{+}(T_m, x))_m$ attains 
	its limit inferior, we can send $\ell$ to infinity in order to see 
	\begin{eqnarray} \label{eqn:sl}
		\limsup_{m\to\infty}w^+(T_m,x)
		&\leq& \max\Big\{ \frac{1}{1-\varepsilon} \lim_{l\to\infty} w^+(S_l,x),\, (1-2\varepsilon)\lim_{l\to\infty} w^+(S_l,x) \Big\} \nonumber \\
		&& \quad
		+\big( 8 \eta  + 2 \theta + 3 \vartheta \big) \varepsilon\\
		&=& \max\Big\{ \frac{1}{1-\varepsilon} \liminf_{m\to\infty} w^+(T_m,x),\, (1-2\varepsilon) \liminf_{m\to\infty} w^+(T_m,x)\Big\} \nonumber \\
		 && \quad + \big( 8 \eta  + 2 \theta + 3 \vartheta \big) \varepsilon. \nonumber
	\end{eqnarray}
	Sending $\varepsilon \to 0$ gives what was claimed above.
	
	\medskip

	We show next that the limits do not depend on the choice of the F{\o}lner sequence. To this end, fix two nested F{\o}lner sequences $(T_m)$ and $(T^{\prime}_l)$. Then for each $\varepsilon < 1/10$, each $\ell \in \NN$,
	and large $m$, the set $T_m$ can be $\varepsilon$-quasi tiled by the same means as above by prototiles taken from $(T^{\prime}_l)_{l \geq \ell}$. Repeating exactly the same arguments as before leads to the 
	Inequality~\eqref{eqn:sl} with $(S_l)$ replaced by $T^{\prime}_{l}$. Sending $\varepsilon \to 0$ shows that the limit with respect to one sequence is less or equal than the limit with respect to the 
	other sequence. By symmetry, the independence follows. 
	
	\medskip

	If $w$ is even  a topological weight function, $(T_m)$ is a strong F{\o}lner exhaustion sequence and if the action $G \curvearrowright X$ is additionally minimal, then the limits $\lim_{m \to \infty} {w^{+}}(T_m, x)$ 
	must  coincide for all $x \in X$ and the convergence must be uniform by Lemma~\ref{lemma:minimality}. 
\end{proof}

\section{Topology of weighted Delone sets}\label{AppendixB}
This section is devoted to a particular useful neighborhood basis for the weak-$*$-topology on spaces of weighted Delone sets.

\medskip

We fix an lcsc group $G$, an open subset $U \subset G$, compact subset $K \subset G$ and $\sigma\geq 1$. We then consider weighted Delone sets in 
$\operatorname{Del}(U,K,\sigma)$, i.e.\ subsets of $G$ which are left-$U$-uniformly discrete and left-$K$-syndetic with weights in the interval $[\sigma^{-1}, \sigma]$.
 
\medskip

Let $S \in  \mathcal{RK}(G)$. Recall the notion
\begin{eqnarray*}
d_S\big( \Lambda, \Pi \big) &:=& \inf\big\{ \delta > 0:\, 
|\delta_\Lambda\big(B_{\delta}(y)\big) - \delta_\Pi\big(B_{\delta}(y)\big)| < \delta \mbox{ for all } y \in (P \cap S) \cup (Q \cap S)  \}.
\end{eqnarray*}
for two weighted Delone sets $\Lambda = (P,\alpha)$ and $\Pi = (Q,\beta)$.

\begin{proposition} \label{prop:neighborhood}
	Let $\Lambda$ be a weighted Delone set in $G$. Then for all $S \in \mathcal{RK}(G)$ and $\delta > 0$, the set 
	\[
	\mathcal{U}_{S,\delta}(\Lambda) := \big\{ \Pi\in \operatorname{Del}(U,K,\sigma) :\, d_S(\Lambda, \Pi) < \delta \big\}
	\]
	is a weak-$*$-neighborhood of $\Lambda$. Moreover, $\{ \mathcal{U}_{S,\delta}(\Lambda): S \in \mathcal{RK}(G),\,\delta > 0  \}$  defines a weak-$*$-neighborhood basis
	of $\Lambda$.
\end{proposition}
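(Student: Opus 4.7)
I need to verify that $\{\mathcal{U}_{S,\delta}(\Lambda)\}_{S,\delta}$ is a weak-$*$ neighborhood basis of $\Lambda = (P, \alpha)$ in $\operatorname{Del}(U,K,\sigma)$. This amounts to two inclusions: first, each $\mathcal{U}_{S,\delta}(\Lambda)$ contains some basic weak-$*$ open neighborhood of $\Lambda$; second, every basic weak-$*$ neighborhood $V = \{\Pi : |\delta_\Lambda(f_j) - \delta_\Pi(f_j)| < \varepsilon, \ j=1,\dots,k\}$ with $f_j \in C_c(G)$ contains some $\mathcal{U}_{S,\delta}(\Lambda)$. The essential input throughout is that $U$-uniform discreteness provides a uniform upper bound $N = N(S, U)$ for $\#(Q \cap S)$, valid for every $(Q, \beta) \in \operatorname{Del}(U,K,\sigma)$; this finiteness and its uniformity reduce weak-$*$ control to behaviour at finitely many points.

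For the first inclusion, I would enumerate $P \cap S = \{y_1, \dots, y_n\}$ and fix $\delta' \in (0, \delta/2)$ so small that the balls $B_{2\delta'}(y_i)$ are pairwise disjoint and each is contained in $y_iU$, so that every $U$-uniformly discrete set meets each such ball in at most one point. I would pick Urysohn functions $\varphi_i^-, \varphi_i^+ \in C_c(G)$ sandwiching $\mathbf{1}_{B_{\delta'}(y_i)}$ between $\mathbf{1}_{B_{\delta'/2}(y_i)}$ and $\mathbf{1}_{B_{3\delta'/2}(y_i)}$, together with a cutoff $\psi \in C_c(G)$ with $\psi \geq \mathbf{1}_S$ and support in a slight enlargement $S^+$ of $S$. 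The candidate basic neighborhood is the finite intersection of the sets $\{\Pi : |\delta_\Lambda(\varphi_i^\pm) - \delta_\Pi(\varphi_i^\pm)| < \varepsilon\}$ and $\{\Pi : |\delta_\Lambda(\psi) - \delta_\Pi(\psi)| < \varepsilon\}$ for sufficiently small $\varepsilon$, say $\varepsilon < \min\{\sigma^{-1}/2, \delta/2\}$. For $\Pi$ in this set, the sandwich $\delta_\Pi(\varphi_i^-) \leq \delta_\Pi(B_{\delta'}(y_i)) \leq \delta_\Pi(\varphi_i^+)$ yields $|\delta_\Pi(B_{\delta'}(y_i)) - \alpha(y_i)| < \varepsilon$, and forces a unique point of $Q$ in $B_{\delta'}(y_i)$ of weight within $\varepsilon$ of $\alpha(y_i)$; the $\psi$-condition ensures no further points of $Q$ lie in $S$ away from the $y_i$, so that $d_S(\Lambda, \Pi) < \delta$ follows.

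For the second inclusion I would choose $S$ whose interior contains $\bigcup_j \operatorname{supp} f_j$ with an extra buffer, let $N$ be the uniform bound for $\#(Q' \cap S)$, and pick $\delta$ smaller than $\sigma^{-1}/3$, smaller than the buffer width, smaller than a third of the minimal distance between distinct points of $P \cap S$, and small enough that uniform continuity of each $f_j$ gives $|f_j(x) - f_j(x')| < \varepsilon / (4 \sigma N)$ whenever $d(x,x') < \delta$. For $\Pi = (Q, \beta)$ with $d_S(\Lambda, \Pi) < \delta$, the ball estimate produces a bijection $y \leftrightarrow q(y)$ between points of $P$ near $\bigcup_j \operatorname{supp} f_j$ and the corresponding points of $Q$, with $d(y, q(y)) < \delta$ and $|\alpha(y) - \beta(q(y))| < \delta$; telescoping $\delta_\Lambda(f_j) - \delta_\Pi(f_j)$ across this bijection and invoking uniform continuity then yields $|\delta_\Lambda(f_j) - \delta_\Pi(f_j)| < \varepsilon$. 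The technical crux in both directions is the control of points near the boundary of $S$: a point of $Q$ inside $S$ might pair with a point of $P$ just outside $S$, and vice versa. The cutoff $\psi$ and the buffer region are designed precisely to absorb such boundary effects, and the main obstacle is verifying that the matching produced from $d_S(\Lambda, \Pi) < \delta$ is truly a bijection, in a slight enlargement of $S$, rather than merely an injection in one direction.
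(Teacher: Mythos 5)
Your overall strategy --- building a basic weak-$*$ neighborhood from finitely many Urysohn test functions for one inclusion, and for the other inclusion choosing $S$ and $\delta$ using a finiteness bound from uniform discreteness and then telescoping across the matching --- is the same strategy as the paper's. However, the first inclusion as you have set it up has a genuine gap. Your cutoff $\psi \geq \mathbf{1}_S$ supported in an enlargement $S^+$ only controls the \emph{total} mass $\delta_\Pi(\psi)$, and this total can be insensitive to the presence of an extra point $q^* \in Q \cap S$ lying far from all of $P$: if, at the same time, $P$ has a point $p$ in the annulus $S^+ \setminus S$ whose analogue is absent from $Q$, the positive contribution $\beta(q^*)\psi(q^*)$ can cancel against the lost contribution $\alpha(p)\psi(p)$, so $|\delta_\Lambda(\psi) - \delta_\Pi(\psi)|$ can remain below $\varepsilon$ while $d_S(\Lambda, \Pi) \geq \sigma^{-1}$. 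The $\varphi_i^\pm$ conditions do not help here, since they only constrain $Q$ inside small balls around $P \cap S$, and they place no constraint near $p$.

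The paper's fix is to replace your $\psi$ by a Urysohn function $\phi$ with a different normalization: $\phi$ is chosen to \emph{vanish on $P$} and to equal $1$ on the set $K := S' \setminus \bigcup_{x \in P \cap S'} B_\delta(x)$, where $S' = S.B_1$ is a uniform enlargement. Since $\delta_\Lambda(\phi) = 0$ by construction, the single condition $|\delta_\Pi(\phi)| < \delta < \sigma^{-1}$ directly forces $Q \cap S' \subseteq \bigcup_{x \in P \cap S'} B_\delta(x)$: any extra point of $Q$ in $K$ would contribute at least $\sigma^{-1}$, with nothing to cancel against. Only after this localization does the paper deploy the individual bump functions $\phi_x$ (your $\varphi_i^+$ suffices, and the sandwich $\varphi_i^\pm$ is not needed) to pin down weights. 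For the second inclusion your outline is essentially correct; the paper makes the bijectivity argument rigorous by choosing $S$ so that $S \cap P = S.B_{2\varepsilon} \cap P$ (no points of $P$ in the annulus) and then showing that the injection $x \mapsto y_x$ from $P \cap S.B_\varepsilon$ into $Q \cap S.B_\varepsilon$ is onto, because any unmatched $y \in Q \cap S.B_\varepsilon$ would have $\delta_\Lambda(B_\delta(y)) = 0$ and hence $\beta(y) < \delta < \sigma^{-1}$, contradicting $\Pi \in \operatorname{Del}(U,K,\sigma)$. You flagged this bijectivity as the main obstacle without supplying the resolution; the contradiction above is the missing step.
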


\begin{proof}
Let $0 < \delta < \sigma^{-1}/2$, $S\in \mathcal{RK}(G)$ and $\Lambda=(P,\alpha)\in \operatorname{Del}(U,K,\sigma)$. Define $S':=S.B_1\in\mathcal{RK}(G)$. Since $\sigma\geq 1$, $B_{2\delta}(x)\subseteq S'$  holds for all $x\in S$. 
There is no loss in generality in assuming that $e\in U$ and $\delta>0$ is small enough such that  $B_{2\delta}(x) \cap B_{2\delta}(y)=\emptyset$ for any $x,y\in D$ where $D$ is some $U$-uniformly discrete set.

\medskip

Set $K:=S'\setminus \bigcup_{x\in P\cap S'} B_{\delta}(x)$. By Urysohn's lemma, there is a $\phi\in\Cc_c(G)$ such that $0\leq \phi\leq 1$, $\phi(x)=0$ for all $x\in P$ and $\phi|_K = 1$. 
Furthermore, for each $x\in P\cap S'$, there are $\phi_x\in \Cc_c(G)$ such that $0\leq \phi_x\leq 1$, $\phi_x$ is supported in $B_{2\delta}(x)$ and $\phi_x|_{B_\delta(x)}\equiv 1$.
Then the set
$$
\mathcal{V} := 
	\big\{ 
		\Pi\in \operatorname{Del}(U,K,\sigma) :\, 
		|\delta_{\Lambda(}\psi)-\delta_{\Pi}(\psi)|<\delta \mbox{ for all } \psi\in\{\phi\}\cup\{ \phi_x :\, x\in P\cap S'\}
	\}
$$
is a weak-$*$-neighborhood of $\Lambda$ in $\operatorname{Del}(U,K,\sigma)$. We will show first that $\mathcal{V}\subseteq\mathcal{U}_{S,\delta}(\Lambda)$
showing that $\mathcal{U}_{S,\delta}(\Lambda)$ is indeed a weak-$\ast$-neighborhood of $\Lambda$.

\medskip

Let $\Pi=(Q,\beta)\in\mathcal{V}$. By definition of $\phi$, we have $\delta_{\Lambda}(\phi)=0$. Thus, $|\delta_{\Pi}(\phi)| = |\delta_{\Lambda}(\phi)-\delta_{\Pi}(\phi)|<\delta$ implies 
$
Q\cap S' \subseteq \bigcup_{x\in P\cap S'} B_\delta(x)
$
as $\delta<\sigma^{-1}$ and $\phi_{|K} =1 $.

\medskip

By the choice of $\phi_x$, we have $\delta_{\Lambda}(\phi_x)=\alpha(x)$ for all $x \in P \cap S^{\prime}$. 
In particular, for each $x\in P\cap S$, $|\delta_{\Lambda}(\phi_x)-\delta_{\Pi}(\phi_x)|<\delta$ and the condition
$B_{2\delta}(x) \cap B_{2\delta}(y) = \emptyset$ for $x,y \in Q$ with $x \neq y$ implies that there is a unique $z_x\in Q\cap B_{2\delta}(x) \subseteq Q\cap S'$. 
Since $Q\cap S' \subseteq \bigcup_{x\in P\cap S'} B_\delta(x)$, we conclude $z_x\in Q\cap B_{\delta}(x)$.
Furthermore, $\phi_x|_{B_\delta(x)}\equiv 1$ yields $\phi_x(x) = 1 = \phi_x(z_x)$. Thus,
$$
\big|\delta_{\Lambda}\big(B_{\delta}(x)\big)-\delta_{\Pi}\big(B_{\delta}(x)\big)\big| 
	= \big|\phi_x(x)\alpha(x) - \phi_x(z_x)\beta(z_x)\big|
	= \big|\delta_{\Lambda}\big(\phi_x\big)-\delta_{\Pi}\big(\phi_x\big)\big| 
	< \delta
$$
holds for each $x\in P\cap S$. If $y\in Q\cap S$ there is a unique $x\in P\cap S'$ such that $y \in B_{\delta}(x)$.
Thus, $\delta_{\Lambda}(B_{\delta}(x)) = \delta_{\Lambda}(B_{\delta}(y))$ and $\delta_{\Pi}(B_{\delta}(x)) = \delta_{\Pi}(B_{\delta}(y))$ for these choices of $x$ and $y$.
Hence,
\[
 \big|\delta_{\Lambda}\big(B_{\delta}(y)\big)-\delta_{\Pi}\big(B_{\delta}(y)\big)\big| = \big|\delta_{\Lambda}\big(B_{\delta}(x)\big)-\delta_{\Pi}\big(B_{\delta}(x)\big)\big| = \big| \delta_{\Lambda}(\phi_x) - \delta_{\Pi}(\phi_x) \big| < \delta 
\]
for all $y \in Q \cap S$ and the unique $x \in P \cap S^{\prime}$ with $y \in B_{\delta}(x)$. 
 Consequently, we have proven that $\Pi\in\mathcal{V}$ implies $d_S(\Lambda,\Pi)< \delta$. Thus, $\mathcal{U}_{S,\delta}(\Lambda)$ is a weak-$\ast$-neighborhood of $\Lambda$.

\medskip

Let $\Lambda=(P,\alpha)\in \operatorname{Del}(U,K,\sigma)$, $\phi \in\Cc_c(G)$ and $\varepsilon>0$. In order to show that $\{ \mathcal{U}_{S,\delta}(\Lambda): S \in \mathcal{RK}(G),\, \delta > 0 \} $  
defines a neighborhood basis, it suffices to show that there is an $S\in\mathcal{RK}(G)$ and a $\delta >0$ such that 
$$
\mathcal{U}_{S,\delta}(\Lambda)\subseteq \{\Pi\in \operatorname{Del}(U,K,\sigma) :\, |\delta_{\Lambda}(\phi)-\delta_{\Pi}(\phi)|<\varepsilon\}.
$$
Without loss of generality suppose that $e\in U$ and $B_{2\varepsilon}(x) \cap B_{2\varepsilon}(y) = \emptyset$ for all $x,y \in D$ with $x \neq y$ and for all 
$U$-uniformly discrete set $D$. 
Let $S\in\mathcal{RK}(G)$ be chosen such that it contains the compact support of $\phi$ and $S\cap P = S.B_{2\varepsilon} \cap P$. Set $N:=\sharp (S.B_\varepsilon\cap P)$. Since $\phi \in\Cc_c(G)$, 
there is an  $\delta>0$ such that $\delta<\min\big\{\varepsilon,\sigma^{-1},\frac{\varepsilon}{2N\|\phi\|_{\infty}}\big\}$ and $|\phi(x)-\phi(y)|<\frac{\varepsilon}{2N\sigma}$ holds for all $x,y\in G$ with $d_G(x,y)<\delta$.

\medskip

Let $\Pi=(Q,\beta)\in\mathcal{U}_{S.B_\varepsilon,\delta}(\Lambda)$. By the choice of $S$ each point $x \in P \cap S.B_{\varepsilon}$ is necessarily
contained in $P \cap S$. Hence for each $x\in P\cap S.B_\varepsilon = P \cap S$, the condition $|\delta_{\Lambda}\big(B_\delta(x)\big) - \delta_{\Pi}\big(B_\delta(x)\big)|<\delta\leq \varepsilon$ 
implies that there is a unique $y_x\in Q\cap S.B_{\varepsilon}\cap B_\delta(x)$ such that $|\alpha(x)-\beta(y_x)|<\delta \leq \frac{\varepsilon}{2N\| \phi \|_{\infty}}$. 
In particular, we have shown that $\{y_x\,: x\in P\cap S.B_\varepsilon\} \subseteq Q\cap S.B_\varepsilon$. We claim the latter two sets are actually equal.  

For indeed, if there was an $y\in Q\cap S.B_\varepsilon$ that is not equal to some $y_x$, then $B_\delta(y)\cap P\subseteq S.B_{2\varepsilon}\cap P$ and this would lead to $B_\varepsilon(y)\cap P=\emptyset$. Thus, 
$$|\beta(y)| = |\delta_{\Lambda}(B_\delta(y))-\delta_{\Pi}(B_\delta(y))| <\delta< \sigma^{-1}
$$ 
as $\Pi\in \mathcal{U}_{S.B_\varepsilon,\delta}(\Lambda)$, a contradiction to $\Pi\in \operatorname{Del}(U,K,\sigma)$. Furthermore, $\Pi\in \mathcal{U}_{S.B_\varepsilon,\delta}(\Lambda)$ yields
$$
|\beta(y_x)-\alpha(x)| = \big|\delta_{\Lambda}\big(B_\delta(y_x)\big) - \delta_{\Pi}\big(B_\delta(y_x)\big)\big| <\delta
$$
for all $x \in P \cap S.B_{\varepsilon}$.
Thus,
\begin{align*}
|\delta_{\Lambda}(\phi)-\delta_{\Pi}(\phi)| \leq &\sum_{x\in P\cap SB_\varepsilon} |\alpha(x) \phi(x) - \beta(y_x) \phi(y_x)|\\
	\leq &\sum_{x\in P\cap S.B_\varepsilon} \Big(|\alpha(x) - \beta(y_x)|\, |\phi(x)| + |\beta(y_x)|\, |\phi(x) - \phi(y_x)|\Big)
	<\varepsilon,
\end{align*}
using $d_G(x,y_x)<\delta$.
\end{proof}

\bibliographystyle{amsalpha}
\bibliography{references}
\end{document}